\title{Competition between stable equilibria in reaction-diffusion systems: the influence of mobility on dominance}
\author{Emmanuel \textsc{Risler}\footnote{\protect\url{\myWebPage}}}
\affil{\myAffiliation}
\begin{document}

\maketitle
\thispagestyle{empty}

\begin{abstract}
This paper is concerned with reaction-diffusion systems of two symmetric species in spatial dimension one, having two stable symmetric equilibria connected by a symmetric standing front. The first order variation of the speed of this front when the symmetry is broken through a small perturbation of the diffusion coefficients is computed. This elementary computation relates to the question, arising from population dynamics, of the influence of mobility on dominance, in reaction-diffusion systems modelling the interaction of two competing species. It is applied to two examples. First a toy example, where it is shown that, depending on the value of a parameter, an increase of the mobility of one of the species may be either advantageous or disadvantageous for this species. Then the Lotka--Volterra competition model, in the bistable regime close to the onset of bistability, where it is shown that an increase of mobility is advantageous. Geometric interpretations of these results are given. 
\end{abstract}

\section{Introduction}
Reaction-diffusion systems play an important role as models for a large variety of spatio-temporal systems arising from various fields: Chemistry, Physics, Mechanics, Genetics, Ecology\dots. An relevant concept for the understanding of their dynamical behaviour is the dominance of equilibria, \cite{Fife_reactDiffSyst_1979}: given two stable homogeneous equilibria, in which sense can one say that an equilibrium ``dominates'' the other one~? A possible answer is (see for instance \cite{HutsonVickers_travWaveDom_1992}): equilibrium $A$ dominates equilibrium $B$ if there exists a travelling front connecting these two equilibria and displaying invasion of $B$ by $A$ (even if the order relation induced by this definition is not always antisymmetric, see the example in appendix, \vref{subsec:ex_2_eq}). 

A natural related question is that of the influence of mobility on dominance: how is the speed of a front connecting two stable equilibria (and in particular the sign of this speed) affected by a change in the diffusion coefficients~? This question is relevant in the context of population dynamics. Consider a system modelling the evolution of densities of two species competing in a one-dimensional environment. In this case one expects the existence of two stable equilibria, each corresponding to the dominance of a species, for the local reaction system. The question above is that of the influence of the mobility of each of the two species on their relative fitness, that is on the sign of the speed of a front connecting these equilibria. 

One may believe that less mobility is always advantageous, since it reduces the dispersal at the interface where the two species coexist, and thus prevents invasion (see the observations in \cite{HutsonVickers_travWaveDom_1992}). But other effects can be invoked: an increase in the mobility of, say, the first species, changes the total density of individuals on each side of the interface, and results in undercrowding on the side where first species dominates and overcrowding on the other side, an effect having unclear consequences. While according to results of A. Hastings \cite{Hastings_canSpatialVariationAloneLeadToSelectionForDispersal_1983} and J. Dockery et al. \cite{DockeryHutson_evolutionSlowDispersalRates_1998} a heterogeneous environment seems to be always in favour of a reduction of dispersal, V. Hutson and G. T. Vickers made on a model the numerical observation that large or small diffusion cannot unambiguously be claimed to be favourable in general, \cite{HutsonVickers_titForTatDefect_1995}. More recently, L. Girardin and G. Nadin considered a Lotka--Volterra competition model close to the infinite competition limit and proved in this case a ``Unity is not strength'' result stating that a large dispersal is favoured, \cite{GirardinNadin_tWRelativeMotilityInvasionSpeed_2015}.

The aim of this paper is to examine on some cases the value of the first order dependence of the speed of a bistable front with respect to a perturbation of the diffusion matrix, and to try to determine the sign of this quantity. First we consider a general reaction-diffusion system in spatial dimension one, governing two symmetric scalar components, and assume the existence of two stable homogeneous equilibria that are symmetric (with respect to exchange of the two components) and connected by a symmetric (thus stationary) front. Then the symmetry between the two scalar components is broken by a small perturbation of the diffusion matrix (say a small increase of the diffusion coefficient of the first component) and several expressions are provided for the first order dependence of the speed of the front with respect to this perturbation (\cref{sec:general}), together with a geometric interpretation for some of these expressions. All this suggests that both signs may occur for this first order dependence, depending on the features of the initial system.

Two specific examples are then considered. First (\cref{sec:toy}) a toy example where the initial standing front is explicit, and where it is shown that both signs (for the first order dependence introduced above) actually occur, depending on the value of a parameter of the system. This confirms on a computable case the aforementioned observations of Hutson and Vickers. The second example (treated in \cref{sec:lv}) is the Lotka--Volterra competition model in the bistable regime, close to the onset of bistability. Using singular perturbation arguments, it is shown in this case that a large dispersal is advantageous. 
\section{Assumptions, notation, perturbation scheme}
\label{sec:general}
\subsection{Setup}
\label{subsec:setup}
Let us consider the reaction-diffusion system:
\begin{equation}
\label{react_diff}
u_t = F(u)+\ddd u_{xx}
\end{equation}
where the time variable $t$ and the space variable $x$ are real, space domain is the full real line, the field variable $u$ is $n$-dimensional ($n$ is a positive integer), the ``reaction'' function $F:\rr^n\rightarrow\rr^n$ is smooth, and the ``diffusion'' matrix $\ddd$ is a positive definite symmetric $n\times n$ real matrix. 
Let us assume that this system admits two distinct spatially homogeneous equilibria, in other words that there exist two points $E_-$ and $E_+$ in $\rr^n$ such that 
\[
E_-\not= E_+ 
\quad\mbox{and}\quad
F(E_-)=F(E_+)=0
\]
and let us assume that there exists a travelling front connecting these two equilibria, in other words that there exist a smooth function $\phi:\rr\rightarrow\rr^n$ and a real quantity $c$ such that the function $(x,t)\mapsto\phi(x-c t)$ is a solution of system \cref{react_diff} and such that
\[
\phi(\xi)\rightarrow E_-
\quad\mbox{when}\quad
\xi\rightarrow-\infty
\quad\mbox{and}\quad
\phi(\xi)\rightarrow E_+
\quad\mbox{when}\quad
\xi\rightarrow+\infty
\,.
\]
This function $\phi$ is a global solution of the system:
\begin{equation}
\label{syst_front}
-c\phi'(\xi)=F\bigl(\phi(\xi)\bigr)+\ddd\phi''(\xi)
\,.
\end{equation}
Let us assume in addition that both equilibria $E_-$ and $E_+$ are hyperbolic (that is the linear functions $DF_{E_+}$ and $DF_{E_-}$ have no eigenvalue with zero real part). In this case both functions $\xi\mapsto F\bigl(\phi(\xi)\bigr)$ and $\xi\mapsto \phi'(\xi)$ approach $0_{\rr^n}$ at an exponential rate when $\xi$ approaches $\pm\infty$, and as a consequence these functions belong to the space $L^2(\rr,\rr^n)$. 
Let us denote by `` $\cdot$ '' the canonical scalar product in $\rr^n$, and let
$\langle \cdot , \cdot \rangle_{L^2(\rr,\rr^n)}$ and $\norm{\cdot}_{L^2(\rr,\rr^n)}$ denote the usual scalar product and corresponding norm in $L^2(\rr,\rr^n)$, namely (for every pair $(f,g)$ of functions of $L^2(\rr,\rr^n)$):
\[
\langle f , g \rangle_{L^2(\rr,\rr^n)} = \int_{-\infty}^{+\infty} f(x) \cdot g(x) \, dx
\quad\mbox{and}\quad
\norm{f}_{L^2(\rr,\rr^n)} = \sqrt{\langle f , f \rangle_{L^2(\rr,\rr^n)}}
\,.
\]
Now, it follows from system \cref{syst_front} that the quantity $c$ admits the following explicit expression:
\begin{equation}
\label{expr_c0}
c 
= -\frac{\int_{-\infty}^{+\infty} F\bigl(\phi(\xi)\bigr)\cdot\phi'(\xi)\,d\xi} {\int_{-\infty}^{+\infty}\phi'^2(\xi)\,d\xi}
= -\frac{\langle F(\phi) , \phi' \rangle_{L^2(\rr,\rr^n)}}{\norm{\phi'}_{L^2(\rr,\rr^n)}^2}
\,.
\end{equation}

If the reaction function $F$ derives from a potential $V:\rr^n\rightarrow\rr$ (namely if $F(u)=-\nabla V (u)$ for all $u$ in $\rr^n$) then this expression of $c$ becomes:
\begin{equation}
\label{expr_c0_pot}
c=\frac{V(E_+)-V(E_-)}{\norm{\phi'}_{L^2(\rr,\rr^n)}^2}
\,. 
\end{equation}
Thus, in this case, the sign of the speed $c$ of the front only depends on the sign of the difference between $V(E_+)$ and $V(E_-)$. In particular, if there exist several 
travelling fronts connecting $E_-$ to $E_+$, then all the velocities of these fronts have the same sign. Such is not always the case when $F$ does not derive from a potential: it is not difficult to construct an example of system of the form \cref{react_diff} where two distinct equilibria are connected by two travelling fronts with velocities of opposite signs (for sake of completeness such an example is given in appendix, see \vref{subsec:ex_2_eq}).

In the following we shall not assume that $F$ derives from a potential. Our aim is to understand the influence of a small change in the diffusion matrix $\ddd$ on the speed $c$ of the travelling front $\phi$. 
\subsection{Stability and transversality assumptions}
\label{subsec:transv_assump}
Let us introduce the space coordinate $\xi=x-ct$ in a frame travelling at speed $c$. If two functions $u(x,t)$ and $v(\xi,t)$ are related by:
\[
u(x,t) = v(\xi,t)= v(x-ct,t)
\,,
\]
then $u$ is a solution of system \cref{react_diff} if and only if $v$ is a solution of:
\begin{equation}
\label{react_diff_trav_frame}
v_t = c v_\xi + F(v) + \ddd v_{\xi\xi}
\,,
\end{equation}
which represents system \cref{react_diff} rewritten in the $(\xi,t)$ coordinates system. The profile $\xi\mapsto \phi(\xi)$ of the travelling front considered in \cref{subsec:setup} is a steady state of system \cref{react_diff_trav_frame}.
A small perturbation 
\[
(\xi,t)\mapsto \phi(\xi) + \varepsilon v(\xi,t)
\]
of the profile of the front is (at first order in $\varepsilon$) a solution of \cref{react_diff_trav_frame} if and only if $v$ is a solution of the linearised system:
\begin{equation}
\label{react_diff_trav_frame_lin}
v_t = c v_\xi + DF(\phi) v + \ddd v_{\xi\xi}
\,.
\end{equation}
The right-hand side of \cref{react_diff_trav_frame_lin} defines the differential operator
\begin{equation}
\label{def_linearised_operator}
\mathcal{L}:c\partial_\xi+ DF(\phi) + \ddd\partial_{\xi\xi}
\,.
\end{equation}
Considered as an unbounded operator in $L^2(\rr,\rr^n)$, it is a closed operator with dense domain $H^2(\rr,\rr^n)$. 
Due to translation invariance in the space variable $x$, zero is an eigenvalue of this operator; indeed, differentiating system~\cref{syst_front} yields: 
\[
\mathcal{L}\phi'=0
\,.
\]
Let us make the following hypotheses. 
\begin{description}
\item[\hypStabInfty] The spatially homogeneous equilibria $E_-$ and $E_+$ at both ends of the front are spectrally stable for the reaction-diffusion system \cref{react_diff}.
\end{description}
In other words, For every real quantity $k$, all eigenvalues of the $n\times n$ real matrices 
\[
DF(E_-)-k^2\ddd
\quad\mbox{and}\quad
DF(E_+)-k^2\ddd
\]
have negative real parts (the subscript ``stab-ends'' refers to: ``stable at both ends of space''). Equivalently, the essential spectrum of operator $\mathcal{L}$ is stable \cite{Henry_geomSemilinParab_1981,Sandstede_stabilityTW_2002}. 
\begin{description}
\item[\hypTransv] The eigenvalue zero of the operator $\mathcal{L}$ has an algebraic multiplicity equal to $1$.
\end{description}
In other words, the kernel of operator $\mathcal{L}$ is reduced to $\spanset(\phi')$, and the function $\phi'$ does not belong to $\imm(\mathcal{L})$. The subscript ``transv'' refers to: ``transverse''; indeed, this hypothesis is equivalent to the transversality of the travelling front (see \vref{lem:alg_mult_transv}). 

The two next definitions call upon a topology on the space of travelling fronts, which may be chosen as follows: two travelling fronts $\phi_1$ and $\phi_2$ travelling at speeds $c_1$ and $c_2$ are close if: there exists a translate of $\phi_2$ that is close to $\phi_1$ (uniformly on $\rr$), and the two speeds $c_1$ and $c_2$ are close. 
\begin{definition}[isolation and robustness of the travelling front]
The travelling front $\phi$ is said to be \emph{isolated} if there exists a neighbourhood of it such that every other travelling front of the same system \cref{react_diff} in this neighbourhood is equal to a translate of $\phi$ (and as a consequence travels at the same speed). 

The travelling front $\phi$ is said to be \emph{robust} if every sufficiently small perturbation of system \cref{react_diff} possesses a unique (up to space translation) front close to $\phi$ and travelling with a speed close to $c$. 
\end{definition}
The following statement is a rather standard transversality result \cite{CoulletRieraTresser_stableStaticLocStructOneDim_2000,Coullet_locPattFronts_2002,
Sandstede_stabilityTW_2002,HomburgSandstede_homocHeteroclinicBifVectFields_2010,
GuckenheimerKrauskopf_invManifGlobalBif_2015}. 
\begin{proposition}[isolation and robustness of $\phi$]
\label{prop:robustness}
It follows from hypotheses \hypStabInfty and \hypTransv that the travelling front $\phi$ under consideration is isolated and robust. 
\end{proposition}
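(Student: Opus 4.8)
The plan is to prove isolation and robustness by setting up a Fredholm / implicit function theorem framework in which the travelling front $\phi$, together with its speed $c$, appears as a nondegenerate zero of a suitable nonlinear operator. The natural object is the map sending a profile--speed pair $(\psi,c)$ to the residual $c\psi' + F(\psi) + \ddd\psi''$, viewed as an operator between appropriate function spaces. Because of translation invariance one quotients by the group action $a \mapsto \phi(\cdot + a)$, or equivalently imposes a phase/anchoring condition (e.g.\ $\langle \psi - \phi, \phi'\rangle_{L^2(\rr,\rr^n)} = 0$) to remove the one-dimensional kernel. Hypotheses \hypStabInfty and \hypTransv are exactly what is needed to make the linearisation invertible on the complement of this kernel.

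First I would record that hypothesis \hypStabInfty guarantees, via the standard theory of the essential spectrum of the operator $\mathcal{L}$ in \cref{def_linearised_operator} (see \cite{Henry_geomSemilinParab_1981,Sandstede_stabilityTW_2002}), that $\mathcal{L}$ is a Fredholm operator of index zero on $L^2(\rr,\rr^n)$: the asymptotic matrices $DF(E_\pm)-k^2\ddd$ being hyperbolic for all $k$ keeps $0$ out of the essential spectrum. Next I would invoke hypothesis \hypTransv, which says that the algebraic multiplicity of the eigenvalue $0$ equals $1$, i.e.\ $\ker\mathcal{L}=\spanset(\phi')$ and $\phi'\notin\imm(\mathcal{L})$. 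Combined with Fredholmness of index zero this yields the direct-sum decomposition $L^2 = \spanset(\phi')\oplus\imm(\mathcal{L})$, so that the formal adjoint $\mathcal{L}^*$ also has a one-dimensional kernel, spanned by some $\psi^*$ with $\langle\psi^*,\phi'\rangle_{L^2(\rr,\rr^n)}\neq 0$. This adjoint element is precisely the Lyapunov--Schmidt ingredient that will control the speed parameter $c$.

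Then I would introduce the augmented operator accounting for the free speed: on the space $H^2(\rr,\rr^n)\times\rr$, consider $G(\psi,c) = c\psi' + F(\psi) + \ddd\psi''$ together with the anchoring constraint, and compute its Fréchet derivative at $(\phi,c)$. The derivative in the $\psi$-direction is $\mathcal{L}$, and the derivative in the $c$-direction is the vector $\phi'$; since $\phi'$ spans a complement of $\imm(\mathcal{L})$ (this is where \hypTransv in its non-surjectivity form is used), the full augmented derivative is an isomorphism from $H^2(\rr,\rr^n)\times\rr$ onto $L^2(\rr,\rr^n)$ once the anchoring condition has absorbed the kernel direction $\phi'$. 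Isolation then follows immediately from the local uniqueness part of the implicit function theorem: any nearby front, after an appropriate translation to satisfy the phase condition, must coincide with $\phi$ and travel at speed $c$.

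For robustness I would let the system itself vary, writing the perturbed problem as $G_\lambda(\psi,c)=c\psi'+F_\lambda(\psi)+\ddd_\lambda\psi''$ with $(F_\lambda,\ddd_\lambda)$ depending smoothly on a parameter $\lambda$ and reducing to $(F,\ddd)$ at $\lambda=0$, and apply the implicit function theorem to the augmented map $(\psi,c,\lambda)\mapsto\bigl(G_\lambda(\psi,c),\langle\psi-\phi,\phi'\rangle_{L^2(\rr,\rr^n)}\bigr)$. The invertibility of the $(\psi,c)$-derivative at $(\phi,c,0)$ established above yields a unique smooth branch $(\psi_\lambda,c_\lambda)$ of fronts close to $\phi$, with speed close to $c$, which is exactly the robustness claim. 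The main obstacle I anticipate is analytic rather than algebraic: verifying that the linearisation acts cleanly between the chosen Sobolev spaces and that the asymptotic decay of $\phi'$ and $F(\phi)$ (already granted by hyperbolicity of $E_\pm$) is enough to place everything in $L^2$ and to guarantee the Fredholm property. Care is also needed at the level of the quotient by translation: one must check that the anchoring functional $\langle\cdot-\phi,\phi'\rangle_{L^2(\rr,\rr^n)}$ is transverse to the group orbit, i.e.\ that $\langle\phi',\phi'\rangle_{L^2(\rr,\rr^n)}\neq 0$, which is immediate, so this reduction is clean. With these analytic points in place the rest is a routine application of the implicit function theorem.
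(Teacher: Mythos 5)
Your proof is correct, but it takes a genuinely different route from the paper's. The paper (\vref{subsec:robustness}) argues geometrically in the phase space of the travelling-wave ODE: it rewrites the profile equation as a first-order system, appends the speed as a trivial variable ($c'=0$), shows that \hypStabInfty makes the equilibria $(E_\pm,0)$ hyperbolic with $n$-dimensional stable and unstable manifolds (\cref{lem:spatially_hyperbolic}), proves that \hypTransv is equivalent to transversality of the intersection of the resulting $(n+1)$-dimensional unstable and stable manifolds in $\rr^{2n+1}$ (\cref{lem:alg_mult_transv}), and concludes by persistence of transverse intersections under perturbations of $(F,\ddd)$. You instead work in function spaces: \hypStabInfty gives the Fredholm-of-index-zero property of $\mathcal{L}$, \hypTransv upgrades this to the splitting $L^2(\rr,\rr^n)=\spanset(\phi')\oplus\imm(\mathcal{L})$, and the implicit function theorem applied to the speed-augmented, phase-anchored map yields both isolation and robustness; your verification that the augmented linearisation $(v,\gamma)\mapsto\bigl(\mathcal{L}v+\gamma\phi',\langle v,\phi'\rangle_{L^2(\rr,\rr^n)}\bigr)$ is an isomorphism is exactly where the non-surjectivity part of \hypTransv enters, and is sound. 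What the paper's route buys is the explicit equivalence between \hypTransv and geometric transversality (\cref{lem:alg_mult_transv}, which is also referred to in \cref{subsec:transv_assump}), together with a picture in which persistence is immediate. What your route buys is that smooth dependence of the perturbed front and speed on the perturbation parameter falls out of the implicit function theorem --- precisely what \cref{subsec:solvency} needs --- and that your cokernel element $\psi^*$ is (up to normalisation) the function $\psi$ of \cref{subsec:ker_adjoint}, so the solvency condition is already set up. One caveat, present in both proofs but slightly more visible in yours: the paper's notion of closeness of fronts is uniform closeness of a translate together with closeness of speeds, whereas your implicit function theorem gives uniqueness only in an $H^2$ neighbourhood; to deduce isolation in the paper's sense you need the standard bootstrapping remark that a front of the same system that is uniformly close to $\phi$ with nearby speed converges exponentially to $E_\pm$ at uniform rates (by hyperbolicity of the rest states in the spatial ODE), and is therefore, after a suitable translation, $H^2$-close to $\phi$, so that your local-uniqueness statement applies.
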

For sake of completeness a proof of this proposition is provided in \vref{subsec:robustness}.
\subsection{Spectral stability}
\label{subsec:spec_stab_gen}
The travelling front $\phi$ is said to be
\emph{spectrally stable} if hypotheses \hypStabInfty and \hypTransv are satisfied, and if moreover every nonzero eigenvalue of $\mathcal{L}$ has a negative real part.
In this case the travelling front is then also \emph{non linearly stable with asymptotic phase}, that is for every function $u_0:\rr\rightarrow\rr^n$ sufficiently close (say uniformly on $\rr$) to a translate of $\phi$, there exists a real quantity $x_1$ such that the solution of system \cref{react_diff} with initial condition $u_0$ approaches the solution $(x,t)\mapsto\phi(x-x_1-c t)$ (at an exponential rate) when $t$ approaches $+\infty$ \cite{Henry_geomSemilinParab_1981,Sandstede_stabilityTW_2002}. 

In the two practical examples that will be considered in \cref{sec:toy} and \cref{sec:lv}, the fronts under consideration will be spectrally stable indeed. However, we shall not make any additional spectral stability hypothesis at this stage since such an hypothesis is not required for the general considerations that will be made in the next \cref{subsec:solvency,subsec:altern_expr_bar_c,subsec:with_symmetries,subsec:geom_interpret_bar_c,subsec:red_sym}. 
\subsection{Kernel of the adjoint linearised operator}
\label{subsec:ker_adjoint}
Let $\mathcal{L}^*$ denote the adjoint operator of $\mathcal{L}$ for the scalar product $\langle . , .\rangle_{L^2(\rr,\rr^n)}$, that is:
\[
\mathcal{L}^* = -c\partial_\xi+DF(\phi)^* + \ddd\partial_{\xi\xi}
\]
(see \cref{fig:notation_operators}).
\begin{figure}[!htbp]
	\centering
    \includegraphics[width=0.85\textwidth]{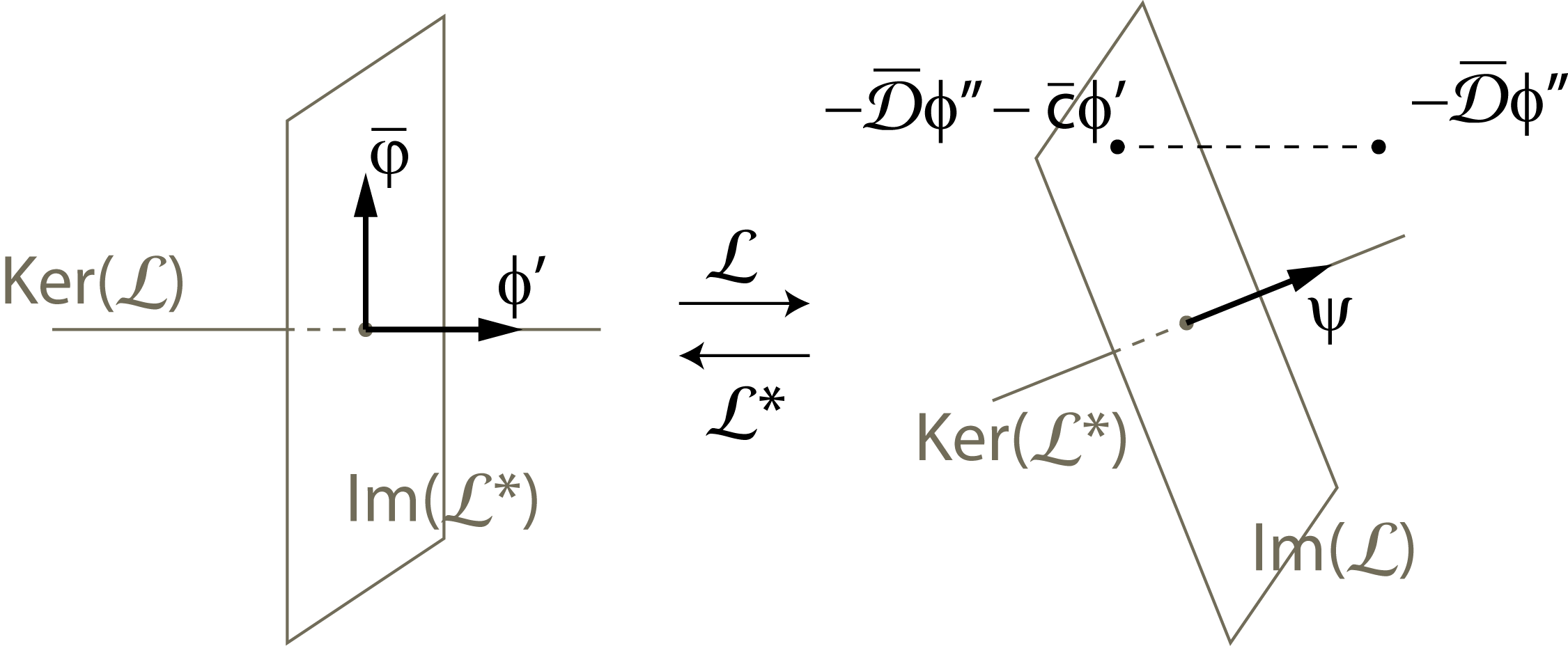}
    \caption{Notation related to the operators $\mathcal{L}$ and $\mathcal{L}^*$.}
    \label{fig:notation_operators}
\end{figure}
Hypotheses \hypStabInfty and \hypTransv ensure that $\ker(\mathcal{L}^*)$ is also one-dimensional \cite{SandstedeScheel_stabTWLargeSpatialPeriod_2001,Sandstede_stabilityTW_2002}, and according to \hypTransv the subspaces $\ker(\mathcal{L})$ and $\ker(\mathcal{L}^*)$ are not orthogonal to one another. As a consequence there exists a unique function $\psi$ in $\ker(\mathcal{L}^*)$, satisfying the normalization condition
\begin{equation}
\label{norm_cond}
\langle\psi,\phi'\rangle_{L^2(\rr,\rr^n)}=1
\,,
\end{equation}
and $\psi(x)$ approaches $0_{\rr^n}$ at an exponential rate when $x$ approaches $\pm\infty$ \cite{SandstedeScheel_stabTWLargeSpatialPeriod_2001,Sandstede_stabilityTW_2002}. 
\subsection{Perturbation of the diffusion matrix and solvency condition}
\label{subsec:solvency}
Let us consider a symmetric (not necessarily positive definite) $n\times n$ real matrix $\bar{\ddd}$, a small positive quantity $\epsilon$, and the following perturbation of system \cref{react_diff}: 
\begin{equation}
\label{r_diff_pert}
u_t=F(u)+(\ddd+\epsilon\bar{\ddd})u_{xx}
\,, 
\end{equation}
According to the consequences of hypotheses \hypStabInfty and \hypTransv mentioned in \cref{subsec:transv_assump}, if $\epsilon$ is sufficiently small, then the perturbed system~\cref{r_diff_pert} admits a unique travelling front close to $\phi$, having a speed close to $c$, and those depend smoothly on $\epsilon$. 
If we denote by $\phi+\epsilon\varphi$ this travelling front and by $c+\epsilon \bar{c}$ its speed, then, replacing these two ansatzes into system \cref{r_diff_pert}, we find that, at first order in $\epsilon$, the function $\varphi$ and the quantity $\bar{c}$ must satisfy the system
\begin{equation}
\label{1st_ord_front}
\mathcal{L}\varphi=-\bar{\ddd}\phi''- \bar{c} \phi'
\,. 
\end{equation}
Taking on both sides the scalar product by $\psi$, it follows that:
\begin{equation}
\label{solv_cond}
\bar c=-\langle\psi,\bar{\ddd}\phi''\rangle_{L^2(\rr,\rr^n)}
\,.
\end{equation}
This is a solvency condition that ensures that $-\bar{\ddd}\phi''-\bar c\phi'$ is orthogonal to the kernel of $\mathcal{L}^*$, thus equivalently that it belongs to the image of $\mathcal{L}$ (see \cref{fig:notation_operators}). 

The main purpose of this paper is to investigate the sign of the quantity $\bar{c}$, since it is this sign that determines how the perturbation in \cref{r_diff_pert} balances the relative dominance of the two equilibria $E_-$ and $E_+$, through the speed of the travelling front $\phi$. Indeed, 
\begin{itemize}
\item if $\bar{c}$ is positive, then, for $\epsilon$ small positive, the influence of the perturbation will be to increase the speed of the front, thus to promote $E_-$ with respect to $E_+$;
\item if conversely $\bar{c}$ is negative, then again for $\epsilon$ small positive, the influence of the perturbation will be to decrease the speed of the front, thus to promote $E_+$ with respect to $E_-$.
\end{itemize}
\subsection{Alternative expression for the first order variation of the speed}
\label{subsec:altern_expr_bar_c}
We are now going to provide a second expression of $\bar{c}$ that will turn out to be useful, and in particular easier to interpret than the solvency condition \cref{solv_cond}.
Since the function $-\bar{\ddd}\phi''-\bar c\phi'$ belongs to the image of $\mathcal{L}$, system \cref{1st_ord_front} admits exactly one solution $x\mapsto\bar\varphi(x)$ satisfying 
\[
\langle \phi',\bar\varphi\rangle_{L^2(\rr,\rr^n)} = 0
\]
(see \cref{fig:notation_operators}).
Taking the scalar product by $\phi'$ in system \cref{1st_ord_front} and integrating over $\rr$, we get
\[
-\bar c\norm{\phi'}_{L^2(\rr,\rr^n)}^2 
= \langle\phi',\mathcal{L}\bar\varphi\rangle_{L^2(\rr,\rr^n)}
= \langle\mathcal{L}^*\phi',\bar\varphi\rangle_{L^2(\rr,\rr^n)}
\,,
\]
and since $\mathcal{L}\phi'=0$, the following alternative expression for $\bar c$ follows:
\begin{equation}
\label{solv_cond_alt}
\bar c 
= \frac{\bigl\langle(\mathcal{L}-\mathcal{L}^*)\phi',\bar\varphi\bigr\rangle_{L^2(\rr,\rr^n)}}{\norm{\phi'}_{L^2(\rr,\rr^n)}^2}
= \frac{\bigl\langle\bigl(DF(\phi)-DF(\phi)^*\bigr)\phi'+2 c \phi'',\bar\varphi\bigr\rangle_{L^2(\rr,\rr^n)}}{\norm{\phi'}_{L^2(\rr,\rr^n)}^2}
\,. 
\end{equation}
A geometrical interpretation of this expression will be given below in a more specific case. 
\begin{remark} 
If $F(.)$ derives from a potential and $c=0$, then each one among expressions~\cref{solv_cond} and~\cref{solv_cond_alt} yields $\bar c=0$. Indeed, in this case, $DF(\phi)$ equals $DF(\phi)^*$ and $c$ equals $0$ and $\mathcal{L}$ equals $\mathcal{L}^*$, thus:
\begin{itemize}
\item it follows directly from \cref{solv_cond_alt} that $\bar c=0$;
\item or it follows from $\mathcal{L}=\mathcal{L}^*$ that $\psi$ and $\phi'$ are proportional, thus (since $\bar\ddd$ is assumed to be symmetric) \cref{solv_cond} yiels $\bar c=0$. 
\end{itemize}
\end{remark}
\subsection{Case of a two-dimensional reaction system with symmetries}
\label{subsec:with_symmetries}
Now let us consider a more specific situation, assuming that the reaction system is two-dimensional, and that the two ``species'' under consideration are completely symmetric for this system, before the perturbation. Thus, keeping the notation and assumptions of the previous \namecrefs{subsec:with_symmetries}, let us assume in addition that the dimension $n$ of the field variable $u$ equals two. Let us denote by $(u_1,u_2)$ the canonical coordinates of a vector $u$ in $\rr^2$, let $\sss$ denote the orthogonal symmetry exchanging the coordinates in $\rr^2$, namely 
\[
\sss:(u_1,u_2)\mapsto (u_2,u_1)
\,,
\]
and, from now on, let us make the following hypotheses:
\begin{description}
\item[(H3)] 
$F\circ\sss=\sss F
\quad\mbox{and}\quad
\ddd\sss=\sss\ddd
\quad\mbox{and, for all $x$ in $\rr$,}\quad
\phi(-x)=\sss\phi(x)
\,.$
\end{description}
In other words, we assume that both the reaction-diffusion system and the front $\phi(.)$ are $u_1\leftrightarrow u_2$-symmetric. 
\begin{lemma}[$c$ equals $0$]
\label{lem:c_equals_zero}
The speed $c$ equals $0$.
\end{lemma}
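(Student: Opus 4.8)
The plan is to read the speed directly off the explicit formula \cref{expr_c0}, namely $c = -\langle F(\phi), \phi'\rangle_{L^2(\rr,\rr^n)} / \norm{\phi'}_{L^2(\rr,\rr^n)}^2$, and to show that its numerator vanishes as a consequence of the three symmetry relations gathered in (H3). Since $\phi$ is a genuine front joining the two distinct equilibria $E_-$ and $E_+$, the profile is non-constant and the denominator $\norm{\phi'}_{L^2(\rr,\rr^n)}^2$ is strictly positive, so it suffices to prove that the integral $I = \int_{-\infty}^{+\infty} F\bigl(\phi(\xi)\bigr)\cdot\phi'(\xi)\,d\xi$ equals zero.

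To this end I would perform the change of variable $\xi\mapsto-\xi$ in $I$ and then invoke the symmetry data. The reflection relation $\phi(-x)=\sss\phi(x)$ differentiates into $\phi'(-x)=-\sss\phi'(x)$, while the equivariance $F\circ\sss=\sss F$ turns $F\bigl(\phi(-\xi)\bigr)$ into $\sss F\bigl(\phi(\xi)\bigr)$. Substituting these into the transformed integral, its integrand becomes $\bigl(\sss F(\phi(\xi))\bigr)\cdot\bigl(-\sss\phi'(\xi)\bigr)$. The last ingredient is that $\sss$ is an orthogonal matrix, so that $(\sss a)\cdot(\sss b)=a\cdot b$ for all $a,b$ in $\rr^2$; this collapses the integrand to $-F\bigl(\phi(\xi)\bigr)\cdot\phi'(\xi)$, whence $I=-I$, hence $I=0$ and $c=0$.

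The computation is elementary and I do not anticipate a genuine obstacle; the only points demanding care are the bookkeeping of signs — in particular the minus sign produced by differentiating the reflection relation, which must not be confused with the minus sign coming from reversing the orientation of the integral — and the explicit use of the orthogonality of $\sss$, without which the two scalar factors could not be recombined. One could equally phrase the argument conceptually rather than computationally: applying $\sss$ to the front equation \cref{syst_front} evaluated at $-\xi$ shows that $\xi\mapsto\sss\phi(-\xi)$ is a travelling front of speed $-c$; but (H3) identifies this function with $\phi$ itself, so $\phi$ solves \cref{syst_front} with both speeds $c$ and $-c$, and subtracting the two identities gives $2c\,\phi'\equiv 0$, forcing $c=0$ since $\phi'\not\equiv 0$. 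Either route reaches the conclusion with no real difficulty.
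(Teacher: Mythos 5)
Your proposal is correct, and your primary argument takes a genuinely different route from the paper's. The paper argues pointwise: it evaluates the profile equation \cref{syst_front} at $-x$, pushes the symmetries (H3) through it, and compares with the same equation at $x$ to conclude $c\,\phi'(x)=0$ for every $x$, hence $c=0$; this is precisely the ``conceptual'' variant you sketch at the end (your observation that $\xi\mapsto\sss\phi(-\xi)$ is a front of speed $-c$ that (H3) identifies with $\phi$ itself). Your main route instead substitutes the symmetries into the integral representation \cref{expr_c0} and shows the numerator equals its own negative under the reflection $\xi\mapsto-\xi$, using $\phi'(-\xi)=-\sss\phi'(\xi)$, the equivariance of $F$, and the orthogonality of $\sss$; the sign bookkeeping is handled correctly and the denominator is indeed positive since $\phi$ is non-constant. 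What the two approaches buy: your integral route leans on \cref{expr_c0}, whose derivation already required the decay and square-integrability of $\phi'$ and $F(\phi)$ furnished by the hyperbolicity of $E_-$ and $E_+$, whereas the paper's pointwise comparison needs no integration at all and only the fact that $\phi'\not\equiv 0$, so it is marginally more elementary and would survive even without the integrability hypotheses. Under the paper's standing assumptions, both arguments are sound.
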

In other words, the front $\phi$ is a standing front.
\begin{proof}
For every real quantity $x$, system \cref{syst_front} considered at $-x$ reads
\[
-c\phi'(-x) = F\bigl(\phi(-x)\bigr) + \ddd \phi''(-x)
\] 
and this yields, according to (H3),
\[
c\sss \phi'(x) = \sss F\bigl(\phi(x)\bigr) + \sss \ddd \phi''(x)
\,,
\] 
and finally, getting rid of $\sss$ in this equality and comparing with system \cref{syst_front} considered at $x$, it follows that $c\phi'(x)$ equals $0$, and this proves \cref{lem:c_equals_zero}.
\end{proof}
Thus the operators $\mathcal{L}$ and $\mathcal{L}^*$ reduce to:
\[
\mathcal{L} = DF(\phi) + \ddd\partial_{\xi\xi}
\quad\mbox{and}\quad
\mathcal{L}^* = DF(\phi)^* + \ddd\partial_{\xi\xi}
\,.
\]
Since the matrix $\bar{\ddd}$ is not assumed to be $u_1\leftrightarrow u_2$-symmetric (in other words we do not assume that $\bar{\ddd}\sss=\sss\bar{\ddd}$), the perturbation  in \cref{r_diff_pert} in general breaks the $u_1\leftrightarrow u_2$-symmetry. 
For all $u$ in $\rr^2$, let us denote by $\rot F(u)$ the infinitesimal rotation of the vector field $F$. This quantity can be defined by:
\begin{equation}
\label{def_rot}
DF(u)-DF(u)^*=
\begin{pmatrix}
0&-\rot F(u)\\  \rot F(u)&0
\end{pmatrix}
\,.
\end{equation}
With this notation, expression~\cref{solv_cond_alt} becomes:
\begin{equation}
\label{bar_c_rot}
\bar c 
= \frac{\int_{-\infty}^{+\infty} \rot F\bigl(\phi(x)\bigr) \cdot\bigl(\phi'(x) \wedge \bar\varphi(x)\bigr)\, dx}{\int_{-\infty}^{+\infty}\phi'^2(x)\, dx}
= \frac{\bigl\langle \rot F(\phi),\phi' \wedge \bar\varphi\bigr\rangle_{L^2(\rr,\rr^n)}}{\norm{\phi'}_{L^2(\rr,\rr^n)}^2}
\,.
\end{equation}
\subsection{Geometric interpretation of the first order variation of the speed}
\label{subsec:geom_interpret_bar_c}
The last expression \cref{bar_c_rot} of $\bar{c}$ admits the following geometrical interpretation. Let us denote by $\Phi$ the image (the trajectory) in $\rr^2$ of the standing front $\phi$, that is: 
\[
\Phi = \{\phi(x): x\in\rr\}\subset \rr^2
\,.
\] 
The infinitesimal rotation $\rot F(\phi)$ measures the ``shear'' induced locally along $\Phi$ by the antisymmetric part of $DF$, and the real quantity $\phi'\wedge\bar\varphi$ is determined by the component of the perturbation $\bar\varphi$ that is orthogonal to $\Phi$ (see \cref{fig:barc_rot_gen}). The shear induced by $F$ acts on this transverse perturbation (it ``pushes'' towards $E_-$ or $E_+$, as illustrated on \cref{fig:barc_rot_gen}), and this results in a change for the speed that is given at first order in $\epsilon$ by the quantity $\bar c$ defined above.
\begin{figure}[!htbp]
	\centering
    \includegraphics[width=0.7\textwidth]{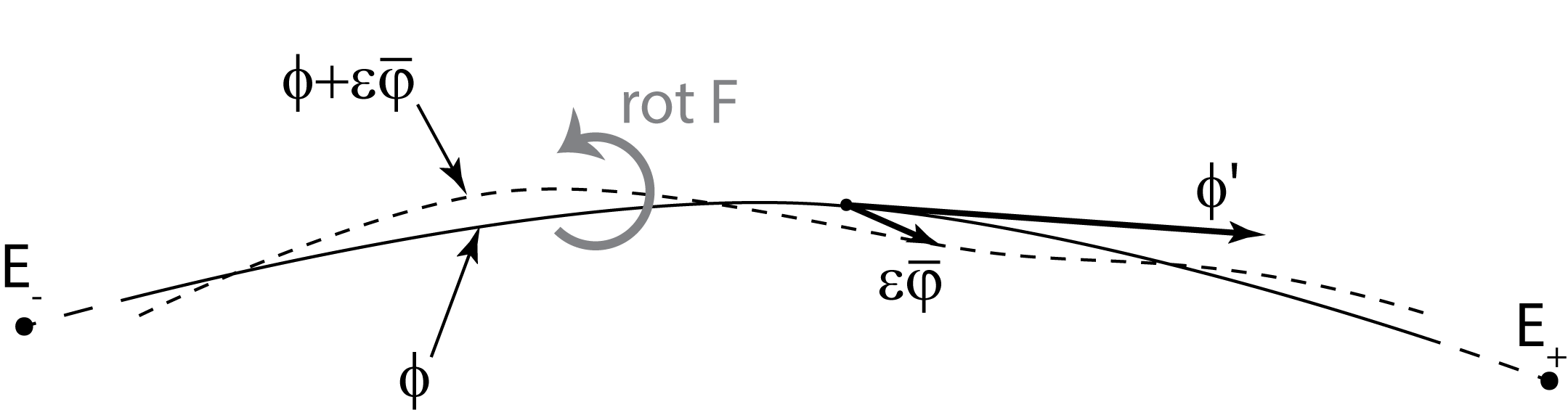}
    \caption{Geometrical illustration of expression~\cref{bar_c_rot} of $\bar c$.}
    \label{fig:barc_rot_gen}
\end{figure}
\subsection{Reduction using symmetry}
\label{subsec:red_sym}
The aim of this \namecref{subsec:red_sym} is to take into account the symmetries (H3) of the system to simplify expressions~\cref{solv_cond} and~\cref{bar_c_rot} of $\bar c$ (that is, to write the integrals in these expressions as integrals on $\rr_+$ only, instead of $\rr$). The first symmetries on the terms involved in these integrals are stated in the following lemma. 
\begin{lemma}[symmetries of $\rot F$ and $\psi$]
\label{lem:sym_psi_rotF}
For every real quantity $x$, 
\begin{equation}
\label{sym_psi_rotF}
\rot F\bigl(\phi(-x)\bigr) = - \rot F\bigl(\phi(x)\bigr)
\quad\mbox{and}\quad
\psi(-x)=-\sss\psi(x)
\,.
\end{equation}
\end{lemma}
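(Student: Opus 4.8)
The plan is to derive each of the two symmetry relations directly from the defining properties of $\rot F$ and $\psi$, exploiting the symmetry hypotheses (H3) together with the relation $\phi(-x)=\sss\phi(x)$.

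First I would handle the symmetry of $\rot F$. The key observation is how the defining relation \cref{def_rot} transforms under conjugation by $\sss$. Since $\sss$ exchanges the two coordinates in $\rr^2$, it is an orthogonal involution with $\sss=\sss^*=\sss^{-1}$, and conjugating the antisymmetric matrix $\left(\begin{smallmatrix} 0 & -a \\ a & 0 \end{smallmatrix}\right)$ by $\sss$ flips its sign, i.e.\ $\sss\left(\begin{smallmatrix} 0 & -a \\ a & 0 \end{smallmatrix}\right)\sss = \left(\begin{smallmatrix} 0 & a \\ -a & 0 \end{smallmatrix}\right)$. Differentiating $F\circ\sss = \sss F$ gives $DF(\sss u)\,\sss = \sss\, DF(u)$, hence $DF(\sss u) = \sss\, DF(u)\,\sss$. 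Taking adjoints and using $\sss^*=\sss$ yields $DF(\sss u)^* = \sss\, DF(u)^*\,\sss$, so the antisymmetric part satisfies $DF(\sss u)-DF(\sss u)^* = \sss\bigl(DF(u)-DF(u)^*\bigr)\sss$. Reading off the off-diagonal entry through \cref{def_rot} and using the sign flip under conjugation gives $\rot F(\sss u) = -\rot F(u)$. Evaluating at $u=\phi(x)$ and using $\phi(-x)=\sss\phi(x)$ then gives the first equality in \cref{sym_psi_rotF}.

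Next I would treat $\psi$. The strategy is to show that the function $x\mapsto -\sss\psi(-x)$ lies in $\ker(\mathcal{L}^*)$ and satisfies the normalization \cref{norm_cond}, so that by the uniqueness established in \cref{subsec:ker_adjoint} it must coincide with $\psi$. Since $\psi\in\ker(\mathcal{L}^*)$ with $\mathcal{L}^* = DF(\phi)^* + \ddd\partial_{\xi\xi}$ (using $c=0$ from \cref{lem:c_equals_zero}), I would compute $\mathcal{L}^*$ applied to $x\mapsto\sss\psi(-x)$. The reflection $x\mapsto -x$ leaves $\partial_{\xi\xi}$ invariant, and the conjugation identity $DF(\phi(-x))^* = \sss\,DF(\phi(x))^*\,\sss$ (obtained above, evaluated at $\phi(-x)=\sss\phi(x)$), together with $\ddd\sss=\sss\ddd$ from (H3), lets me commute $\sss$ through the whole operator. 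This shows $x\mapsto\sss\psi(-x)$, and hence $x\mapsto-\sss\psi(-x)$, lies in $\ker(\mathcal{L}^*)$, which is one-dimensional. It remains to check normalization: one computes $\langle -\sss\psi(-\cdot),\phi'\rangle_{L^2}$ by the substitution $x\mapsto-x$ and the identity $\phi'(-x)=-\sss\phi'(x)$ (differentiating $\phi(-x)=\sss\phi(x)$), using $\sss^*=\sss$ to move $\sss$ onto $\phi'$; this reproduces $\langle\psi,\phi'\rangle_{L^2}=1$. Uniqueness then forces $-\sss\psi(-x)=\psi(x)$, i.e.\ $\psi(-x)=-\sss\psi(x)$.

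The main obstacle is bookkeeping rather than conceptual: one must track carefully the interplay between the spatial reflection $x\mapsto-x$ and the coordinate swap $\sss$, and verify that every operator in $\mathcal{L}^*$ intertwines correctly under the combined symmetry. In particular, the sign in $\phi'(-x)=-\sss\phi'(x)$ (from the chain rule producing an extra minus) is exactly what makes the normalization check go through and produces the minus sign in $\psi(-x)=-\sss\psi(x)$; getting these signs right is the delicate point.
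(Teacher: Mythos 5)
Your proposal is correct and follows essentially the same route as the paper: the same conjugation identity $DF(\sss u)=\sss\,DF(u)\,\sss$ (and its adjoint) for the $\rot F$ symmetry, and the same observation that the reflected-swapped function lies in the one-dimensional kernel of $\mathcal{L}^*$. The only difference is cosmetic and occurs at the very end: you pin down the proportionality constant by checking directly that $-\sss\psi(-\cdot)$ satisfies the normalization $\langle\cdot\,,\phi'\rangle_{L^2(\rr,\rr^n)}=1$, whereas the paper first writes $\sss\psi(-x)=\lambda\psi(x)$, gets $\lambda=\pm1$ from an involution argument, and then excludes $\lambda=1$ because it would force $\langle\psi,\phi'\rangle_{L^2(\rr,\rr^n)}=0$ --- the same change-of-variables computation, organized differently.
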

\begin{proof}
It follows from the symmetry of $F$ with respect to $\sss$ in (H3) that, for every $u$ in $\rr^2$, 
\[
F(\sss u)=\sss F(u) 
\quad\mbox{thus}\quad
DF_{\sss u} \sss = \sss DF_{u}
\quad\mbox{and}\quad
DF_{\sss u} = \sss DF_{u} \sss
\,,
\]
and since $\sss^*$ equals $\sss$,
\[
DF_{\sss u}^* = \sss DF_{u}^* \sss
\,.
\]
It follows that
\[
DF_{\sss u} - DF_{\sss u}^* = \sss (DF_{u} - DF_{u}^*) \sss
\]
and according to the definition \cref{def_rot} of $\rot F(\cdot)$ it follows that
\[
\rot F(\sss u) = - \rot F(u)
\,.
\]
Thus, for every real quantity $x$, still according to (H3),
\[
\rot F \bigl(\phi(-x)\bigr) = \rot F \bigl( \sss \phi(x)\bigr) = - \rot F \bigl( \phi(x)\bigr)
\,.
\]
and this proves the first equality of \cref{sym_psi_rotF}.

To prove the second equality, let us consider the function $\eta$ defined by: $\eta(x)=\psi(-x)$. Then, according to the expression of $\mathcal{L}^*$ and to hypotheses (H3), 
\[
\begin{aligned}
\mathcal{L}^* (\sss \eta)(x) &= \ddd \sss \psi''(-x) + DF^*\bigl( \phi(x)\bigr) \sss \psi(-x) \\
&= \sss \ddd \psi''(-x) + DF^*\bigl( \sss \phi(-x)\bigr) \sss \psi(-x) \\
&= \sss \Bigl( \ddd \psi''(-x) + DF^*\bigl( \phi(-x)\bigr) \psi(-x) \Bigr) \\
&= \sss (\mathcal{L}^* \psi) (-x) \\
&=0
\,.
\end{aligned}
\]
In other words, the function $x\mapsto \sss\psi(-x)$ belongs to the eigenspace associated to the eigenvalue $0$ for the operator $\mathcal{L}^*$. Since this eigenspace is one-dimensional and contains the nonzero function $\psi$, it follows that there exists a real quantity $\lambda$ such that, for every real quantity $x$, 
\[
\sss\psi(-x)=\lambda \psi(x)
\,,
\]
and since the map
\[
L^2(\rr,\rr^2) \rightarrow L^2(\rr,\rr^2), \quad f \mapsto \bigl( x\mapsto \sss f(-x) \bigr)
\]
is an involution, it follows that $\lambda=\pm 1$. According to the symmetry of $\phi$ with respect to $\sss$ in (H3), for every real quantity $x$, 
\[
\phi'(-x)=-\sss\phi'(x)
\quad\mbox{and}\quad
\psi(-x) = \lambda\sss\psi(x)
\,.
\]
This shows $\lambda$ cannot be equal to $1$, or else the function $x\mapsto \psi(x)\cdot\phi'(x)$ would be odd, and the scalar product $\langle\psi,\phi'\rangle_{L^2(\rr,\rr^n)}$ would vanish, whereas according to the assumptions we made this scalar product must be nonzero (and was actually normalized to $1$). Thus $\lambda$ equals $-1$, and this proves the second equality of \cref{sym_psi_rotF}. \Cref{lem:sym_psi_rotF} is proved.
\end{proof}
Since we are interested in the effect of breaking the $u_1\leftrightarrow u_2$-symmetry of the diffusion matrix, it is convenient to assume that the $u_1\leftrightarrow u_2$-symmetric part of the (symmetric) matrix $\bar{\ddd}$ vanishes (and that the $u_1\leftrightarrow u_2$-antisymmetric part of the same matrix does not vanish). This is exactly the meaning of our next hypothesis:
\begin{description}
\item[(H4)] $\sss\bar{\ddd}=-\bar{\ddd}\sss$ and $\bar{\ddd}\not=0$. 
\end{description}
According to this hypothesis, there exists a nonzero real quantity $d$ such that:
\[
\bar{\ddd}=
\begin{pmatrix}
d&0\\ 0&-d
\end{pmatrix}
\,.
\]
This hypothesis leads to the following additional symmetry. 
\begin{lemma}[symmetry of $\bar\varphi$]
\label{lem:sym_of_bar_phi}
For every real quantity $x$, 
\[
\bar\varphi(-x)=-\sss\bar\varphi(x)
\,.
\]
\end{lemma}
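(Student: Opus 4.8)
The plan is to mimic the proof of the second equality in \cref{lem:sym_psi_rotF}: show that the function $x\mapsto -\sss\bar\varphi(-x)$ solves the same first-order system \cref{1st_ord_front} as $\bar\varphi$, and that it satisfies the same orthogonality normalization $\langle\phi',\cdot\rangle_{L^2}=0$; then conclude by the uniqueness of the solution of \cref{1st_ord_front} in the orthogonal complement of $\phi'$. Recall that $\bar\varphi$ is characterized as the \emph{unique} solution of $\mathcal{L}\varphi=-\bar{\ddd}\phi''-\bar c\phi'$ satisfying $\langle\phi',\varphi\rangle_{L^2}=0$, so establishing these two facts for $\eta(x):=-\sss\bar\varphi(-x)$ forces $\eta=\bar\varphi$, which is exactly the claimed symmetry $\bar\varphi(-x)=-\sss\bar\varphi(x)$.

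First I would set $\eta(x)=-\sss\bar\varphi(-x)$ and compute $\mathcal{L}\eta$. Using $c=0$ from \cref{lem:c_equals_zero}, the operator is $\mathcal{L}=DF(\phi)+\ddd\partial_{\xi\xi}$. Applying the chain rule, $\eta''(x)=-\sss\bar\varphi''(-x)$, and using the commutation relations from (H3) — namely $\ddd\sss=\sss\ddd$ and $DF_{\sss u}=\sss DF_u\sss$ evaluated at $u=\phi(-x)$ together with $\sss\phi(-x)=\phi(x)$, which gives $DF(\phi(x))\sss=\sss DF(\phi(-x))$ — I would pull the $\sss$ through the operator just as in the $\mathcal{L}^*$ computation of \cref{lem:sym_psi_rotF}. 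This yields $\mathcal{L}\eta(x)=-\sss(\mathcal{L}\bar\varphi)(-x)$.

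The key step is then to evaluate the right-hand side. Since $\mathcal{L}\bar\varphi=-\bar{\ddd}\phi''$ (again using $\bar c\phi'$ drops because $\bar c$ appears with $\phi'$, but more directly because $c=0$ makes $\bar c\phi'$ the relevant term — here I should keep the full right-hand side $-\bar{\ddd}\phi''-\bar c\phi'$), I get $\mathcal{L}\eta(x)=-\sss\bigl(-\bar{\ddd}\phi''(-x)-\bar c\phi'(-x)\bigr)=\sss\bar{\ddd}\phi''(-x)+\bar c\,\sss\phi'(-x)$. Now I invoke hypothesis (H4), $\sss\bar{\ddd}=-\bar{\ddd}\sss$, and the front symmetries $\phi''(-x)=\sss\phi''(x)$ and $\phi'(-x)=-\sss\phi'(x)$ (the latter noted in the proof of \cref{lem:sym_psi_rotF}, the former obtained by differentiating $\phi(-x)=\sss\phi(x)$ twice). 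These combine to give $\sss\bar{\ddd}\phi''(-x)=-\bar{\ddd}\sss\sss\phi''(x)=-\bar{\ddd}\phi''(x)$ and $\bar c\,\sss\phi'(-x)=-\bar c\phi'(x)$, so $\mathcal{L}\eta=-\bar{\ddd}\phi''-\bar c\phi'$, confirming $\eta$ solves \cref{1st_ord_front}.

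Finally I would verify the normalization $\langle\phi',\eta\rangle_{L^2}=0$ by the change of variable $x\mapsto-x$ in the defining integral for $\bar\varphi$, using $\phi'(-x)=-\sss\phi'(x)$, $\eta(-x)=-\sss\bar\varphi(x)$, and the orthogonality of $\sss$ to reduce $\langle\phi',\eta\rangle_{L^2}$ to $\langle\phi',\bar\varphi\rangle_{L^2}=0$. Uniqueness then gives $\eta=\bar\varphi$, i.e. the stated symmetry. The main obstacle, though entirely routine, is bookkeeping the signs correctly: the odd symmetry of $\phi'$ versus the even symmetry of $\phi''$ interact with the anticommutation (H4) of $\bar{\ddd}$, and a sign slip anywhere would spoil the conclusion, so I would track each $\sss$ and each reflection carefully rather than collapsing steps.
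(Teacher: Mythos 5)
Your proof is correct and takes essentially the same route as the paper: both show that $\eta(x)=-\sss\bar\varphi(-x)$ satisfies system \cref{1st_ord_front} by pushing $\sss$ through the operator via (H3), (H4) and the symmetries $\phi'(-x)=-\sss\phi'(x)$, $\phi''(-x)=\sss\phi''(x)$, then check orthogonality to $\phi'$ and conclude by the uniqueness of the solution orthogonal to $\phi'$. The only differences are presentational — you apply $\mathcal{L}$ to $\eta$ directly whereas the paper multiplies the equation by $\sss$ and re-reads it, and your momentary hesitation about dropping $\bar c\phi'$ is harmless since you correctly keep the full right-hand side $-\bar{\ddd}\phi''-\bar c\phi'$ throughout.
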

\begin{proof}
Since $\bar\varphi$ is a solution of system \cref{1st_ord_front}, for every real quantity $x$, 
\[
\ddd \bar\varphi'' (x) + DF\bigl(\phi(x)\bigr) \bar\varphi (x) = - \bar{\ddd} \phi''(x) - \bar c \phi'(x)
\,.
\]
Multiplying (to the left) by $\sss$ both sides of this equality and using the symmetries (H3) and (H4), it follows that
\[
\ddd \sss \bar\varphi'' (x) + DF\bigl(\phi(-x)\bigr) \sss\bar\varphi (x) = \bar{\ddd} \phi''(-x) + \bar c \phi'(-x)
\,,
\]
and this shows that the function $x\mapsto -\sss \bar\varphi(-x)$ is also a solution of system \cref{1st_ord_front}. Observe in addition that according to the symmetry of $\phi$ this solution is orthogonal to $\phi'$ for the $L^2(\rr,\rr^n)$-scalar product. Thus this solution must be equal to $\bar\varphi$, and this proves the lemma. 
\end{proof}
\begin{lemma}[even integrands]
\label{lem:even_integrands}
The three functions  
\[
x\mapsto \psi(x)\cdot\bar{\ddd}\phi''(x)
\quad\mbox{and}\quad
x\mapsto \phi'(x)^2
\quad\mbox{and}\quad
x\mapsto \rot F\bigl(\phi(x)\bigr)\cdot\bigl(\phi'(x)\wedge\bar\varphi(x)\bigl)
\]
are even.
\end{lemma}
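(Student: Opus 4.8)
The plan is to verify directly that each of the three functions $g$ satisfies $g(-x)=g(x)$, by substituting the symmetry relations already established. The only preliminary computation needed is to record how the derivatives of $\phi$ transform: differentiating the relation $\phi(-x)=\sss\phi(x)$ of (H3) once yields $\phi'(-x)=-\sss\phi'(x)$, and differentiating a second time yields $\phi''(-x)=\sss\phi''(x)$. Throughout I will use that $\sss$ is an orthogonal involution, so that $\sss a\cdot\sss b=a\cdot b$ for all $a,b$ in $\rr^2$, together with the relations $\psi(-x)=-\sss\psi(x)$ and $\rot F\bigl(\phi(-x)\bigr)=-\rot F\bigl(\phi(x)\bigr)$ from \cref{lem:sym_psi_rotF}, the relation $\bar\varphi(-x)=-\sss\bar\varphi(x)$ from \cref{lem:sym_of_bar_phi}, and the anticommutation $\bar{\ddd}\sss=-\sss\bar{\ddd}$ coming from (H4).

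For the second function the computation is immediate: $\phi'(-x)^2=(-\sss\phi'(x))\cdot(-\sss\phi'(x))=\sss\phi'(x)\cdot\sss\phi'(x)=\phi'(x)^2$ by orthogonality of $\sss$. For the first function I substitute $\psi(-x)=-\sss\psi(x)$ and $\phi''(-x)=\sss\phi''(x)$ to obtain $\psi(-x)\cdot\bar{\ddd}\phi''(-x)=-\sss\psi(x)\cdot\bar{\ddd}\sss\phi''(x)$; then, using $\bar{\ddd}\sss=-\sss\bar{\ddd}$ to move $\sss$ across $\bar{\ddd}$, a second minus sign appears and cancels the first, so that I am left with $\sss\psi(x)\cdot\sss\bar{\ddd}\phi''(x)=\psi(x)\cdot\bar{\ddd}\phi''(x)$, again by orthogonality of $\sss$.

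The third function requires one extra observation, and is the only place where any care is needed. Since the wedge product on $\rr^2$ is an alternating bilinear form, a linear map $M$ acts on it through its determinant, $Ma\wedge Mb=\det(M)\,(a\wedge b)$; with $M=\sss$ and $\det\sss=-1$ this gives $\sss a\wedge\sss b=-(a\wedge b)$. Substituting $\phi'(-x)=-\sss\phi'(x)$ and $\bar\varphi(-x)=-\sss\bar\varphi(x)$, the two explicit minus signs cancel, leaving $\sss\phi'(x)\wedge\sss\bar\varphi(x)$, which by the previous remark equals $-(\phi'(x)\wedge\bar\varphi(x))$. Combining this single remaining sign with $\rot F\bigl(\phi(-x)\bigr)=-\rot F\bigl(\phi(x)\bigr)$, the product of the two factors is invariant, so the third integrand is even and the proof is complete. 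The whole computation is routine; the one point to get right is the sign bookkeeping in the wedge product, where the orientation-reversing character of $\sss$ (its determinant $-1$) is precisely what makes the signs cancel.
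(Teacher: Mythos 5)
Your proof is correct and takes essentially the same route as the paper: direct substitution of the symmetry relations from (H3), \cref{lem:sym_psi_rotF} and \cref{lem:sym_of_bar_phi}, with the sign of the wedge product handled via $\det\sss=-1$. The only difference is one of detail: you make explicit the use of the anticommutation $\sss\bar{\ddd}=-\bar{\ddd}\sss$ from (H4) for the first integrand, a point the paper's one-line justification leaves implicit.
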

\begin{proof}
For the two first functions, the symmetry follows from (H3) and \cref{lem:sym_psi_rotF}. For the third function, observe that, for every real quantity $x$, according to (H3) and \cref{lem:sym_of_bar_phi},
\[
\phi'(-x)\wedge\bar\varphi(-x) = \sss \phi'(x)\wedge \sss\bar\varphi(x) = - \phi'(x)\wedge\bar\varphi(x)
\,.
\]
and the result follows from \cref{lem:sym_psi_rotF}.
\end{proof}
It follows from \cref{lem:even_integrands} that expressions~\cref{solv_cond} and~\cref{bar_c_rot} of $\bar c$ can be rewritten with integrals restricted to $\rr_+$, namely:
\begin{align}
\label{s_cond_rr_plus}
\bar c 
&= -2 \int_{0}^{+\infty} \psi(x)\cdot\bar{\ddd}\phi''(x)\,dx
= -2 \langle\psi,\bar{\ddd}\phi''\rangle_{L^2(\rr_+,\rr^n)}
\\
\label{c_rot_rr_plus}
&= \frac{\int_{0}^{+\infty} \rot F\bigl(\phi(x)\bigr)\cdot\bigl(\phi'(x) \wedge \bar\varphi(x)\bigr)\,dx}{\int_{0}^{+\infty}\phi'^2(x)\,dx}
= \frac{\langle \rot F(\phi),\phi' \wedge \bar\varphi\rangle_{L^2(\rr_+,\rr^n)}}{\norm{\phi'}_{L^2(\rr_+,\rr^n)}^2}
\,.
\end{align}
The aim of the two following \namecrefs{sec:toy} is to compute the sign of the quantity $\bar c$ on two specific examples. 
\section{Toy example}
\label{sec:toy}
\subsection{Definition}
\label{subsec:toy_def}
The aim of this \namecref{sec:toy} is to show on a toy example that both signs can occur for the quantity $\bar c$. 
Let $u=(u_1,u_2)$ denote again the canonical coordinates in $\rr^2$, let $\mu$ denote a real quantity (a parameter), and let us consider the following system (see \cref{fig:phase_toy}): 
\begin{figure}[!htbp]
	\centering
    \includegraphics[width=0.6\textwidth]{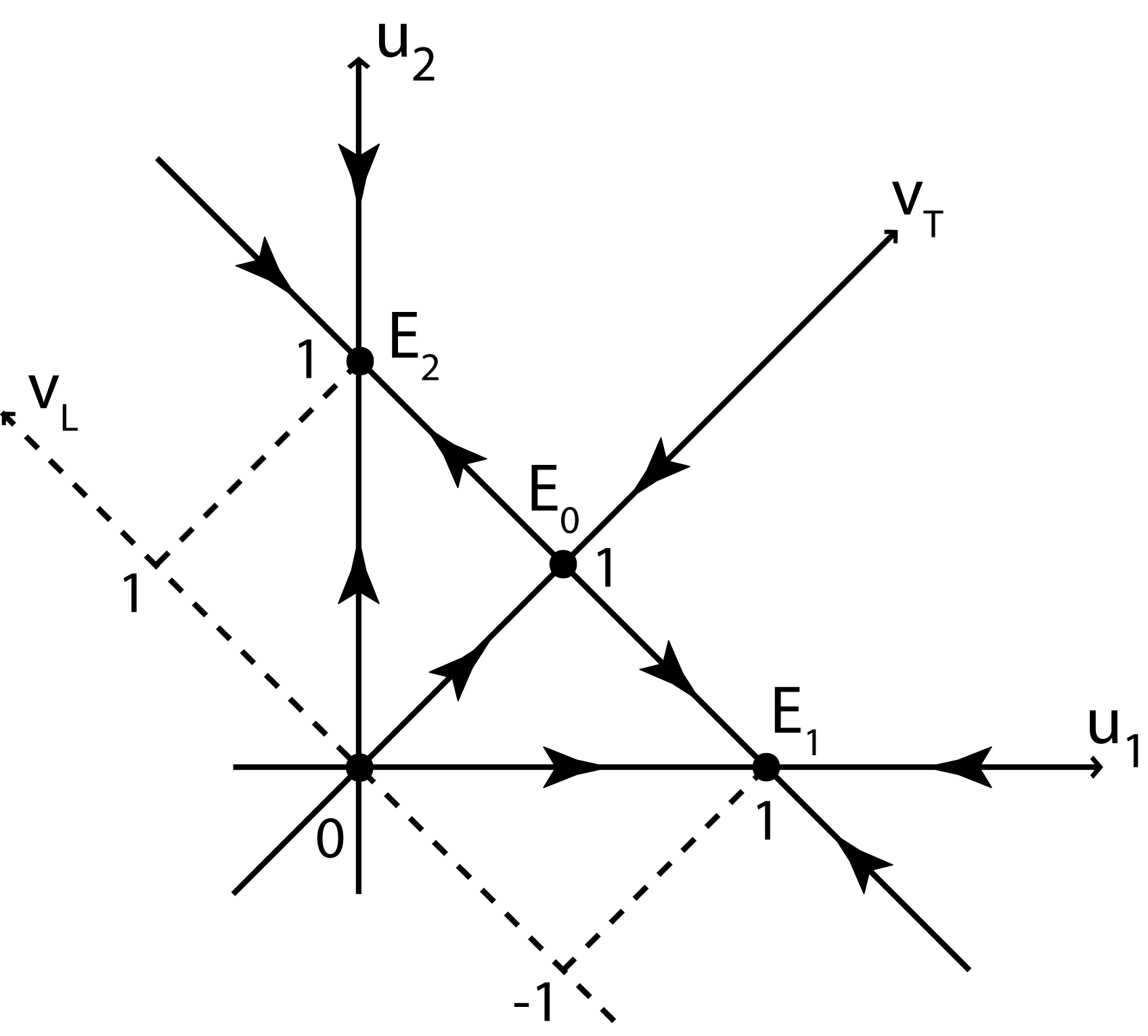}
    \caption{Phase space of the reaction system $u_t=F_\mu(u)$ for $\mu>0$.}
    \label{fig:phase_toy}
\end{figure}
\begin{equation}
\label{syst_toy_u}
u_t = F_\mu(u) + u_{xx}
\quad\mbox{with}\quad
F_\mu(u) = 
F_\mu\begin{pmatrix}u_1 \\ u_2\end{pmatrix}=
\begin{pmatrix}
u_1\bigl(1-(u_1+u_2)-\mu u_2(u_2-u_1)\bigr)\\  u_2\bigl(1-(u_1+u_2)-\mu u_1(u_1-u_2)\bigr)
\end{pmatrix}
. 
\end{equation}
Both axes $\{u_2=0\}$ and $\{u_1=0\}$ are invariant under the reaction system $u_t=F_\mu(u)$, and the restriction of this system to each of these axes is nothing but the logistic equation: $w_t=w(1-w)$. 
Besides, the $u_1\leftrightarrow u_2$-symmetry $F_\mu\circ\sss=\sss F_\mu$ clearly holds.
\begin{notation}
Let us consider the following alternative coordinate system $v=(v_T,v_L)$ related to $u=(u_1,u_2)$ by:
\begin{equation}
\label{def_trans_long_coord}
\left\{
\begin{aligned}
v_T &= u_1+u_2 \\
v_L &= -u_1+u_2
\end{aligned}
\right.
\Longleftrightarrow
\left\{
\begin{aligned}
u_1 &= \frac{v_T-v_L}{2} \\
u_2 &= \frac{v_T+v_L}{2}
\end{aligned}
\right.
\end{equation}
(see \cref{fig:phase_toy}). The subscripts ``$T$'' and ``$L$'' refer to the adjectives ``transversal'' and ``longitudinal'', with respect to the standing front $\phi$ that will be defined below. Along this \namecref{sec:toy} and the next one, these subscripts will always be used to denote the coordinates of a point in this ``transversal-longitudinal'' coordinate system, whereas the subscripts ``$1$'' and ``$2$'' will always be used to denote the canonical coordinates. 
\end{notation}
When expressed within the transversal-longitudinal coordinate system, system \cref{syst_toy_u} takes the form
\begin{equation}
\label{syst_toy_v}
v_t=G_\mu(v)+v_{xx}
\quad\mbox{with}\quad
G_\mu(v)=G_\mu(v_T,v_L)=
\begin{pmatrix}
v_T-v_T^2 \\
v_L\Bigl(1-v_T+\frac{\mu}{2}(v_T^2-v_L^2)\Bigr)
\end{pmatrix}
\,.
\end{equation}
According to this expression, the line $\{v_T=1\}$ is invariant (and transversely attractive), and the restriction of system \cref{syst_toy_v} to this line reads:
\[
\partial_t v_L=\frac{\mu}{2}v_L(1-v_L^2)+\partial_{xx}v_L
\,.
\] 
Thus, if the parameter $\mu$ is negative, the reaction system is monostable, with a unique stable equilibrium $E_0$ at $(v_T,v_L)=(1,0)$, whereas if $\mu$ is positive then it is bistable, with two stable equilibria $E_1$ at $(v_T,v_L)=(1,-1)$ and $E_2$ at $(v_T,v_L)=(1,1)$, and a saddle $E_0$ at $(v_T,v_L)=(1,0)$ (see \cref{fig:phase_toy}). 
By the way,
\begin{equation}
\label{lin_asymt_toy}
DF_\mu(E_1) = 
\begin{pmatrix}
-1 & 1+\mu \\ 0 & -\mu
\end{pmatrix}
,\quad
DF_\mu(E_2) = 
\begin{pmatrix}
-\mu & 0 \\ 1+\mu & -1
\end{pmatrix}
,\quad
DG_\mu(E_0) = 
\begin{pmatrix}
-1 & 0 \\ 0 & \frac{\mu}{2}
\end{pmatrix}
\end{equation}
(compare with expression \vref{lin_asymot_lv} for the Lotka--Volterra competition system).
\subsection{Standing front}
\label{subsec:toy_standing_front}
Let us assume from now on that $\mu$ is positive (bistable case). In this case there exists for systems \cref{syst_toy_u,syst_toy_v} a standing front $x\mapsto \phi(x)$ connecting $E_1$ to $E_2$, which is given (in transversal-longitudinal coordinates) by the explicit formula: 
\begin{equation}
\label{toy_front}
\phi_T(x)\equiv 1
\quad\mbox{and}\quad
\phi_L(x)=\tanh\frac{\sqrt{\mu}x}{2}
\end{equation}
(the connection with the notation used in \cref{sec:general} is obvious: equilibria $E_1$ and $E_2$ defined above correspond to equilibria $E_-$ and $E_+$ of \cref{sec:general}, respectively). 

This standing front satisfies the $u_1\leftrightarrow u_2$-symmetry, that is $\phi(-x)=\sss\phi(x)$ for every real quantity $x$. All the symmetry hypotheses (H3) are therefore satisfied for the system \cref{syst_toy_u} and the standing front $\phi$. 

For the remaining of \cref{sec:toy} we shall mainly work with the transversal-longitudinal coordinate system.
The linear operator $\mathcal{L}$ (obtained by linearising system \cref{syst_toy_v} around this standing front) reads, expressed in these coordinates, 
\begin{equation}
\label{expression_L_toy}
\mathcal{L}
\begin{pmatrix}
\varphi_T \\ 
\varphi_L
\end{pmatrix}
=
\begin{pmatrix}
-1 & 0 \\ 
(\mu-1)\phi_L & \frac{\mu}{2}(1-3\phi_L^2)
\end{pmatrix}
\begin{pmatrix}
\varphi_T \\ 
\varphi_L
\end{pmatrix}
+
\begin{pmatrix}
\varphi_T'' \\ 
\varphi_L''
\end{pmatrix}
\,.
\end{equation}
\begin{lemma}[spectral stability of the standing front, toy example]
\label{lem:lin_st_toy}
The standing front \\
$x\mapsto \phi(x)$ is spectrally stable.
\end{lemma}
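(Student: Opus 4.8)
The operator $\mathcal{L}$ in \cref{expression_L_toy} is block-triangular, which is the key structural feature I would exploit. Because the $(1,2)$-entry of the matrix part vanishes, the transversal equation decouples: the first component of any eigenfunction satisfies $-\varphi_T + \varphi_T'' = \lambda \varphi_T$, i.e.\ $\varphi_T'' = (\lambda+1)\varphi_T$. The plan is therefore to show spectral stability component by component. First I would establish that $E_1$ and $E_2$ are spectrally stable equilibria (hypothesis \hypStabInfty), by inspecting the matrices $DF_\mu(E_1)$ and $DF_\mu(E_2)$ in \cref{lin_asymt_toy}: each is triangular with diagonal entries $-1$ and $-\mu$ (both negative since $\mu>0$), so $DF_\mu(E_\pm)-k^2\ddd = DF_\mu(E_\pm)-k^2 I$ has eigenvalues $-1-k^2$ and $-\mu-k^2$, all with negative real part. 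This settles the essential spectrum.

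For the point spectrum, I would argue that any eigenvalue $\lambda$ of $\mathcal{L}$ with nonnegative real part forces $\varphi_T\equiv 0$. Indeed, for $\mathrm{Re}(\lambda)\ge 0$ we have $\mathrm{Re}(\lambda+1)>0$, so the scalar equation $\varphi_T''=(\lambda+1)\varphi_T$ has no nonzero solution in $L^2(\rr)$ (its solutions are exponentials $e^{\pm\sqrt{\lambda+1}\,x}$, neither of which is square-integrable when $\sqrt{\lambda+1}$ has nonzero real part). Hence on the closed right half-plane the eigenvalue problem reduces to the longitudinal scalar operator
\[
\mathcal{L}_L \varphi_L = \varphi_L'' + \tfrac{\mu}{2}(1-3\phi_L^2)\,\varphi_L = \lambda \varphi_L
\,,
\]
obtained by setting $\varphi_T=0$ in \cref{expression_L_toy}. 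This is exactly the linearisation, around the standing front $\phi_L=\tanh(\sqrt{\mu}\,x/2)$, of the scalar bistable (Allen--Cahn type) equation $\partial_t v_L = \tfrac{\mu}{2}v_L(1-v_L^2)+\partial_{xx}v_L$ to which \cref{syst_toy_v} restricts on the invariant line $\{v_T=1\}$.

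The crux of the argument is then the spectral stability of this scalar one-dimensional operator, which is classical. The operator $\mathcal{L}_L$ is self-adjoint in $L^2(\rr)$ (a scalar Schr\"odinger operator), so all its eigenvalues are real; it suffices to show its largest eigenvalue is $0$. I would invoke the standard fact that for a scalar bistable reaction-diffusion equation the derivative $\phi_L'$ of the front is a positive (up to sign) eigenfunction lying in the kernel, $\mathcal{L}_L\phi_L'=0$, and that by Sturm--Liouville theory the eigenfunction of the \emph{lowest} node count (here $\phi_L'=\tfrac{\sqrt{\mu}}{2}\,\mathrm{sech}^2(\sqrt{\mu}\,x/2)$, which has no zeros) corresponds to the \emph{largest} eigenvalue. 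Since a nodeless eigenfunction is the ground state, $0$ is the top of the spectrum and every other eigenvalue is strictly negative. (Concretely, $\mathcal{L}_L$ is a P\"oschl--Teller operator whose discrete spectrum can be written down explicitly, confirming this.) Combining the three pieces---stable essential spectrum, the exclusion of $\varphi_T$ in the right half-plane, and the scalar ground-state argument---gives that every nonzero eigenvalue of $\mathcal{L}$ has negative real part, and that $0$ is simple with eigenfunction $(0,\phi_L')^{\mathsf T}=\phi'$, which also verifies \hypTransv. Thus $\phi$ is spectrally stable.

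The main obstacle I anticipate is not any single step but making the node-counting/ground-state argument fully rigorous: one must confirm that $\lambda=0$ is genuinely the top eigenvalue (not merely an eigenvalue), which rests on the sign-definiteness of $\phi_L'$ together with Sturm oscillation theory, and one must also check algebraic (not merely geometric) simplicity of the zero eigenvalue for the full operator $\mathcal{L}$ to secure \hypTransv. The block-triangular structure makes the latter manageable, since the generalized kernel cannot leak into the transversal component.
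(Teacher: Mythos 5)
Your proposal is correct and follows essentially the same route as the paper's proof: stable essential spectrum read off from the triangular matrices $DF_\mu(E_1)$, $DF_\mu(E_2)$, the block-triangular structure of $\mathcal{L}$ forcing $\varphi_T\equiv 0$ so that the eigenvalue problem reduces to the scalar operator $\ell_{\mu}$, and the Sturm--Liouville ground-state argument based on the nodeless kernel eigenfunction $\phi_L'$. The only difference is one of emphasis: you spell out the exclusion of generalized eigenvectors (algebraic simplicity of the zero eigenvalue of the full operator $\mathcal{L}$), which the paper leaves implicit in its appeal to the simplicity of the zero eigenvalue of the self-adjoint operator $\ell_{\mu}$ together with the decoupling of the transversal component.
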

\begin{proof}
According to expressions \cref{lin_asymt_toy}, the essential spectrum of $\mathcal{L}$ is the interval:
\[
(-\infty,\max(-1,-\mu)] 
\quad\mbox{included in}\quad
(-\infty,0)
\,.
\]
A function 
\[
x\mapsto \varphi(x)=\bigl(\varphi_T(x),\varphi_L(x)\bigr)
\]
is an eigenfunction of $\mathcal{L}$ for an eigenvalue $\lambda$ if and only if $\varphi_T$ vanishes identically and $\varphi_L$ is an eigenfunction of the operator
\begin{equation}
\label{lin_toy_v2}
\ell_{\mu}:\varphi_L \mapsto \frac{\mu}{2}(1-3\phi_L^2) \varphi_L + \varphi_L''
\end{equation}
for the same eigenvalue $\lambda$. 
The function $x\mapsto\varphi'_L(x)$ is an eigenfunction of $\ell_{\mu}$ for the eigenvalue zero (this comes from translation invariance in space), and by a standard Sturm--Liouville argument (\cite{CoddingtonLevinson_theoryODE_1955}), this eigenvalue is simple and all other eigenvalues (they are real since $\ell_{\mu}$ is a self-adjoint operator for the $L^2(\rr,\rr)$-scalar product) are negative. \Cref{lem:lin_st_toy} is proved. 
\end{proof}
\subsection{First order variation of the front speed}
\label{subsec:toy_front_speed}
According to the notation of \cref{subsec:setup}, $\ddd=\id_{\rr^2}$. Let us choose the perturbation matrix $\bar{\ddd}$ as follows: 
\begin{equation}
\label{def_bar_D_toy}
\begin{aligned}
\bar{\ddd} & =
\begin{pmatrix}
1&0\\  0&-1
\end{pmatrix}
\mbox{ in canonical coordinates,}\\
\mbox{or equivalently }
\bar{\ddd} & =
\begin{pmatrix}
0&-1\\  -1&0
\end{pmatrix}
\mbox{ in transversal-longitudinal coordinates.}
\end{aligned}
\end{equation}
All hypotheses (H1-4) of \cref{sec:general} are satisfied. Let us keep the notation $\mathcal{L}^*$ and $\psi$ and $\bar c$ and $\bar \varphi$ introduced there. The following result shows that both signs for $\bar c$ may occur, depending on the value of $\mu$. 
\begin{proposition}[sign of the first order variation of front speed, toy example]
\label{prop:bar_c_toy}
The \\
sign of $\bar c$ equals that of $1-\mu$; that is,
\[
\begin{aligned}
\bar 0<\bar c \quad & \mbox{if}\quad 0<\mu<1 \,, \\
\mbox{and}\quad \bar c=0 \quad & \mbox{if}\quad \mu=1 \,, \\
\mbox{and}\quad \bar c<0 \quad & \mbox{if}\quad 1<\mu 
\,.
\end{aligned}
\]
\end{proposition}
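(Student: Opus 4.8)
The plan is to work throughout in the transversal–longitudinal coordinates, where the standing front \cref{toy_front} and the linearised operator \cref{expression_L_toy} are explicit, and to read off the sign of $\bar c$ from the solvency condition \cref{solv_cond} applied component by component. First I would write the perturbation system \cref{1st_ord_front} componentwise. Since $\phi_T\equiv 1$ one has $\phi'=(0,\phi_L')$ and $\phi''=(0,\phi_L'')$, so with $\bar\ddd$ given in transversal–longitudinal coordinates by \cref{def_bar_D_toy} the right-hand side $-\bar\ddd\phi''-\bar c\,\phi'$ equals $(\phi_L'',-\bar c\,\phi_L')$. Using the lower-triangular shape of the matrix in \cref{expression_L_toy}, system \cref{1st_ord_front} then splits as
\[
\bar\varphi_T''-\bar\varphi_T=\phi_L''
\quad\mbox{and}\quad
\ell_\mu\bar\varphi_L=-\bar c\,\phi_L'-(\mu-1)\phi_L\,\bar\varphi_T,
\]
where $\ell_\mu$ is the scalar operator \cref{lin_toy_v2}.

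The first equation is closed and constant-coefficient in $\bar\varphi_T$; since $\partial_{xx}-1$ has spectrum $(-\infty,-1]$ it is invertible on $L^2(\rr,\rr)$, so $\bar\varphi_T$ is its unique (exponentially decaying, odd) solution, and I would not need a closed form for it. The second equation is solvable for $\bar\varphi_L\in H^2(\rr,\rr)$ if and only if its right-hand side is orthogonal to $\ker\ell_\mu$; by the Sturm–Liouville argument in the proof of \cref{lem:lin_st_toy}, $\ell_\mu$ is self-adjoint on $L^2(\rr,\rr)$ with $\ker\ell_\mu=\spanset(\phi_L')$. Taking the $L^2(\rr,\rr)$-scalar product with $\phi_L'$, which is exactly the solvency condition \cref{solv_cond} written in the second component, gives
\[
\bar c=\frac{1-\mu}{\norm{\phi_L'}_{L^2(\rr,\rr)}^2}\,J,
\quad\mbox{where}\quad
J:=\int_{-\infty}^{+\infty}\phi_L(x)\,\phi_L'(x)\,\bar\varphi_T(x)\,dx .
\]
Thus the sign of $\bar c$ is that of $(1-\mu)\,J$, with $\bar c=0$ immediate when $\mu=1$, and everything reduces to proving $J>0$.

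The positivity of $J$ is the one delicate point, and the key is to avoid computing $\bar\varphi_T$ explicitly. Differentiating \cref{toy_front} yields the identity $\phi_L''=-\sqrt{\mu}\,\phi_L\,\phi_L'$, hence $\phi_L\phi_L'=-\mu^{-1/2}\phi_L''$ and $J=-\mu^{-1/2}\int_{-\infty}^{+\infty}\phi_L''\,\bar\varphi_T\,dx$. Substituting $\phi_L''=\bar\varphi_T''-\bar\varphi_T$ from the first equation and integrating by parts once (the boundary terms vanish since $\bar\varphi_T\in H^2(\rr,\rr)$) turns $J$ into a manifestly positive quantity,
\[
J=\frac{1}{\sqrt{\mu}}\Bigl(\norm{\bar\varphi_T'}_{L^2(\rr,\rr)}^2+\norm{\bar\varphi_T}_{L^2(\rr,\rr)}^2\Bigr)>0,
\]
the inequality being strict because $\bar\varphi_T\not\equiv 0$ (otherwise $\phi_L''\equiv 0$). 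Combining this with the previous display shows that $\bar c$ has the sign of $1-\mu$, which is the assertion. The main obstacle is precisely this last positivity: a brute-force route would invert $\partial_{xx}-1$ by its Green's function and evaluate $J$ by hand, whereas the identity $\phi_L''=-\sqrt{\mu}\,\phi_L\phi_L'$ reduces $J$ to a squared Sobolev norm and bypasses that computation entirely.
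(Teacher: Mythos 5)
Your proposal is correct, and its key step is genuinely different from both proofs in the paper. Your derivation of the formula $\bar c=(1-\mu)\,J/\norm{\phi_L'}_{L^2(\rr,\rr)}^2$ with $J=\int_{-\infty}^{+\infty}\phi_L\phi_L'\bar\varphi_T\,dx$ is in substance the paper's expression~\cref{bar_c_rot} specialized to the toy example: pairing the second component of system~\cref{1st_ord_front} with $\phi_L'$ and using the self-adjointness of $\ell_{\mu}$ together with $\ell_{\mu}\phi_L'=0$ is the same computation as pairing the full system with $\phi'=(0,\phi_L')$, since $\rot F_\mu(\phi)=(\mu-1)\phi_L$ and $\phi'\wedge\bar\varphi=-\phi_L'\bar\varphi_T$. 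Where you diverge is in settling the sign of $J$. The paper (in both of its proofs) first reduces to integrals over $\rr_+$ via the symmetry machinery of \cref{subsec:red_sym} (\cref{lem:sym_of_bar_phi,lem:even_integrands}), and then obtains \emph{pointwise} sign information --- $\bar\varphi_T>0$ on $\rr_+^*$ in the first proof, or the sign of $\psi_T$ in the second --- by applying the appendix ODE lemma (\cref{lem:sol_forced_2nd_order}) to equation~\cref{proj_toy} (resp.~\cref{adjoint_toy}), with the boundary condition $\bar\varphi_T(0)=0$ (resp. $\psi_T(0)=0$) supplied by symmetry. You instead stay on the whole line and exploit the explicit profile through the identity $\phi_L''=-\sqrt{\mu}\,\phi_L\phi_L'$, which turns $J$ into $-\mu^{-1/2}\int\phi_L''\bar\varphi_T\,dx$; substituting $\phi_L''=\bar\varphi_T''-\bar\varphi_T$ and integrating by parts (legitimate, since $\bar\varphi_T\in H^2(\rr,\rr)$ so that $\bar\varphi_T$ and $\bar\varphi_T'$ vanish at infinity) yields $J=\mu^{-1/2}\bigl(\norm{\bar\varphi_T'}_{L^2(\rr,\rr)}^2+\norm{\bar\varphi_T}_{L^2(\rr,\rr)}^2\bigr)>0$, strictly since $\bar\varphi_T\equiv 0$ would force $\phi_L''\equiv 0$. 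This buys you a proof that needs neither \cref{lem:sol_forced_2nd_order}, nor the symmetry reduction, nor any pointwise knowledge of $\bar\varphi_T$; its cost is that it leans entirely on the tanh identity $\phi_L''\propto\phi_L\phi_L'$, whereas the paper's pointwise-sign route is the one that supports the geometric ``shear'' interpretation and the scheme later adapted to the Lotka--Volterra case. One cosmetic correction: what you invoke is not literally the solvency condition~\cref{solv_cond}, which pairs the system with $\psi\in\ker(\mathcal{L}^*)$, but the Fredholm alternative for the self-adjoint scalar operator $\ell_{\mu}$ in the second component --- equivalently the computation leading to \cref{solv_cond_alt} and \cref{bar_c_rot}; your argument is valid as written, only the label is off.
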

This proposition can be understood as follows.
\begin{itemize}
\item For $\mu$ in $(0,1)$, the perturbation promotes $E_1$. And since the perturbation increases the mobility of the species corresponding to $E_1$, this shows that an increase of mobility is advantageous in this case.
\item For $\mu$ larger than $1$, the perturbation promotes $E_1$. And for the same reason, this time, an increase of mobility turns out to be disadvantageous. 
\end{itemize}
Before proving this proposition, let us begin with a geometrical interpretation (this interpretation will by the way provide an informal proof, and make the proof easier to follow). 
\begin{figure}[!htbp]
	\centering
    \includegraphics[width=\textwidth]{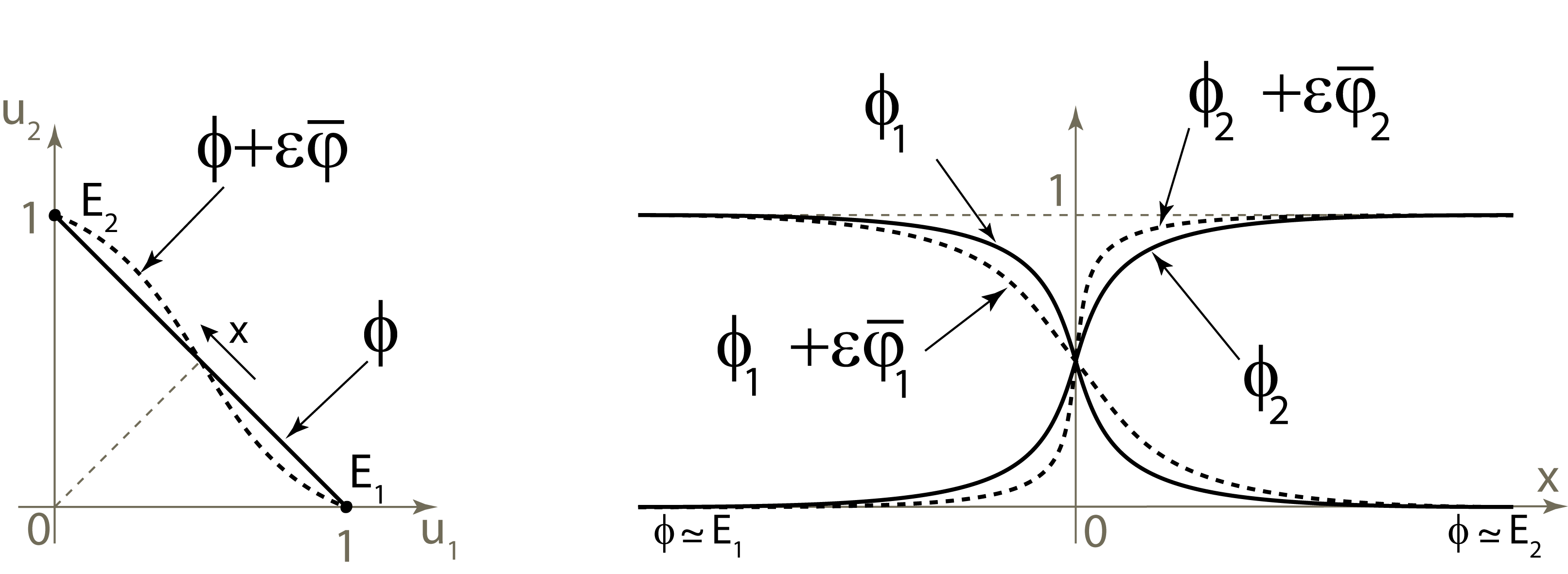}
    \caption{Left: the initial standing front $\phi$ and its perturbation $\phi+\epsilon\bar\varphi$. Right: the graphs of the components of these two fronts in the canonical coordinate system. The perturbation consists in an increase of the mobility of the first species (equilibrium $E_1$) and a decrease of the mobility of the second species (equilibrium $E_2$).}
    \label{fig:front_toy}
\end{figure}
\begin{figure}[!htbp]
	\centering
    \includegraphics[width=0.9\textwidth]{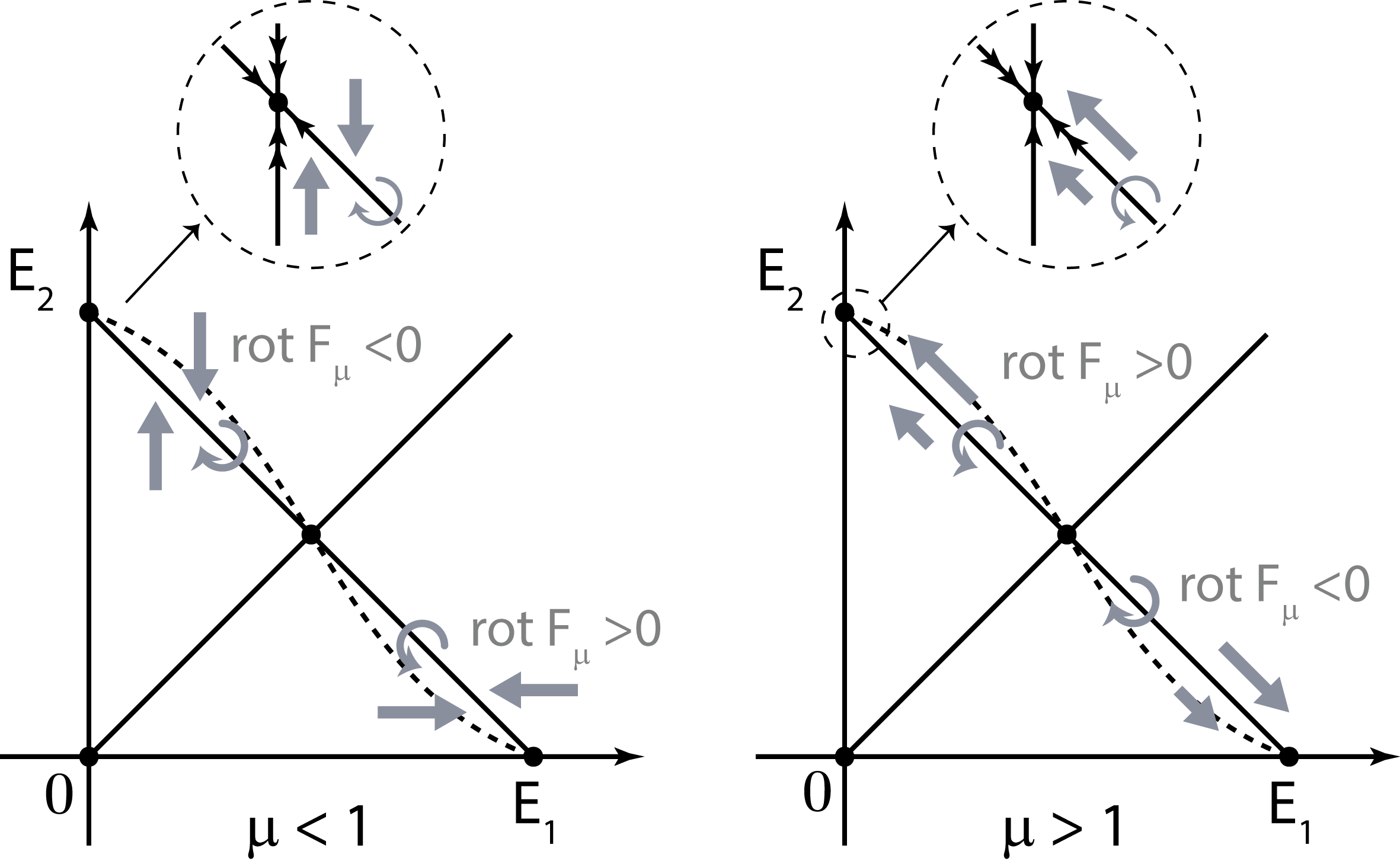}
    \caption{Orientation of the shear along the standing front in the ``moderately strong'' ($\mu$ smaller than $1$) versus ``hard'' ($\mu$ larger than $1$) competition regimes. In the moderately strong competition regime, being more mobile is an advantage, and the most mobile species (the first one, equilibrium $E_1$) wins with respect to the second species (equilibrium $E_2$). In the the hard competition regime, being more mobile is a disadvantage, and the most mobile species (the first one) looses.}
    \label{fig:shear_toy}
\end{figure}
Let $(\phi_1,\phi_2)$ denote the components of $\phi$ in the canonical coordinate system, and let $(\bar\varphi_1,\bar\varphi_2)$ denote the components of $\bar\varphi$ in the same canonical coordinate system. As illustrated on \cref{fig:front_toy}, the functions $\phi_1$ and $\phi_2$ are symmetric. The perturbation breaks this symmetry: the mobility of the first species has been slightly increased, while the mobility of the second species has been slightly decreased. As a consequence, one expects that the graph of $\phi_1+\epsilon\bar\varphi_1$ will be slightly flatter than that of $\phi_1$, and conversely that the graph of $\phi_2+\epsilon\bar\varphi_2$ will be slightly straighter than that of $\phi_2$ (\cref{fig:front_toy}). As a consequence, the position of the image of the perturbed front with respect to the image of the initial standing front should be as illustrated on \cref{fig:front_toy}; it suggests that $\bar\varphi_T(x)$ --- the first component of $\bar\varphi$ in the transversal-longitudinal coordinate system --- should be negative for $x$ negative and positive for $x$ positive (the proof below will confirm this). 

On the other hand, expression \cref{syst_toy_v} of the reaction-diffusion system in the transversal-longitudinal coordinate system yields, or all $x$ in $\rr$:
\begin{equation}
\label{rot_toy}
\rot F_\mu\bigl(\phi(x)\bigr)=(\mu-1)\phi_L(x)
\,. 
\end{equation}
Thus the sign of the shear induced by $F_\mu$ along $\phi$ depends on the sign of $\mu-1$ (see \cref{fig:shear_toy}). In view of \cref{fig:front_toy,fig:shear_toy}, it could be expected that, for $\mu<1$, the perturbation is in favour of $E_1$, while for $\mu>1$ it is in favour of $E_2$, as stated by \cref{prop:bar_c_toy}. 

Here is another possible interpretation. The parameter $\mu$ represents a sort of ``intensity'' of the competition between the two species. If $\mu$ is positive but smaller than $1$, the intensity can be qualified as ``moderately strong''. In this case, as illustrated on \cref{fig:shear_toy} (see the zoom on equilibrium $E_2$), the dominant effect of the reaction term is to balance the total density $u_1+u_2$ (to drive this total density to $1$), and this turns out to be in favour of the most mobile species. On the other hand, if $\mu$ is larger than $1$, then the intensity of the competition can be qualified as ``hard''.  It this case, the dominant effect of the reaction term is to drive the system in favour of the most represented species locally (and away of the $E_0$ saddle equilibrium where both densities are equal). And this turns out to be in favour of the less mobile species. In short (you may apply this to you everyday life \Smiley): if the struggle is moderate, spread away to gain new territories; if it is bloody, avoid the dispersal and concentrate your forces~! 

Let us now prove \cref{prop:bar_c_toy}.
Let us denote by $(\psi_T,\psi_L)$ and by $(\bar\varphi_T,\bar\varphi_L)$ the transversal-longitudinal coordinates of the functions $\psi$ and $\bar\varphi$. To prove \cref{prop:bar_c_toy}, each one among expressions \vref{s_cond_rr_plus,c_rot_rr_plus} can be used (resulting in two different proofs). The two proofs are given below, beginning with the proof involving expression~\cref{c_rot_rr_plus}, since it is closer to the geometrical interpretation above. 
\begin{proof}[Proof using expression~\cref{c_rot_rr_plus}] 
According to expression \cref{c_rot_rr_plus}, the sign of $\bar c$ is equal to the sign of:
\begin{equation}
\label{sign_bar_c_toy}
\int_{0}^{+\infty} \rot F_\mu\bigl(\phi(x)\bigr)\cdot\bigl(\phi'(x) \wedge \bar\varphi(x)\bigr)\,dx
\,.
\end{equation}
According to expression \cref{rot_toy} of $\rot F_\mu\bigl(\phi(x)\bigr)$ and expression \cref{toy_front} of $\phi_L(x)$ the function $\rot F_\mu\bigl(\phi(\cdot)\bigr)$ is of the sign of $\mu-1$ on $\rr_+$. 

On the other hand, since $\phi_T'$ is identically zero, the function $\phi'\wedge\bar\varphi$ equals $-\phi'_L\bar\varphi_T$, and according to expression \cref{toy_front} of $\phi_L(x)$ the function $\phi_L'$ is positive on $\rr_+$. It remains to determine the sign of $\bar\varphi_T$ on $\rr_+$. Projecting system \cref{1st_ord_front} on the $v_T$-axis yields: 
\begin{equation}
\label{proj_toy}
\bar\varphi_T''=\phi_L''+\bar\varphi_T
\,. 
\end{equation}
Recall that according to \vref{lem:sym_of_bar_phi}, the quantities $\bar\varphi(-x)$ and $-\sss\bar\varphi(x)$ are equal for every $x$ in $\rr$; as a consequence, since the transversal coordinate is unchanged by $\sss$, the quantity $\bar\varphi_T(0)$ must vanish. Since $\bar\varphi_T(x)$ approaches $0$ when $x$ approaches $+\infty$ and since according to \cref{toy_front} $\phi''_L(x)$ is negative for all $x$ in $\rr_+^*$, it follows from equation~\cref{proj_toy} shows that the function $\bar\varphi_T$ is positive on $\rr_+^*$ (see \vref{lem:sol_forced_2nd_order} in appendix). It follows that the function $\phi'\wedge\bar\varphi$ is negative on $\rr_+^*$, and this proves \cref{prop:bar_c_toy}. 
\end{proof}
\begin{proof}[Proof using expression~\cref{s_cond_rr_plus}]
According to expression \vref{s_cond_rr_plus}, we have:
\[
\bar c 
=- 2\int_{0}^{+\infty}\psi(x)\cdot\bar{\ddd}\phi''(x)\, dx 
= 2\int_{0}^{+\infty}\psi_T(x)\cdot\phi''_L(x)\, dx
\,,
\]
and we know from the explicit expression \cref{toy_front} of $\phi_L$ that the quantity $\phi''_L(x)$ is negative for all $x$ in $\rr_+^*$. Therefore all we have to do is show that $\psi_T$ and $\mu-1$ have the same sign (and vanish at the same time).  

According to the expression \vref{expression_L_toy} of $\mathcal{L}$, system $\mathcal{L}^*\psi=0$ reads (using the notation $\ell_{\mu}$ introduced in definition \cref{lin_toy_v2}):
\begin{align}
-\psi_T + (\mu-1)\phi_L\psi_L + \psi_T'' &= 0 \label{psi_T_toy}
\,, \\
\ell_{\mu}\psi_L &= 0 \label{psi_L_toy}
\,.
\end{align}
Since the eigenvalue zero of $\ell_{\mu}$ is simple (see the proof of \cref{lem:lin_st_toy} above), equation \cref{psi_L_toy} shows that the functions $\psi_L$ and $\phi'_L$ must be proportional. Thus $\psi_L = N\phi'_L$, where, according to the normalizing condition \vref{norm_cond}, the normalizing constant $N$ is:
\[
N = \Bigl(\int_{-\infty}^{+\infty}\phi_L'^2(x)\,dx\Bigr)^{-1} 
= \norm{\phi_L'}_{L^2(\rr,\rr)}^{-2}>0
\,.
\]
Thus, according to equation \cref{psi_T_toy}, the following differential equation holds for $\psi_T$:
\begin{equation}
\label{adjoint_toy}
\psi_T'' = \psi_T+N(1-\mu)\phi_L\phi'_L
\,.
\end{equation}
Recall that according to \vref{lem:sym_psi_rotF}, the quantities $\psi(-x)$ and $-\sss\psi(x)$ are equal for every $x$ in $\rr$; as a consequence, since the transversal coordinate is unchanged by $\sss$, the quantity $\psi_T(0)=0$ must vanish. Since $\psi_T(x)$ approaches $0$ when $x$ approaches $+\infty$, and since both quantities $\phi_L(x)$ and $\phi'_L(x)$ are positive for all $x$ in $\rr_+^*$, this shows that the sign of $\psi_T$ must remain constant and opposite to that of $1-\mu$ on $\rr_+^*$  (see \vref{lem:sol_forced_2nd_order} in appendix), and that $\psi_T$ vanishes identically if $\mu$ is equal to $1$. \Cref{prop:bar_c_toy} is proved. 
\end{proof}
\section{Bistable Lotka--Volterra competition model}
\label{sec:lv}
\subsection{Definition}
\label{subsec:lv_def}
\begin{figure}[!htbp]
	\centering
    \includegraphics[width=0.6\textwidth]{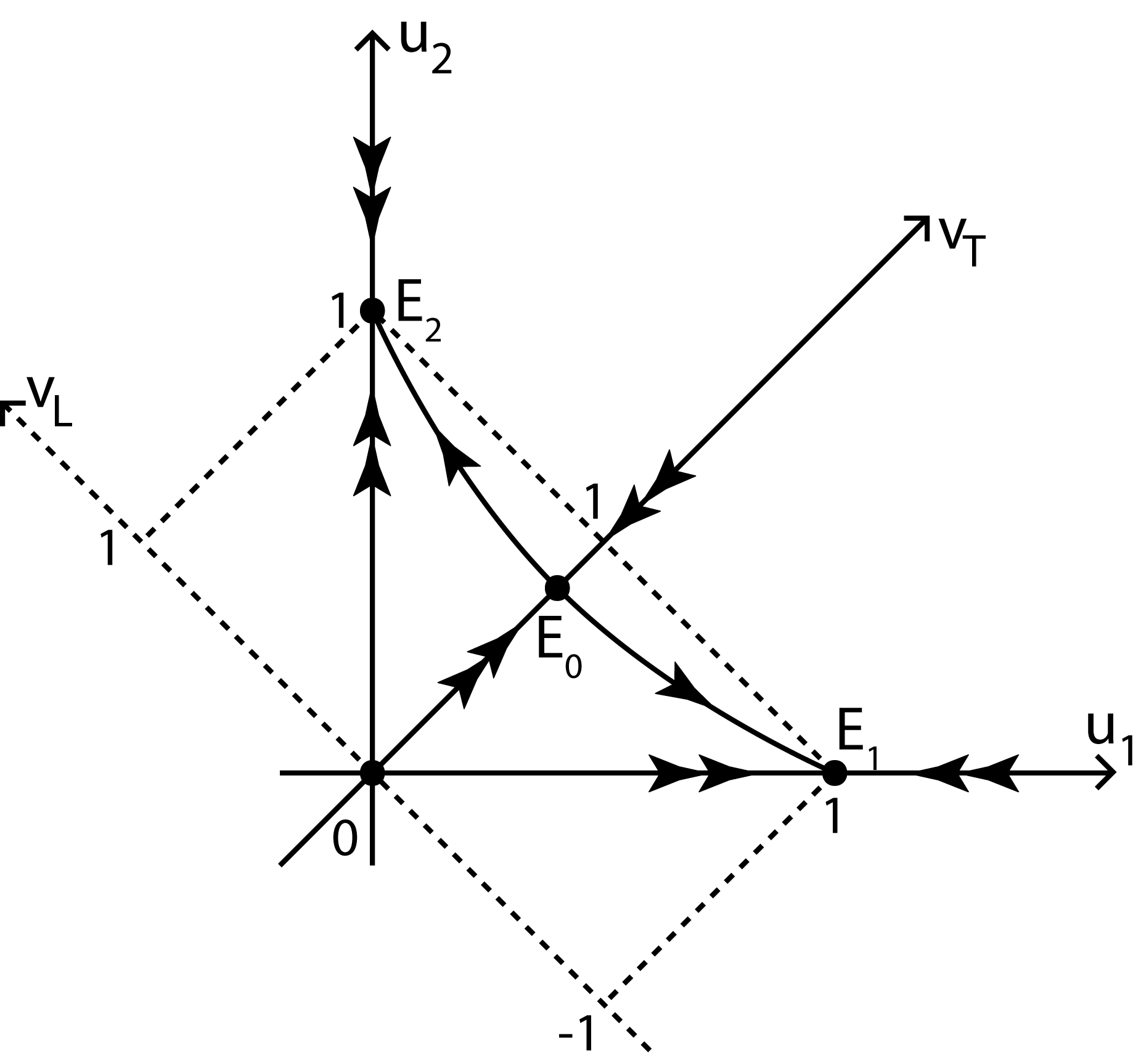}
    \caption{Phase space of the reaction system in the bistable case ($\mu>0$).}
    \label{fig:phase_lv}
\end{figure}
Let $\mu$ denote a real quantity (a parameter) and let us consider the following reaction-diffusion system, where the reaction term is known as the Lotka--Volterra competition model (see \cref{fig:phase_lv}): 
\begin{equation}
\label{syst_lv_u}
u_t = F_\mu(u) + u_{xx}
\quad\mbox{with}\quad
F_\mu(u)=F_\mu(u_1,u_2)=
\begin{pmatrix}
u_1\bigl(1-u_1-(1+\mu) u_2\bigr)\\  u_2\bigl(1-u_2-(1+\mu) u_1\bigr)
\end{pmatrix}
\,.
\end{equation}
Both axes $\{u_2=0\}$ and $\{u_1=0\}$ are invariant under the reaction differential system $u_t=F_\mu(u)$, and the restriction of this system to each of these axes is nothing but the logistic equation $w_t = w(1-w)$. 
The $u_1\leftrightarrow u_2$-symmetry $F_\mu\circ\sss=\sss F_\mu$ holds.  

Again in this \namecref{sec:lv}, we are going to use the ``transversal-longitudinal'' coordinate system $v=(v_T,v_L)$ defined exactly as in definition \vref{def_trans_long_coord}. 
Expressed in this coordinate system, the reaction-diffusion system \cref{syst_lv_u} takes the  form:
\begin{equation}
\label{syst_lv_v}
v_t=G_\mu(v)+v_{xx}
\quad\mbox{with}\quad
G_\mu(v)=G_\mu(v_T,v_L)=
\begin{pmatrix}
v_T-v_T^2+\frac{\mu}{2}(v_L^2-v_T^2)\\  v_L(1-v_T)
\end{pmatrix}
. 
\end{equation}
For all $\mu$ in $\rr\setminus\{-2,0\}$, the reaction system admits three equilibria aside from $(0,0)$ (see \cref{fig:phase_lv}):
\begin{itemize}
\item $E_1$, that is: $(u_1,u_2)=(1,0)\Leftrightarrow (v_T,v_L)=(-1,1)$,
\item $E_2$, that is: $(u_1,u_2)=(0,1)\Leftrightarrow (v_T,v_L)=(1,1)$,
\item $E_0$, that is: $(u_1,u_2)=\bigl(1/(2+\mu),1/(2+\mu)\bigr)\Leftrightarrow (v_T,v_L)=\bigl(1/(1+\mu/2),0\bigr)$.
\end{itemize}
The differential of the reaction system reads:
\[
DF_\mu(u_1,u_2)=
\begin{pmatrix}
1-2u_1-(1+\mu) u_2 & -(1+\mu)u_1 \\  -(1+\mu)u_2 & 1-2u_2-(1+\mu) u_1
\end{pmatrix}
\]
and
\[
DG_\mu(v_T,v_L)=
\begin{pmatrix}
1-(2+\mu)v_T & \mu v_L\\  -v_L & 1-v_T
\end{pmatrix}
\]
thus
\begin{equation}
\label{lin_asymot_lv}
DF_\mu(E_1) = 
\begin{pmatrix}
-1 & -1-\mu \\ 0 & -\mu
\end{pmatrix}
,\ 
DF_\mu(E_2) = 
\begin{pmatrix}
-\mu & 0 \\ -1-\mu & -1
\end{pmatrix}
,\ 
DG_\mu(E_0) = 
\begin{pmatrix}
-1 & 0 \\ 0 & \frac{\mu}{2+\mu}
\end{pmatrix}
.
\end{equation}
For $\mu$ negative, the reaction system is monostable ($E_0$ is stable and $E_2$ and $E_1$ are saddles) while for $\mu$ positive it is bistable ($E_2$ and $E_1$ are stable and $E_0$ is a saddle). From now on, it will be assumed that:
\[
\mu >0
\,,
\]
or in other words that the interspecific competition rate $1+\mu$ is higher than the intraspecific competition rate $1$, or in other words that the reaction system is bistable. 
Observe that (with the notation of \cref{sec:general}), the infinitesimal rotation of the vector field $G_\mu$ reads: 
\[
\rot G_\mu(v_T,v_L) = -(1+\mu)v_L
\,.
\]
Thus, by contrast with the toy example studied in the previous \namecref{sec:toy}, the sign of the shear along the trajectory of the expected front connecting $E_1$ to $E_2$ does not depend on the parameter $\mu$ (since $\mu$ is assumed to be positive). Thus in view of the observations made in the previous \namecref{sec:toy} we may expect that an increase of mobility is in this case always advantageous, in other words that the quantity $\bar c$ is positive. The aim of this \namecref{sec:lv} is to prove this statement when $\mu$ is altogether positive and small. 
\subsection{Standing front}
\label{subsec:lv_standing_front}
A smooth function 
\[
\phi:\rr\rightarrow\rr^2,\quad x\mapsto\phi(x) = \bigl(\phi_T(x),\phi_L(x)\bigr)
\]
is a stationary solution of system \cref{syst_lv_v} if it is a solution of
\begin{equation}
\label{syst_front_lv}
G_\mu(\phi)+\phi''=0
\Longleftrightarrow 
\left\{
\begin{aligned}
\phi_T'' &= \phi_T^2-\phi_T+\frac{\mu}{2}(\phi_T^2-\phi_L^2) \\ 
\phi_L'' &= \phi_L(\phi_T-1).
\end{aligned}
\right.
\end{equation}
Standing or travelling bistable fronts for systems including \cref{syst_lv_u} have been studied by many authors for a long time. Existence and asymptotic stability of a bistable (monotone) travelling front connecting $E_1$ to $E_2$ were first established (in a more general setting) by C. Conley and R. Gardner using topological methods and comparison principles, \cite{Gardner_exisStabVWDegreeTheoreticApproach_1982,ConleyGardner_applicationGeneralizedMorseIndexTW_1984}. 

In \cite{Kan-on_parameterDependencePropSpeedTW_1995,Kan-onFang_stabilityMonotoneTW_1996}, Y. Kan-on and Q. Fang proved the uniqueness of this bistable travelling front and its spectral stability (including its transversality/robustness that is the fact that the eigenvalue $0$ is simple) for Lotka--Volterra competition-diffusion systems (a class of systems including \cref{syst_lv_u} and governed by three reduced parameters aside of diffusion coefficients); from these spectral properties they recovered the asymptotic stability of this bistable front. In \cite{Kan-on_parameterDependencePropSpeedTW_1995}, Kan-on also proved the monotonicity of the speed of the bistable front with respect to the parameters characterizing the reaction system (but not with respect to the diffusion coefficients of the two species). Further insight into the sign of the speed of the front were achieved by J-S Guo and Y-C Lin, \cite{GuoLin_signWaveSpeed_2013}. However, here again their results are mainly concerned with the dependence of this sign with respect to the parameters of the reaction system, but not with respect to the diffusion coefficients of the two components, and little is stated about the (rather specific) case considered in this paper, where the reaction system is $u_1\leftrightarrow u_2$-symmetric, and where the sole breaking of this $u_1\leftrightarrow u_2$-symmetry comes from the diffusion coefficients. 

In \cite{GirardinNadin_tWRelativeMotilityInvasionSpeed_2015}, L. Girardin and G. Nadin also studied the dependence of the front speed with respect to the coefficients of the system, and this time espescially with respect to the diffusion coefficients of the two species. Their results hold when the parameters of the system approach certain limits; for the more restricted system \cref{syst_lv_u}, this corresponds to the limit when $\mu$ approaches $+\infty$ (in other words, when the interspecific competition rates approach $+\infty$). Their main ``Unity is not strength'' theorem states that, close to this limit, it is the most mobile species that dominates the other one. Surprisingly enough, this fits with the ``moderately strong'' competition case of the previous toy example, but not with the ``hard'' competition case where by contrast it was the less motile species that was dominant. 

Our purpose is to consider system \cref{syst_lv_u} when the parameter $\mu$ is positive and small (thus an asymptotics completely different from the one considered by Girardin and Nadin). By contrast with the previous toy example, the standing front connecting $E_1$ to $E_2$ is (to the knowledge of the author) not given by an explicit expressions. Note that explicit expressions for standing or travelling waves of Lotka--Volterra competition-diffusion systems have been provided by various authors, for instance by M. Rodrigo and M. Mimura in \cite{MimuraRodrigo_exactSolutionsCompetitionDiffusion_2000,MimuraRodrigo_exactSolRDSystemsAndNLWaveEqu_2001} or by N. Kudryashov and A. Zakharchenko in \cite{KudryashovZakharchenko_exactSolutionsLV_2015} (this latter concerning only the monostable case), but always under restrictions on the parameters, and in particular for specific values of the diffusion coefficients (not encompassing a full interval of values for the diffusion coefficients in \cref{syst_lv_u}).  

Our strategy will therefore be to assume that the parameter $\mu$ is small and use singular perturbation arguments to get a first order approximation (in terms of $\mu$) for the standing front (\vref{lem:exist_f_lv}). This will lead to a first order approximation (still in terms of $\mu$) for the quantity $\bar{c}$ we are interested in and in particular to its sign that will turn out to be positive (\vref{prop:bar_c_lv}). Before stating and proving these approximations, some notation is required. 
\subsection{Notation}
\label{subsec:not}
\begin{enumerate}
\item The estimates that will be computed in the remaining of this \namecref{subsec:not} will often involve the (small) quantity $\sqrt{\mu}$ (instead of $\mu$ itself). For this reason it will be convenient to have a specific notation for this quantity. Let us write:
\begin{equation}
\label{not_eps_mu}
\varepsilon = \sqrt{\mu}
\,.
\end{equation}
Note that this quantity $\varepsilon$ has nothing to do with the quantity $\epsilon$ introduced in \cref{sec:general} and displayed on \cref{fig:front_toy} (that one will not be used any more in the remaining of the paper).
\item The standing front and all related functions (for instance eigenfunctions) will turn out to depend slowly on the space variable $x$. For this purpose, it will sometimes be convenient to view them as functions of the space variable $y$ related to $x$ by:
\begin{equation}
\label{not_y_x}
y = \varepsilon x \Longleftrightarrow \frac{y}{\varepsilon} = x
\,.
\end{equation}
\item Up to an appropriate scaling, the standing front will be given at first order by the function 
\begin{equation}
\label{not_theta}
\theta:y\mapsto \tanh\Bigl(\frac{y}{2}\Bigr)
\,.
\end{equation}
(already encountered in the toy example of \cref{sec:toy}). This function is a solution of equation:
\[
\theta'' + \frac{1}{2}\theta(1-\theta^2)=0
\]
and the first order expansion along $\theta$ of this equation reads: $\ell \varphi =0$, where $\ell$ is the differential operator:
\begin{equation}
\label{not_ell}
\ell:\varphi\mapsto\varphi'' + \frac{1}{2}(1-3\theta^2)\varphi
\end{equation}
(compare with operator $\ell_{\mu}$ defined in \vref{lin_toy_v2}).
\item A notation is needed to deal with the remaining ``higher order terms'' (in $\varepsilon$) that will appear in the next computations. These higher order terms are slightly more involved than just real quantities. They depend on $\varepsilon$ and $x$, they vary slowly with respect to $x$, and they approach zero at an exponential rate when $x$ approaches $\pm\infty$. This approach to zero at infinity is important since integrals over the whole real line or half-real line will be made on various occasions. Let us define the space $\rrr$ of ``remaining terms'' as follows. A function
\[
r:(y,\varepsilon)\mapsto r(y,\varepsilon)
\]
belong to the set $\rrr$ if there exists a positive quantity $\delta$ such that:
\begin{itemize}
\item $r$ is defined and smooth on $\rr\times[0,\delta]$, 
\item for every integer $p$, the quantity 
\[
\sup_{(y,\varepsilon)\in\rr\times[0,\delta]} e^{\abs{y/2}}\abs{\partial_y^p r(y,\varepsilon)}
\]
is finite. 
\end{itemize}
\end{enumerate}
\subsection{Approximation of the standing front}
\label{subsec:exist_f_lv}
The following lemma makes use of the notation $\theta(\cdot)$ and $\rrr$ introduced above. Existence and uniqueness are stated in this lemma since they will be recovered automatically by the singular perturbation approach, but as mentioned above these results are well known,  \cite{Gardner_exisStabVWDegreeTheoreticApproach_1982,ConleyGardner_applicationGeneralizedMorseIndexTW_1984,Kan-on_parameterDependencePropSpeedTW_1995}. Therefore the main interest of this lemma is the approximation of the standing front that it provides. 
\begin{lemma}[existence of the standing front]
\label{lem:exist_f_lv}
For every positive and sufficiently small \\
quantity $\varepsilon$, the Lotka--Volterra system \cref{syst_lv_v} (written in transversal-longitudinal coordinates and where $\mu$ equals $\varepsilon^2$ --- see notation \cref{not_eps_mu}) admits a unique standing front 
\[
\phi_\varepsilon: \rr\rightarrow\rr^2, \quad x\mapsto \phi_\varepsilon(x) = \bigl(\phi_{\varepsilon,T}(x),\phi_{\varepsilon,L}(x)\bigr)
\]
connecting $E_1$ to $E_2$, taking its values in the first quadrant for the canonical coordinates (in other words such that $\phi_{\varepsilon,T}(x)$ is larger than $\abs{\phi_{\varepsilon,L}(x)}$ for every $x$ in $\rr$), and symmetric in the sense that:
\begin{itemize}
\item the fist component $x\mapsto\phi_{\varepsilon,T}(x)$ is even,
\item and the second component $x\mapsto\phi_{\varepsilon,L}(x)$ is odd.
\end{itemize}
In addition, there exist functions $r_T$ and $r_L$ in $\rrr$ such that, provided that $\varepsilon$ is small enough, for every real quantity $x$,
\[
\begin{aligned}
\phi_{\varepsilon,T}(x) &= 1 - \frac{\varepsilon^2}{2}\bigl(1-\theta(\varepsilon x)^2\bigl) + \varepsilon^3 r_T(\varepsilon x,\varepsilon)
\,,\\
\mbox{and}\quad\phi_{\varepsilon,L}(x) &= \theta(\varepsilon x) + \varepsilon r_L(\varepsilon x,\varepsilon)
\,.
\end{aligned}
\]
\end{lemma}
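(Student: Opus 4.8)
The plan is to treat this as a singular perturbation problem, using the parameter $\varepsilon=\sqrt{\mu}$ and the slow variable $y=\varepsilon x$. The scaling is forced by the structure of the system \cref{syst_front_lv}: the longitudinal equation $\phi_L''=\phi_L(\phi_T-1)$ suggests $\phi_T\approx 1$ (so that the right-hand side is small and $\phi_L$ varies slowly), while the transversal equation $\phi_T''=\phi_T^2-\phi_T+\frac{\mu}{2}(\phi_T^2-\phi_L^2)$ shows that $\phi_T$ relaxes rapidly (on the $O(1)$ scale in $x$) to a slaved value close to $1$, with a correction of order $\mu=\varepsilon^2$ driven by the $\frac{\mu}{2}(\phi_T^2-\phi_L^2)$ term. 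This is a classic fast--slow decomposition where the transversal direction is the fast (enslaved) variable and the longitudinal direction is the slow one.

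First I would substitute the ansatz $\phi_{\varepsilon,T}=1+\varepsilon^2 a(\varepsilon x,\varepsilon)$ and $\phi_{\varepsilon,L}=\theta(\varepsilon x)+\varepsilon^? b(\varepsilon x,\varepsilon)$ into \cref{syst_front_lv} and expand in powers of $\varepsilon$. Writing everything in the slow variable $y$ (so $\partial_x^2 = \varepsilon^2\partial_y^2$), the transversal equation at leading order gives the enslaved relation: the $O(\varepsilon^2)$ balance reads $0 = -2\varepsilon^2 a\cdot\tfrac{1}{2}+\tfrac{\varepsilon^2}{2}(1-\phi_L^2)+\cdots$, forcing $a(y,0)=-\tfrac{1}{2}(1-\theta(y)^2)$, which is exactly the claimed leading term $\phi_{\varepsilon,T}=1-\frac{\varepsilon^2}{2}(1-\theta^2)+O(\varepsilon^3)$. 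Substituting this slaved value back into the longitudinal equation $\varepsilon^2\phi_L''=\phi_L(\phi_T-1)=-\tfrac{\varepsilon^2}{2}\phi_L(1-\phi_L^2)+O(\varepsilon^3)$ and cancelling $\varepsilon^2$ yields, at leading order, precisely $\theta''+\tfrac{1}{2}\theta(1-\theta^2)=0$, the equation solved by $\theta=\tanh(y/2)$ recorded in \cref{not_theta}. This confirms the leading profiles; the correction $\phi_{\varepsilon,L}=\theta+\varepsilon r_L$ arises because the boundary conditions at the equilibria $E_1,E_2$ shift slightly with $\varepsilon$.

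To make this rigorous rather than merely formal, the natural tool is a Lyapunov--Schmidt reduction (or the implicit function theorem) set up around the approximate solution. I would define the error/residual obtained by plugging the truncated ansatz into the system, show it lies in $\varepsilon^{\text{(appropriate power)}}\cdot\rrr$, and then invert the linearised operator to solve for the remainders $r_T,r_L\in\rrr$. The linearisation in the longitudinal direction is governed by the operator $\ell$ of \cref{not_ell}, whose kernel is spanned by $\theta'$ (translation invariance) and which is Fredholm; the one-dimensional kernel is handled by fixing the translation via the symmetry requirement that $\phi_{\varepsilon,L}$ be odd, which kills the kernel direction and makes the remaining operator invertible on the odd subspace. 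The transversal linearisation is the operator $\partial_y^2\cdot\varepsilon^2 - 1+\cdots$, dominated by the invertible $-1$ term (reflecting that $E_1,E_2$ have a transversally attractive, $O(1)$-stable direction), so $r_T$ is solved directly. The uniqueness and the evenness/oddness symmetries follow from the $u_1\leftrightarrow u_2$-symmetry of the system together with the contraction/implicit-function argument, which produces a unique fixed point respecting the symmetry.

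\textbf{The main obstacle} will be the functional-analytic bookkeeping needed to show the remainders genuinely belong to $\rrr$: one must verify that the exponential weight $e^{|y/2|}$ is preserved under inversion of both linearised operators, i.e. that $\ell^{-1}$ (restricted to the odd complement of its kernel) and the transversal resolvent map exponentially-localised data to exponentially-localised solutions, with all derivative bounds uniform in $\varepsilon\in[0,\delta]$. This is where the exponential decay of $\theta'$ and the spectral gap of $\ell$ (its nonzero eigenvalues are negative, as in \cref{lem:lin_st_toy}) must be combined carefully, and where the interplay between the fast variable $x$ and slow variable $y$ has to be controlled so that no powers of $\varepsilon$ are lost when converting derivatives. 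Once these weighted estimates are in place, the fixed-point argument closes and delivers both the existence/uniqueness statement and the quantitative expansions with $r_T,r_L\in\rrr$.
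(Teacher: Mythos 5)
Your route is genuinely different from the paper's. The paper recasts the profile equations \cref{syst_front_lv} as the four-dimensional fast--slow system \cref{front_lv_dim4}, applies Fenichel's global center (slow) manifold theorem on the disk $D$, reduces to the perturbed planar equation \cref{f_lv_slow}, and then --- because no theorem guarantees that a \emph{global} center manifold can be chosen compatible with the reversibility and $\phi_L\leftrightarrow-\phi_L$ symmetries --- recovers existence and oddness of the heteroclinic by the Jordan-curve/trapping argument of \cref{lem:robust_het_con}. You instead propose a direct functional-analytic construction: formal expansion (your leading-order balances agree with the paper's, despite a garbled intermediate sign) followed by a Lyapunov--Schmidt/implicit-function argument in exponentially weighted spaces, with the translation degeneracy of $\ker\ell$ removed by working in the odd subspace. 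If carried out, this has a real advantage precisely where the paper struggles: symmetry is imposed through the function space, so the topological workaround for the non-symmetric global center manifold becomes unnecessary. Conversely, what Fenichel's theorem buys the paper is exactly the uniform-in-$\varepsilon$ control of the fast/slow interaction that you correctly identify as your ``main obstacle'' and leave undone.

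There is, however, a genuine gap beyond the acknowledged technical work: uniqueness. A contraction or implicit-function argument yields a front that is unique \emph{in a small neighbourhood of the ansatz in your weighted norm}, whereas the lemma asserts uniqueness in the global class of standing fronts connecting $E_1$ to $E_2$, lying in the first quadrant, with $\phi_{\varepsilon,T}$ even and $\phi_{\varepsilon,L}$ odd. Nothing in your scheme shows that an arbitrary front in this class is close to the ansatz, so the local uniqueness does not transfer. The paper bridges exactly this gap in two steps: the a priori bound of \cref{lem:a_priori_bound_phi_T} (obtained from differential inequalities, forcing $1-\varepsilon^2/2<\phi_T<1$ for \emph{every} first-quadrant front), and the Fenichel-theoretic fact that any trajectory remaining globally in a small neighbourhood of the slow manifold must lie on it, after which uniqueness follows from elementary planar phase-portrait considerations for \cref{f_lv_slow}. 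To close your version you would need an analogous a priori step --- bounds forcing $\phi_T=1+\ooo(\varepsilon^2)$ \emph{and} forcing $\phi_L$ to be uniformly close to $\theta(\varepsilon\,\cdot\,)$ --- before the fixed-point uniqueness applies; the second of these is not automatic and is not addressed in your proposal.
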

\begin{remark}
This lemma probably remains true without the additional constraint that the front must lie in the first quadrant for the canonical coordinates, but the formulation above is sufficient for our purpose. 
\end{remark}
\begin{proof}
Let $\varepsilon$ denote a (small) positive quantity. Replacing $\mu$ by $\varepsilon^2$, system \vref{syst_front_lv} governing stationary solutions of system \vref{syst_lv_v} reads:
\begin{equation}
\label{syst_front_lv_copy}
G_\mu(\phi)+\phi''=0
\Longleftrightarrow 
\left\{
\begin{aligned}
\phi_T'' &= \phi_T^2-\phi_T+\frac{\varepsilon^2}{2}(\phi_T^2-\phi_L^2) \\ 
\phi_L'' &= \phi_L(\phi_T-1)\,.
\end{aligned}
\right.
\end{equation}
The following intermediate lemma provides a priori bounds on the transversal component of the solution we are looking for. It is illustrated by \vref{fig:het_con}.
\begin{lemma}[a priori bound on the standing front]
\label{lem:a_priori_bound_phi_T}
Every global solution \\
$x\mapsto\bigl(\phi_T(x),\phi_L(x)\bigr)$ of system \cref{syst_front_lv_copy} connecting $E_1$ to $E_2$ and such that $\abs{\phi_L(\cdot)}$ is everywhere smaller than $\phi_T(\cdot)$ satisfies, for every real quantity $x$, 
\begin{equation}
\label{a_priori_bound_phi_T}
1-\frac{\varepsilon^2}{2} < \phi_T(x) < 1
\,.
\end{equation}
\end{lemma}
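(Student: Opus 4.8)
The plan is to prove both inequalities by a scalar maximum principle applied to the first component $\phi_T$ alone, using the constraint $\abs{\phi_L}<\phi_T$ to fix the signs in the equation
\[
\phi_T''=\phi_T^2-\phi_T+\frac{\varepsilon^2}{2}\bigl(\phi_T^2-\phi_L^2\bigr)
\]
from \cref{syst_front_lv_copy}. First I would record two preliminary observations. Since both $E_1$ and $E_2$ satisfy $v_T=u_1+u_2=1$, any front connecting them obeys $\phi_T(x)\to 1$ as $x\to\pm\infty$; and the strict inequality $\abs{\phi_L(x)}<\phi_T(x)$ forces $\phi_T(x)>\abs{\phi_L(x)}\ge 0$, so $\phi_T$ is everywhere positive. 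These two facts are exactly what make the extrema of $\phi_T$ occur at finite (interior) points and make the sign of the right-hand side above tractable at such points.

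For the upper bound $\phi_T<1$ I would argue by contradiction. If $\phi_T(x_1)\ge 1$ at some finite $x_1$, then since $\phi_T\to 1$ at $\pm\infty$ the supremum $M=\sup_\rr\phi_T\ge 1$ is attained at a finite global maximum $x_0$ (if $M>1$ this is clear; if $M=1$ then $\phi_T(x_1)=1=M$ already realizes it). There $\phi_T''(x_0)\le 0$, whereas the right-hand side equals $\phi_T(x_0)\bigl(\phi_T(x_0)-1\bigr)+\frac{\varepsilon^2}{2}\bigl(\phi_T(x_0)^2-\phi_L(x_0)^2\bigr)$, in which the first term is $\ge 0$ (as $\phi_T(x_0)\ge 1>0$) and the second is $>0$ (as $\abs{\phi_L(x_0)}<\phi_T(x_0)$). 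Hence $\phi_T''(x_0)>0$, a contradiction, so $\phi_T<1$ everywhere.

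For the lower bound I would examine a global minimum. Because $\phi_T\to 1$ at infinity and $\phi_T<1$ everywhere, the infimum is $<1$ and is attained at a finite point $x_0$ with $\phi_T''(x_0)\ge 0$. The key estimate is that $0<\phi_T<1$ gives $\phi_T^2-\phi_L^2\le\phi_T^2<\phi_T$, so at $x_0$
\[
0\le\phi_T''(x_0)<\phi_T(x_0)\bigl(\phi_T(x_0)-1\bigr)+\frac{\varepsilon^2}{2}\phi_T(x_0)=\phi_T(x_0)\Bigl(\phi_T(x_0)-1+\frac{\varepsilon^2}{2}\Bigr).
\]
Since $\phi_T(x_0)>0$, this forces $\phi_T(x_0)-1+\frac{\varepsilon^2}{2}>0$, i.e. $\phi_T(x_0)>1-\frac{\varepsilon^2}{2}$; as $x_0$ realizes the minimum, $\phi_T>1-\frac{\varepsilon^2}{2}$ everywhere.

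The only delicate points, and therefore where I would be most careful in writing it up, are the attainment of the extrema at finite points (which is precisely what the convergence $\phi_T\to 1$ at $\pm\infty$ guarantees, after checking that the extremal value differs from, or is a genuinely attained value equal to, the common limit $1$) and the strictness of the inequalities, which rests on $\abs{\phi_L}<\phi_T$ being strict (giving $\phi_T^2-\phi_L^2>0$) and on $\phi_T<1$ (giving $\phi_T^2<\phi_T$). No comparison function or invariant-region machinery is needed; the entire proof is a sign analysis of the right-hand side of the $\phi_T$-equation evaluated at an extremum.
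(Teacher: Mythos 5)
Your proof is correct and is essentially the paper's own argument made explicit: the paper derives the two-sided differential inequality $\phi_T(\phi_T-1)\le\phi_T''\le\phi_T\bigl((1+\varepsilon^2/2)\phi_T-1\bigr)$ from $0\le\phi_L^2\le\phi_T^2$ and then concludes from $\phi_T\to 1$ at $\pm\infty$, which is precisely the sign analysis at attained extrema that you carry out. The only (harmless) differences are that you spell out the attainment of the extrema and the strictness of the inequalities, which the paper leaves implicit, and that your minimum-point estimate yields the bound $1-\varepsilon^2/2$ directly rather than the slightly sharper $\phi_T>1/(1+\varepsilon^2/2)$ implicit in the paper's right-hand inequality.
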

\begin{proof}[Proof of \cref{lem:a_priori_bound_phi_T}]
According to the first equation of system \cref{syst_front_lv_copy}, every such solution satisfies the differential inequalities
\[
\phi_T(\phi_T-1) \le \phi_T'' \le \phi_T\bigl( (1+\varepsilon^2/2) \phi_T - 1 \bigr)
\,.
\]
Since $\phi_T(x)$ must approach $1$ when $x$ approaches $\pm\infty$, it follows from the left-hand inequality that, for every real quantity $x$, 
\[
\phi_T(x) < 1
\,,
\]
and from the right-hand inequality that, for every real quantity $x$, 
\[
(1+\varepsilon^2/2) \phi_T(x)> 1
\,.
\]
Inequalities \cref{a_priori_bound_phi_T} follow. \Cref{lem:a_priori_bound_phi_T} is proved. 
\end{proof}
Let us pursue the proof of \cref{lem:exist_f_lv}.
According to \cref{lem:a_priori_bound_phi_T} it is natural to express system \cref{syst_front_lv_copy} in terms of the function $\eta_T$ defined by:
\[
\phi_T=1+\varepsilon^2\eta_T
\,.
\]
With this notation system \cref{syst_front_lv_copy} becomes
\[
\left\{
\begin{aligned}
\eta_T'' &= \eta_T+\frac{1}{2}(1-\phi_L^2)+\varepsilon^2\bigl(\eta_T+\eta_T^2(1+\varepsilon^2/2)\bigr) \\  
\phi_L'' &= \varepsilon^2\eta_T\phi_L 
\end{aligned}
\right.
\]
Using the notation
\[
\tilde\eta_T=\eta_T'
\quad\mbox{and}\quad
\tilde\phi_L=\frac{1}{\varepsilon}\phi'_L
\,,
\]
the previous system becomes
\begin{equation}
\label{front_lv_dim4}
\left\{
\begin{aligned}
\eta_T' &= \tilde\eta_T \\ 
\tilde\eta_T' &= \eta_T+\frac{1}{2}(1-\phi_L^2)+\varepsilon^2 \bigl(\eta_T+\eta_T^2(1+\varepsilon^2 /2)\bigr) \\  
\phi_L' &= \varepsilon\tilde\phi_L \\ 
\tilde\phi_L' &= \varepsilon\eta_T\phi_L
\end{aligned}
\right.
\end{equation}
This system is appropriate for a singular perturbation argument. It converges when $\varepsilon$ approaches $0$ to the ``fast'' system
\begin{equation}
\label{front_lv_fast}
\left\{
\begin{aligned}
\eta_T' &= \tilde\eta_T \\ 
\tilde\eta_T' &= \eta_T+\frac{1}{2}(1-\phi_L^2) \\  
\phi_L' &= 0 \\ 
\tilde\phi_L' &= 0
\end{aligned}
\right.
\end{equation}
for which the two-dimensional set
\[
\Sigma_0=\bigl\{(\eta_T,\tilde\eta_T,\phi_L,\tilde\phi_L)\in\rr^4: \eta_T=-\frac{1}{2}(1-\phi_L^2),\quad \tilde\eta_T=0\bigr\}
\]
is entirely made of equilibrium points. The matrix of the linearisation of the ``fast'' system \cref{front_lv_fast} at every point of $\Sigma_0$ reads: 
\[
\begin{pmatrix}
0&1&0&0\\  1&0&-\phi_L&0\\  0&0&0&0\\  0&0&0&0
\end{pmatrix}
.
\]
Its eigenvalues are $-1$, $+1$, and zero with multiplicity two. Thus the dynamics of the fast system~\cref{front_lv_fast} is ``transversely hyperbolic'' at every point of the equilibrium manifold $\Sigma_0$. This is the required hypothesis to apply the singular perturbation machinery. The set $\Sigma_0$ is the graph of the function 
\[
H_0:\rr^2\longrightarrow\rr^2, \quad (\phi_L,\tilde\phi_L)\longmapsto(\eta_T,\tilde\eta_T) = \Bigl(-\frac{1}{2}(1-\phi_L^2),0\Bigr)
\,.
\]
Let us consider the following subset of $\rr^2$:
\begin{equation}
\label{def_of_domain_D_lv}
D = \dd(0,2) = \bigl\{ (\phi_L,\tilde\phi_L)\in\rr^2 : \phi_L^2 + \tilde\phi_L^2 \le 4 \bigr\}
\,.
\end{equation}
We are going to apply Fenichel's global center manifold theorem \cite{Fenichel_geomSingPert_1979,Jones_geometricSingularPerturbationTheory_1995,Kaper_introductionGeometricMethodsSingularPerturbation_1999} with this set $D$ as definition set of the maps provided by this theorem.
The properties of this set that will be used are:
\begin{itemize}
\item it is compact, simply connected, with a smooth boundary,
\item its interior contains the trajectories of the heteroclinic connections \\ $y\mapsto\bigl(\theta(y),\pm\theta'(y)\bigr)$ (see \cref{fig:het_con}).
\end{itemize}
According to Fenichel's global center manifold theore, for every $\varepsilon$ sufficiently close to zero, there exists a map 
\[
H_\varepsilon: D\rightarrow\rr^2
\]
such that the graph of $H_\varepsilon$ (denoted by $\Sigma_\varepsilon$) is locally invariant under the dynamics of~\cref{front_lv_dim4}; this means that a solution with an initial condition on $\Sigma_\varepsilon$ remains on $\Sigma_\varepsilon$ as long as $(\phi_L,\tilde\phi_L)$ remains in $D$. Moreover, the map $H_\varepsilon$ coincides when $\varepsilon$ equals zero with the previous definition of $H_0$, and $H_\varepsilon$ depends smoothly on $\varepsilon$. Thus there exist smooth functions $h$ and $\tilde h$ of three variables, defined in a neighbourhood of $D\times\{0\}$ in $\rr^3$, such that for every $(\phi_L,\tilde\phi_L)$ in $D$ and $\varepsilon$ sufficiently small, 
\[
H_\varepsilon(\phi_L,\tilde\phi_L)=\Bigl(-\frac{1}{2}(1-\phi_L^2)+\varepsilon h(\phi_L,\tilde\phi_L,\varepsilon), \varepsilon \tilde h(\phi_L,\tilde\phi_L,\varepsilon)\Bigr)
\,.
\]
To study the ``slow'' dynamics in system~\cref{front_lv_dim4}, it is convenient to introduce some notation. 
According to \cref{not_y_x}, let us write:
\[
y=\varepsilon x \ \Leftrightarrow\  x = y/\varepsilon
\quad\mbox{and}\quad
\Phi_L(y) = \phi_L(y/\varepsilon)
\ \Leftrightarrow\ 
\Phi_L(\varepsilon x ) = \phi_L(x)
\,.
\]
With this notation, the two last equations of system~\cref{front_lv_dim4} reduce to: 
\[
\Phi_L''=\eta_T\Phi_L
\]
thus the law governing the dynamics of system~\cref{front_lv_dim4} on the ``slow'' manifold $\Sigma_\varepsilon$ reduces to: 
\begin{equation}
\label{f_lv_slow}
\Phi_L'' = -\frac{1}{2}\Phi_L(1-\Phi_L^2)+\varepsilon \Phi_L h(\Phi_L,\Phi_L',\varepsilon )
\,.
\end{equation}
At the limit $\varepsilon$ equals $0$, this equation becomes
\begin{equation}
\label{f_lv_slow_lim}
\Phi_L'' = -\frac{1}{2}\Phi_L(1-\Phi_L^2)
\,.
\end{equation}
The asymptotic equation~\cref{f_lv_slow_lim} admits two hyperbolic equilibria 
\[
(\Phi_L,\Phi_L')=(- 1,0)
\quad\mbox{and}\quad
(\Phi_L,\Phi_L')=(1,0)
\]
and two heteroclinic solutions connecting them, which are explicitly given by: 
\begin{equation}
\label{het_con_lv}
y\mapsto\pm\tanh\Bigl(\frac{y}{2}\Bigr)=\pm\theta(y)
\end{equation}
(see \cref{fig:het_con}).
\begin{figure}[!htbp]
	\centering
    \includegraphics[width=\textwidth]{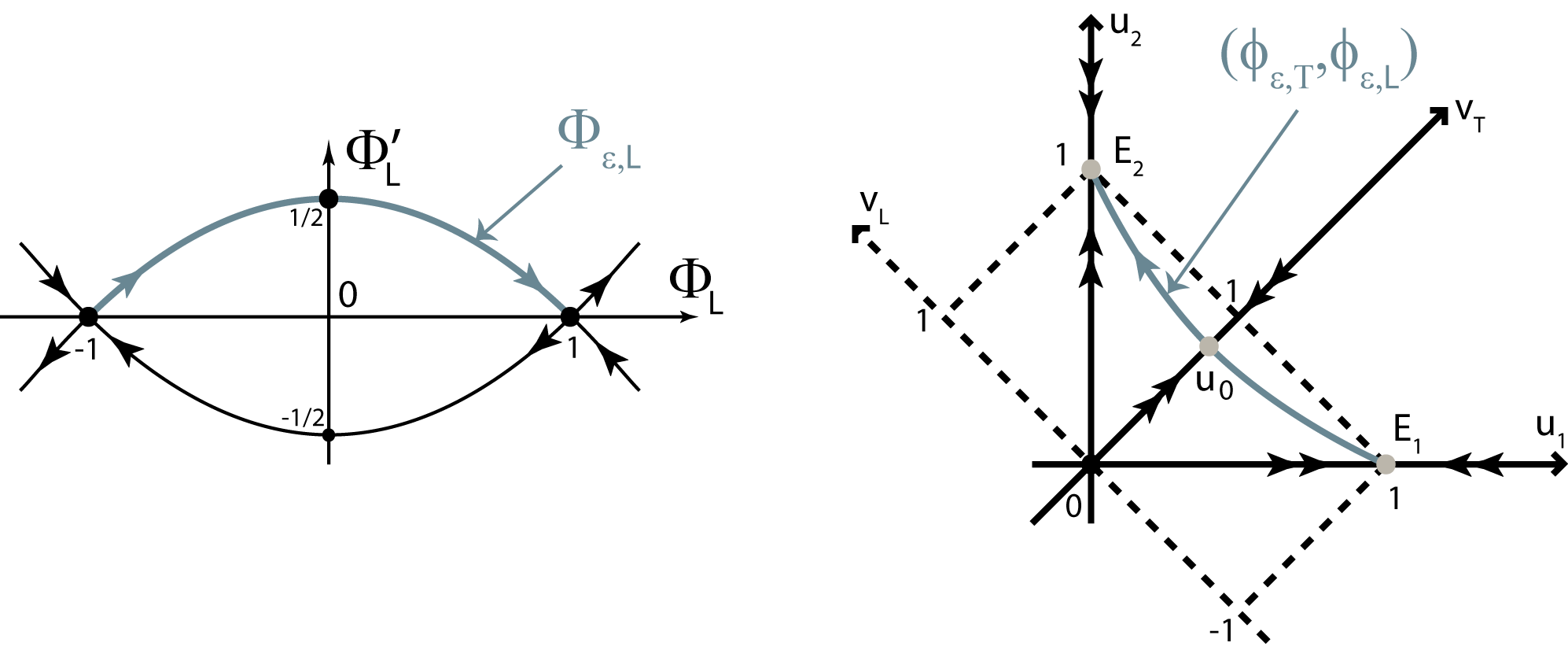}
    \caption{Heteroclinic connections in the phase space of equation~\cref{f_lv_slow} or~\cref{f_lv_slow_lim}, and corresponding standing front for the considered system.}
    \label{fig:het_con}
\end{figure}
Using the symmetries of the ``full'' system \cref{front_lv_dim4}, we are going to prove that these two heteroclinic connections persist for the (perturbed) reduced equation \cref{f_lv_slow} and remain symmetric with respect to the $\phi_L\leftrightarrow -\phi_L$ symmetry inherited from the $u_1\leftrightarrow u_2$ symmetry of the initial system. 

First, let us observe that for every sufficiently small positive quantity $\varepsilon$, the (perturbed) equation \cref{f_lv_slow} must admit two hyperbolic equilibria $(E_{-,\varepsilon},0)$ and $(E_{+,\varepsilon},0)$, with $E_{-,\varepsilon}$ close to $-1$ and $E_{+,\varepsilon}$ close to $1$. Let us mention here that, as usual with central manifolds, the slow manifold $\Sigma_\varepsilon$ is not necessarily unique, but it must contain every trajectory that remains globally in a small neighbourhood of it (\cite{Fenichel_geomSingPert_1979,Jones_geometricSingularPerturbationTheory_1995,Kaper_introductionGeometricMethodsSingularPerturbation_1999}). Therefore it must contain the equilibria corresponding to $E_1$ and $E_2$. It follows that $E_{-,\varepsilon}-=-1$ and $E_{+,\varepsilon}=+1$, in other words:
\begin{equation}
\label{equil_glob_cent_lv}
h(-1,0,\varepsilon)= h(-1,0,\varepsilon) =0
\,.
\end{equation}

Now, the robustness of the heteroclinic connections \cref{het_con_lv} is asserted by the following intermediate lemma.
\begin{lemma}[robustness of heteroclinic connections]
\label{lem:robust_het_con}
For every sufficiently small positive quantity $\varepsilon$, there exists a global solution
\[
y\mapsto\Phi_{\varepsilon,L}(y)
\]
of the reduced equation \cref{f_lv_slow} such that
\[
\Phi_{\varepsilon,L}(y) \rightarrow -1
\quad\mbox{when}\quad
y\rightarrow -\infty
\quad\mbox{and}\quad
\Phi_{\varepsilon,L}(y) \rightarrow +1
\quad\mbox{when}\quad
y\rightarrow +\infty 
\,, 
\]
and, for every real quantity $y$,
\[
\Phi_{\varepsilon,L}(-y) = -\Phi_{\varepsilon,L}(y)
\,.
\]
\end{lemma}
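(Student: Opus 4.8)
The plan is to obtain $\Phi_{\varepsilon,L}$ by a reversible shooting argument: the $u_1\leftrightarrow u_2$-symmetry of the original system, combined with the reflection $x\mapsto -x$ of the standing-front equation, endows the reduced flow \cref{f_lv_slow} with a reversibility whose symmetric orbits are precisely the odd heteroclinic connections we seek. First I would record the relevant structure on the full four-dimensional system \cref{front_lv_dim4}. A direct computation shows that the involution
\[
R:(\eta_T,\tilde\eta_T,\phi_L,\tilde\phi_L)\longmapsto(\eta_T,-\tilde\eta_T,-\phi_L,\tilde\phi_L)
\]
reverses the vector field of \cref{front_lv_dim4} (that is, if $w(y)$ is a solution then so is $y\mapsto R\,w(-y)$); this reflects the fact that $x\mapsto\phi_T(-x)$ is even and $x\mapsto\phi_L(-x)$ is odd for the standing front.

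Next I would transfer this reversibility to the reduced equation. Since the global center manifold $\Sigma_\varepsilon$ must contain every orbit that stays in a fixed neighbourhood of it, and $R$ maps such orbits to such orbits, $\Sigma_\varepsilon$ may be taken $R$-invariant. On $\Sigma_\varepsilon$, parametrized by $(\phi_L,\tilde\phi_L)=(\Phi_L,\Phi_L')$, the involution $R$ acts as $(\Phi_L,\Phi_L')\mapsto(-\Phi_L,\Phi_L')$; equivalently, the function $h$ is even in its first argument. Consequently the reduced equation \cref{f_lv_slow} is reversible under $\rho:y\mapsto -y$, $\Phi_L\mapsto-\Phi_L$: if $\Phi_L(\cdot)$ solves \cref{f_lv_slow} then so does $y\mapsto-\Phi_L(-y)$. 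The fixed-point set of the associated phase-plane involution is the line $\{\Phi_L=0\}$, and a $\rho$-symmetric orbit is exactly one that crosses this line.

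I would then set up the shooting at $\varepsilon=0$. The limiting equation \cref{f_lv_slow_lim} has hyperbolic equilibria $(\pm1,0)$ joined by the explicit connection $\theta(y)=\tanh(y/2)$ of \cref{het_con_lv}, whose orbit $\{(\theta(y),\theta'(y))\}$ is the unstable manifold $W^u(-1,0)$ and meets the symmetry line $\{\Phi_L=0\}$ at the single point $(0,\theta'(0))=(0,1/2)$, transversally (the tangent there is horizontal, since $\theta''(0)=0$). By \cref{equil_glob_cent_lv} the equilibria persist exactly at $(\pm1,0)$ and stay hyperbolic for small $\varepsilon$, and $W^u(-1,0)$ depends smoothly on $\varepsilon$; hence for $\varepsilon$ small it still crosses $\{\Phi_L=0\}$ transversally at a point $(0,a_\varepsilon)$ with $a_\varepsilon$ near $1/2$. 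Let $\Phi_{\varepsilon,L}$ be the solution of \cref{f_lv_slow} with $\bigl(\Phi_{\varepsilon,L}(0),\Phi_{\varepsilon,L}'(0)\bigr)=(0,a_\varepsilon)$. For $y\to-\infty$ it lies on $W^u(-1,0)$ and tends to $-1$; since it meets the fixed set at $y=0$, reversibility forces $\Phi_{\varepsilon,L}(-y)=-\Phi_{\varepsilon,L}(y)$, so for $y\to+\infty$ it runs along $\rho\bigl(W^u(-1,0)\bigr)=W^s(+1,0)$ and tends to $+1$. This produces the odd heteroclinic asserted by the lemma.

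The main obstacle is the second step: one must justify that the center manifold $\Sigma_\varepsilon$ can genuinely be chosen $R$-invariant (so that the reversibility descends to \cref{f_lv_slow} and $h$ is even in $\phi_L$), rather than merely $R$-invariant up to the exponentially small non-uniqueness of center manifolds. This is handled by invoking the characterization of $\Sigma_\varepsilon$ as the set of orbits that remain bounded in the relevant region, together with the $R$-equivariance of that characterization. The smooth $\varepsilon$-dependence of $W^u(-1,0)$ and the persistence of the transversal crossing are then routine consequences of standard invariant-manifold theory.
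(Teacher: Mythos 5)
Your overall strategy --- transfer the reversibility of the full system \cref{front_lv_dim4} to the reduced equation \cref{f_lv_slow}, then shoot from the unstable manifold of $(-1,0)$ to the symmetry line $\{\Phi_L=0\}$ --- is precisely the route the paper considers and then abandons, and the step your proof rests on contains a genuine gap. You justify the $R$-invariance of $\Sigma_\varepsilon$ by ``the characterization of $\Sigma_\varepsilon$ as the set of orbits that remain bounded in the relevant region''. No such characterization holds: Fenichel's global slow manifold is only \emph{locally} invariant, orbits on it may leave the domain $D$ of \cref{def_of_domain_D_lv} in finite time, and the correct statement is only an inclusion --- every orbit remaining for all (forward and backward) time in a small neighbourhood of $\Sigma_\varepsilon$ must lie \emph{in} $\Sigma_\varepsilon$. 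This inclusion makes the \emph{set of globally bounded orbits} invariant under the symmetries, but it does not make $\Sigma_\varepsilon$ itself $R$-invariant: $R(\Sigma_\varepsilon)$ is merely another admissible slow manifold, in general different from (though exponentially close to) $\Sigma_\varepsilon$. Consequently $h$ need not be even in $\phi_L$, the reduced equation \cref{f_lv_slow} need not be reversible, and your shooting step collapses: the orbits of the reduced unstable manifold of $(-1,0)$ are only \emph{backward} bounded, so the inclusion does not apply to them, and their $R$-images may lie on $R(\Sigma_\varepsilon)\setminus\Sigma_\varepsilon$. Note also that hitting $\{\Phi_L=0\}$ in the reduced plane does not mean the corresponding full orbit meets $\mathrm{Fix}(R)=\{\tilde\eta_T=0,\ \phi_L=0\}$: at the crossing point the graph coordinate $\tilde\eta_T=\varepsilon\tilde h(0,a_\varepsilon,\varepsilon)$ has no reason to vanish --- precisely because $\Sigma_\varepsilon$ is not known to be symmetric. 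The paper says explicitly that no theorem producing \emph{global} centre manifolds compatible with reversibility and equivariance is available in the literature (the classical symmetric-centre-manifold results are local), which is why it proceeds differently.

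The paper's actual argument uses only the correct inclusion property. Assume, for contradiction, that neither pair of separatrices of the reduced equation coincides; a Jordan-curve argument in the plane then shows that at least one of the four separatrices is ``trapped'' in $D$ for all forward and backward time. The corresponding orbit of the full system \cref{front_lv_dim4} is globally bounded near $\Sigma_\varepsilon$, and so are its three images under the reversibility and the $\phi_L\leftrightarrow-\phi_L$ equivariance of the full system; by the inclusion property all four orbits must therefore lie on the two-dimensional manifold $\Sigma_\varepsilon$, and their mutual position there yields a topological contradiction. The symmetry of the resulting connection is then recovered by the same argument. This is the idea your proof is missing: symmetry is obtained only for the distinguished globally bounded orbits, never for the manifold $\Sigma_\varepsilon$ itself. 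If you wanted to keep a genuine shooting proof, you would have to run it in the full four-dimensional reversible system, where $\mathrm{Fix}(R)$ is a plane and the unstable manifold of the equilibrium is two-dimensional, and supply singular-perturbation estimates to show these actually intersect --- a substantially different and heavier argument than the one you sketch.
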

\begin{proof}
The ``full'' system \cref{front_lv_dim4} admits two symmetries, the reversibility $x\leftrightarrow -x$ and the $\phi_L\leftrightarrow -\phi_L$ symmetry inherited from the $u_1\leftrightarrow u_2$ symmetry of the initial system. To be more precise, according to these two symmetries, if 
\[
x\mapsto\bigl(\eta_T(x),\tilde\eta_T(x),\phi_L(x), \tilde\phi_L(x)\bigr)
\]
is a solution of system \cref{front_lv_dim4}, then the following two functions are also solutions:
\[
\begin{aligned}
& x\mapsto\bigl(\eta_T(-x),-\tilde\eta_T(-x),\phi_L(-x), -\tilde\phi_L(-x)\bigr) \\
\mbox{and}\quad
& x\mapsto\bigl(\eta_T(x),\tilde\eta_T(x),-\phi_L(x), -\tilde\phi_L(x)\bigr)
\end{aligned}
\]
It is well known that \emph{local} center manifolds of systems admitting equivariant or reversibility symmetries can be chosen in such a way that those manifolds be themselves invariant under these symmetries --- note that since center manifolds are not necessarily unique this is however not obvious --- and as a consequence in such a way that the reduced systems (obtained by reduction of the initial systems to those symmetric local center manifolds) still admit the same symmetries as the initial system. See \cite{Ruelle_bifurcationPresenceSymmetryGroup_1973} and \cite{AdelmeyerIooss_topicsBifurcationTheoryApplications_1992,HaragusIooss_localBifurcationsInfiniteDim_2011} for more recent expositions, the last one concerning infinite dimensional dynamical systems. If a similar result could be invoked for \emph{global} center manifolds, we would be able to choose our global center manifold $\Sigma_\varepsilon$ in such a way that it is invariant under the two symmetries of the full system \cref{front_lv_dim4}, namely in such a way that:
\begin{itemize}
\item $h(\phi_L,\tilde\phi_L,\varepsilon)$ is even with respect to $\phi_L$, 
\item $h(\phi_L,\tilde\phi_L,\varepsilon)$ is even with respect to $\tilde\phi_L$, 
\item $\tilde h(\phi_L,\tilde\phi_L,\varepsilon)$ is odd with respect to $\phi_L$,
\item $\tilde h(\phi_L,\tilde\phi_L,\varepsilon)$ is odd with respect to $\tilde\phi_L$,
\end{itemize}
and as a consequence the reduced equation \cref{f_lv_slow} would admit the same two symmetries, and the conclusions of \cref{lem:robust_het_con} would immediately follow from these symmetries. Unfortunately, to the knowledge of the author, no statement concerning the existence of global center manifolds satisfying reversibility and equivariant symmetries and applicable in our case is available in the existing litterature. However we are going to recover this symmetry for the aforementioned heteroclinic connections by another (less direct) argument. 
\begin{figure}[!htbp]
	\centering
    \includegraphics[width=0.6\textwidth]{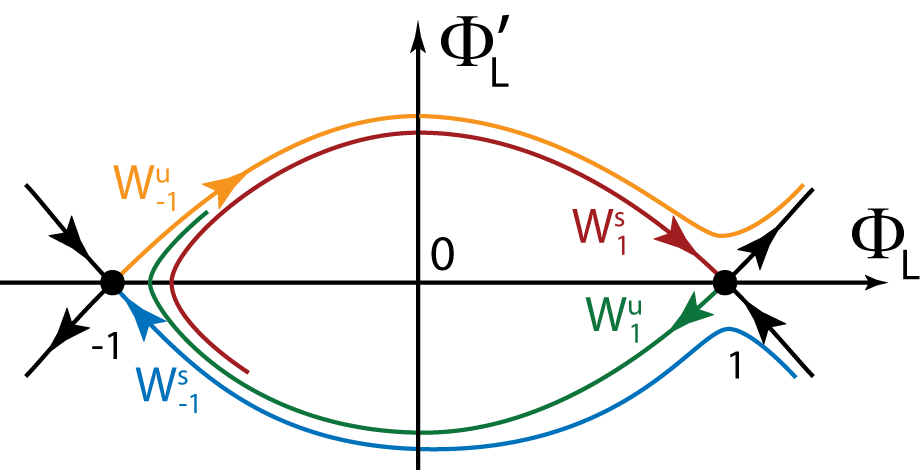}
    \caption{The four trajectories $\WuMinusOne$ and $\WsOne$ and $\WuOne$ and $\WsMinusOne$. As argued in the proof of \cref{lem:robust_het_con}, if these four trajectories differ, one of them (in this case $\WsOne$) is ``trapped'' by the other ones.}
    \label{fig:stab_unstab_man}
\end{figure}

Let us fix $\varepsilon$ (positive, small) and let us consider the four trajectories $\WuMinusOne$ and $\WsOne$ and $\WuOne$ and $\WsMinusOne$ depicted on \cref{fig:stab_unstab_man} (they are part of the stable and unstable manifolds of $(-1,0)$ and $(1,0)$ for the reduced equation \cref{f_lv_slow} for this value of $\varepsilon$). Let us proceed by contradiction and assume that $\WuMinusOne$ and $\WsOne$ do not coincide and that $\WuOne$ and $\WsMinusOne$ do not coincide. Then, by a Jordan curve argument (see \cref{fig:stab_unstab_man}), we see that at least one of those four trajectories does remains ``trapped'' (by the three others) in the domain $D$ defined in \cref{def_of_domain_D_lv}. Let 
\[
x\mapsto\bigl(\eta_T(x),\tilde\eta_T(x),\phi_L(x), \tilde\phi_L(x)\bigr)
\]
denote a solution of the full system \cref{front_lv_dim4} corresponding to this trajectory. Then, due to the symmetries of this full system \cref{front_lv_dim4}, the three functions
\[
\begin{aligned}
& x\mapsto\bigl(\eta_T(-x),-\tilde\eta_T(-x),\phi_L(-x), -\tilde\phi_L(-x)\bigr) \\
\mbox{and}\quad
& x\mapsto\bigl(\eta_T(x),\tilde\eta_T(x),-\phi_L(x), -\tilde\phi_L(x)\bigr) \\
\mbox{and}\quad
& x\mapsto\bigl(\eta_T(-x),-\tilde\eta_T(-x),-\phi_L(-x), \tilde\phi_L(-x)\bigr)
\end{aligned}
\]
are still solutions of the same system, and these three additional solutions still globally remain in a small neighbourhood of the center manifold $\Sigma_\varepsilon$. As a consequence, theses three additional solutions must also belong to $\Sigma_\varepsilon$, leading to a topological contradiction, see \cref{fig:stab_unstab_man}. 

Thus at least one among the two pairs $(\WuMinusOne,\WsOne)$ and $(\WuOne,\WsMinusOne)$ must be reduced to a single trajectory, and by a similar argument this must actually be the case for both pairs. This proves the existence of the two heteroclinic connections. Their symmetries follows from the same argument. \Cref{lem:robust_het_con} is proved.
\end{proof}
Let us define the function $r_L:(y,\varepsilon)\mapsto r_L(y,\varepsilon)$ by:
\[
\Phi_{\varepsilon,L}(y)=\theta(y)+\varepsilon r_L(y,\varepsilon)
\,.
\]
Since the eigenvalues of equilibria $(-1,0)$ and $(1,0)$ of equation \cref{f_lv_slow} are close to $-1$ and $+1$, the ``remaining'' function $r_L$ belongs to the space $\rrr$ defined in \cref{subsec:not}. 
Let us define the function
\[
\phi_\varepsilon:\rr\rightarrow\rr^2,
\quad
x\mapsto\bigl(\phi_{\varepsilon,T}(x),\phi_{\varepsilon,L}(x)\bigr)
\]
by:
\begin{equation}
\label{expr_phi}
\left\{
\begin{aligned}
\phi_{\varepsilon,T}(x) 
&= 1 - \frac{\varepsilon^2}{2}(1-\phi_{\varepsilon,L}^2) + \varepsilon^3 h\bigl( \Phi_{\varepsilon,L}(\varepsilon x) , \Phi_{\varepsilon,L}'(\varepsilon x),\varepsilon \bigr) 
\,, \\
\phi_{\varepsilon,L}(x) 
&= \Phi_{\varepsilon,L}(\varepsilon x) 
\,.
\end{aligned}
\right.
\end{equation}
This function is a standing front connecting $E_1$ to $E_2$, it is $u_1\leftrightarrow u_2$-symmetric (in other words $\phi_\varepsilon(-x)$ equals $\sss\phi_\varepsilon(x)$ for every $x$ in $\rr$), and it takes its values in the ``first quadrant'' $\phi_T>\abs{\phi_L}$. In addition, if we define the ``remaining'' function $r_T$ by:
\[
\phi_{\varepsilon,T}(x) = 1 - \frac{\varepsilon^2}{2}\bigl(1-\theta(\varepsilon x)^2\bigr) + \varepsilon^3 r_T(\varepsilon x,\varepsilon)
\]
then, according to equalities \cref{equil_glob_cent_lv}, this function $r_T$ belongs to $\rrr$. The proof of \cref{lem:exist_f_lv} is thus complete.
\end{proof}
\subsection{First-order variation of the front speed}
\label{subsec:1rst_ord}
With the notation of \cref{sec:general}, the diffusion matrix $\ddd$ equals identity. Let $\varepsilon$ denote a positive quantity, sufficiently small so that \vref{lem:exist_f_lv} holds, and let us consider the standing front $\phi_\varepsilon(\cdot)$ provided by this lemma. 
\begin{lemma}[spectral stability of the standing front]
\label{lem:lin_stab_lv}
{\ }\\
The standing front
\[
x\mapsto\phi_\varepsilon (x)
\]
is spectrally stable (in the sense of \vref{subsec:spec_stab_gen}, that is including the fact that the eigenvalue $0$ has an algebraic multiplicity equal to $1$) for the Lotka--Volterra system \cref{syst_lv_v}. 
\end{lemma}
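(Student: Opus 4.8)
The plan is to verify the three ingredients of spectral stability --- stability of the essential spectrum (\hypStabInfty), algebraic simplicity of the zero eigenvalue (\hypTransv), and negativity of the real part of every nonzero eigenvalue --- by using the singular perturbation structure to reduce the two-component spectral problem for $\mathcal{L}=DG_\mu(\phi_\varepsilon)+\partial_{xx}$ (recall $c=0$ by \cref{lem:c_equals_zero}) to the scalar Sturm--Liouville problem governed by the operator $\ell$ of \cref{not_ell}. I would first dispose of the essential spectrum: by \cref{lin_asymot_lv} the asymptotic matrices $DF_\mu(E_1)$ and $DF_\mu(E_2)$ have eigenvalues $-1$ and $-\mu$, so for every real $k$ the eigenvalues of $DF_\mu(E_\pm)-k^2\id$ are $-1-k^2$ and $-\mu-k^2$, all negative; hence the essential spectrum of $\mathcal{L}$ lies in $\{\mathrm{Re}\,\lambda\le-\mu\}$ and \hypStabInfty holds. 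Its rightmost edge sits at $-\mu=-\varepsilon^2$, only $O(\varepsilon^2)$ from the origin, which is consistent with the slow scale $y=\varepsilon x$ of the front and dictates the rescaling below.

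For the point spectrum I would look at $\lambda$ to the right of the essential spectrum and perform a Schur-complement reduction of $\mathcal{L}\varphi=\lambda\varphi$, with $\varphi=(\varphi_T,\varphi_L)$. The transversal equation $\varphi_T''+(1-(2+\mu)\phi_{\varepsilon,T})\varphi_T+\mu\phi_{\varepsilon,L}\varphi_L=\lambda\varphi_T$ has coefficient $1-(2+\mu)\phi_{\varepsilon,T}=-1+O(\varepsilon^2)$, so the transversal operator $\partial_{xx}+a-\lambda$ is boundedly invertible for $\lambda$ near $0$ and one solves $\varphi_T=-(\partial_{xx}+a-\lambda)^{-1}(\mu\phi_{\varepsilon,L}\varphi_L)$; since $\mu=\varepsilon^2$ and the inverse acts almost as multiplication by $-1$ on the slowly varying right-hand side, this yields the slaving $\varphi_T\approx\varepsilon^2\theta\,\varphi_L$. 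Substituting into the longitudinal equation $\varphi_L''+(1-\phi_{\varepsilon,T})\varphi_L-\phi_{\varepsilon,L}\varphi_T=\lambda\varphi_L$ and inserting the expansions of \cref{lem:exist_f_lv} (namely $1-\phi_{\varepsilon,T}=\tfrac{\varepsilon^2}{2}(1-\theta^2)+O(\varepsilon^3)$ and $\phi_{\varepsilon,L}=\theta+O(\varepsilon)$), the $O(1)$ coupling $-\phi_{\varepsilon,L}\varphi_T$ feeds the $O(\varepsilon^2)$ slaving back at leading order as $-\varepsilon^2\theta^2\varphi_L$. After the rescaling $y=\varepsilon x$, $\lambda=\varepsilon^2\Lambda$ and division by $\varepsilon^2$, the coefficient $\tfrac12(1-\theta^2)$ combines with this extra $-\theta^2$ to produce exactly $\tfrac12(1-3\theta^2)$, so the reduced eigenvalue problem is $\ell\Psi=\Lambda\Psi+O(\varepsilon)$. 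The crucial point is that the reduced operator is precisely $\ell$, and not the naive diagonal one.

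The operator $\ell$ is self-adjoint on $L^2(\rr,\rr)$, and since $\ell\theta'=0$ with $\theta'=\tfrac12\,\mathrm{sech}^2(y/2)>0$ nowhere vanishing, the same Sturm--Liouville/oscillation argument as in the proof of \cref{lem:lin_st_toy} shows that $\Lambda=0$ is the principal (largest, simple) eigenvalue of $\ell$, every other spectral point being negative (its essential spectrum is $(-\infty,-1]$, consistent with the rescaled essential-spectrum edge of $\mathcal{L}$). I would then transfer this back: for $\varepsilon$ small, the point spectrum of $\mathcal{L}$ in $\{\mathrm{Re}\,\lambda>-\mu/2\}$, rescaled by $\varepsilon^{-2}$, is $O(\varepsilon)$-close to that of $\ell$, so exactly one eigenvalue has $\mathrm{Re}\,\lambda\ge0$; since translation invariance forces $\lambda=0$ (with eigenfunction $\phi_\varepsilon'$, whose longitudinal part $\approx\varepsilon\theta'$ reduces to the principal eigenfunction) to be an eigenvalue, that eigenvalue must be $0$, it is geometrically simple, and all others have negative real part.

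It remains to upgrade geometric to algebraic simplicity of $\lambda=0$, that is \hypTransv. Reducing $\mathcal{L}^*\psi=0$ in the same way --- now the $O(1)$ coupling sits in the transversal row, giving the slaving $\psi_T\approx-\theta\,\psi_L$ --- leads again to $\ell\Psi=0$, so $\ker(\mathcal{L}^*)$ is one-dimensional with longitudinal part reducing to $\theta'$; the leading contribution to $\langle\psi,\phi_\varepsilon'\rangle_{L^2(\rr,\rr^n)}$ then comes from the longitudinal parts and equals a nonzero multiple of $\norm{\theta'}_{L^2(\rr,\rr)}^2$, so $\phi_\varepsilon'\notin\imm(\mathcal{L})$ and the zero eigenvalue is algebraically simple. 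I expect the main obstacle to be the uniform-in-$\varepsilon$ control of this reduction: because both the essential-spectrum edge and the discrete eigenvalues collapse to $0$ at the rate $\varepsilon^2$, the Schur reduction and the convergence of the rescaled reduced operator to $\ell-\Lambda$ must be carried out with estimates uniform for $\Lambda$ in a fixed region $\{\mathrm{Re}\,\Lambda>-1+\delta\}$, cleanly separating genuine eigenvalues from the essential spectrum; the algebraic (rather than merely spectral) bookkeeping of the zero eigenvalue is the delicate endpoint. As an alternative, spectral stability and transversality of bistable Lotka--Volterra fronts are already known from the results of Kan-on and Fang cited above.
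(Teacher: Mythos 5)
Your proposal is correct in outline, but it takes a genuinely different route from the paper: the paper's entire proof of \cref{lem:lin_stab_lv} is the single sentence you offer as a fallback, namely a citation of the spectral stability and transversality results of Kan-on and Fang \cite{Kan-on_parameterDependencePropSpeedTW_1995,Kan-onFang_stabilityMonotoneTW_1996}, as announced in \cref{subsec:lv_standing_front}. What you propose instead is a self-contained singular-perturbation proof valid for small $\varepsilon$: essential spectrum in $(-\infty,-\varepsilon^2]$ from the triangular asymptotic matrices \cref{lin_asymot_lv}, then a Schur-complement slaving of the transversal component and the rescaling $y=\varepsilon x$, $\lambda=\varepsilon^2\Lambda$, which correctly identifies the reduced operator as $\ell$ of \cref{not_ell} (your observation that the $O(1)$ coupling $-\phi_{\varepsilon,L}\varphi_T$ feeds back the slaved $\varepsilon^2\theta\varphi_L$ to turn $\tfrac12(1-\theta^2)$ into $\tfrac12(1-3\theta^2)$ is exactly right, and is the same mechanism the paper itself exploits for the \emph{adjoint} kernel in the proof of \cref{prop:bar_c_lv}, via the change of unknown \cref{def_eta_lv}, the bounds \cref{bound_eta_lv} and the estimate \cref{ell_psi_L_lv}); finally Sturm--Liouville theory for $\ell$ and a transfer argument, with the nondegenerate pairing $\langle\psi_\varepsilon,\phi_\varepsilon'\rangle\approx\alpha\norm{\theta'}_{L^2(\rr,\rr)}^2\neq0$ giving algebraic simplicity. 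The trade-off is clear: your route buys self-containedness and an explanation of \emph{why} the spectral gap is only $O(\varepsilon^2)$, but at the price of the genuinely delicate work you yourself flag --- uniform-in-$\varepsilon$ resolvent control in a region whose distance to the essential spectrum collapses like $\varepsilon^2$, treatment of possibly complex and possibly order-one or large eigenvalues in $\{\mathrm{Re}\,\lambda\ge0\}$ (your rescaled half-plane $\{\mathrm{Re}\,\Lambda>-1+\delta\}$ is unbounded, so ``$O(\varepsilon)$-closeness of spectra'' there needs a separate resolvent bound for large $|\Lambda|$), and the algebraic bookkeeping of the zero eigenvalue --- none of which is routine to write down. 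The paper's citation, by contrast, is shorter and covers all $\mu>0$, not just the regime $\mu=\varepsilon^2$ small; this is presumably why the author did not redo the spectral analysis even though the reduction machinery was already set up for the computation of $\bar c_\varepsilon$.
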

\begin{proof}
As mentioned in \cref{subsec:lv_standing_front}, this follows from the general stability results proved by Kan-on and Fang in \cite{Kan-on_parameterDependencePropSpeedTW_1995,Kan-onFang_stabilityMonotoneTW_1996}.
\end{proof}
Let us choose the perturbation matrix $\bar{\ddd}$ exactly as in the toy example considerer in \cref{sec:toy}, see definition \vref{def_bar_D_toy}.
For those items, all the hypotheses (H1-4) of \cref{sec:general} are satisfied. Let us denote by
\[
\mathcal{L}_\varepsilon
\quad\mbox{and}\quad
\mathcal{L}^*_\varepsilon
\quad\mbox{and}\quad
\psi_\varepsilon
\quad\mbox{and}\quad
\bar c_\varepsilon
\]
the objects that were denoted by $\mathcal{L}$ and $\mathcal{L}^*$ and $\psi$ and $\bar c$ in \cref{sec:general} (these objects now depend on $\varepsilon$), and let us again denote by $\theta$ the function: $y\mapsto \tanh(y/2)$. The aim of this \namecref{subsec:1rst_ord} is to prove the following proposition.
\begin{proposition}[first order variation of front speed]
\label{prop:bar_c_lv}
The \\
following estimate holds:
\begin{equation}
\label{bar_c_lv}
\bar{c}_\varepsilon \ \sim_{\varepsilon\rightarrow 0^+}\ -\varepsilon\ \frac{\int_0^{+\infty}\theta(y)\theta'(y)\theta''(y)\,dy}{\norm{\theta'}_{L^2(\rr_+,\rr)}^2}
\,.
\end{equation}
\end{proposition}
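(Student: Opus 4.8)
The plan is to evaluate $\bar c_\varepsilon$ starting from the reduced solvency expression \cref{s_cond_rr_plus}, namely $\bar c_\varepsilon = -2\,\langle\psi_\varepsilon,\bar{\ddd}\phi_\varepsilon''\rangle_{L^2(\rr_+,\rr^2)}$, using for $\bar{\ddd}$ its transversal–longitudinal form in \cref{def_bar_D_toy}, which gives $\bar{\ddd}\phi_\varepsilon'' = (-\phi_{\varepsilon,L}'',\,-\phi_{\varepsilon,T}'')$. Thus
\[
\bar c_\varepsilon = 2\int_0^{+\infty}\bigl(\psi_{\varepsilon,T}\,\phi_{\varepsilon,L}'' + \psi_{\varepsilon,L}\,\phi_{\varepsilon,T}''\bigr)\,dx.
\]
The two inputs required are approximations, sharp in the powers of $\varepsilon$, of the second derivatives of the front and of the adjoint kernel $\psi_\varepsilon$. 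For the front I would use \cref{lem:exist_f_lv} together with the slow variable $y=\varepsilon x$ of \cref{not_y_x}, differentiating twice to obtain $\phi_{\varepsilon,L}''(x) = \varepsilon^2\,\theta''(\varepsilon x) + O(\varepsilon^3)$ and $\phi_{\varepsilon,T}''(x) = \varepsilon^4\,(\theta'^2+\theta\theta'')(\varepsilon x) + O(\varepsilon^5)$, so that $\phi_{\varepsilon,L}''$ is of order $\varepsilon^2$ while $\phi_{\varepsilon,T}''$ is only of order $\varepsilon^4$.

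For the adjoint kernel I would carry out a singular-perturbation analysis of $\mathcal{L}_\varepsilon^*\psi_\varepsilon=0$ parallel to the one of \cref{lem:exist_f_lv}. Writing out the adjoint system and inserting the front approximation, the transversal equation reads $\psi_{\varepsilon,T}'' - \psi_{\varepsilon,T} = \phi_{\varepsilon,L}\psi_{\varepsilon,L} + O(\varepsilon^2)$; since the right-hand side varies slowly, the operator $\partial_{xx}-1$ acts at leading order as multiplication by $-1$, so the transversal component is algebraically slaved: $\psi_{\varepsilon,T}\approx-\theta\,\psi_{\varepsilon,L}$ (the homogeneous solutions $e^{\pm x}$ being excluded by decay at $\pm\infty$). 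Substituting this into the longitudinal equation and rescaling to the $y$ variable turns it at leading order into $\ell\,\Psi_L=0$, with $\ell$ the operator \cref{not_ell}. Since $\ker\ell=\spanset(\theta')$, this yields $\psi_{\varepsilon,L}(x)\approx A\,\theta'(\varepsilon x)$ and $\psi_{\varepsilon,T}(x)\approx -A\,\theta(\varepsilon x)\theta'(\varepsilon x)$ for some constant $A$. The constant is fixed by the normalization \cref{norm_cond}: the dominant contribution to $\langle\psi_\varepsilon,\phi_\varepsilon'\rangle$ is $\int_\rr\psi_{\varepsilon,L}\phi_{\varepsilon,L}'\,dx\approx A\,\norm{\theta'}_{L^2(\rr,\rr)}^2$ (after the change of variable $y=\varepsilon x$), whence $A\to\norm{\theta'}_{L^2(\rr,\rr)}^{-2}$.

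The conclusion is then a matter of counting orders. In the integral for $\bar c_\varepsilon$ the cross term $\psi_{\varepsilon,L}\phi_{\varepsilon,T}''$ is $O(\varepsilon^4)$ pointwise and contributes nothing at leading order, whereas $\psi_{\varepsilon,T}\phi_{\varepsilon,L}''\approx -A\,\varepsilon^2\,(\theta\theta'\theta'')(\varepsilon x)$. Substituting $y=\varepsilon x$ and keeping only the leading term gives
\[
\bar c_\varepsilon \ \sim\ -2A\,\varepsilon\int_0^{+\infty}\theta\theta'\theta''\,dy \ \sim\ -\varepsilon\,\frac{\int_0^{+\infty}\theta\theta'\theta''\,dy}{\norm{\theta'}_{L^2(\rr_+,\rr)}^2},
\]
where the factor two is absorbed through $\norm{\theta'}_{L^2(\rr,\rr)}^2 = 2\,\norm{\theta'}_{L^2(\rr_+,\rr)}^2$, valid since $\theta'$ is even.

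The main obstacle is the rigorous control of the remainder in the expansion of $\psi_\varepsilon$: one must establish uniform, exponentially weighted ($\rrr$-type) bounds on the error terms, so that the slaving relation $\psi_{\varepsilon,T}\approx-\theta\,\psi_{\varepsilon,L}$ and the reduction to $\ell$ hold with errors small enough to survive integration over $\rr_+$, and so that the neglected pieces are genuinely of higher order in $\varepsilon$. This is the adjoint counterpart of the Fenichel slow-manifold analysis used for the front in \cref{lem:exist_f_lv}, and constitutes the technical crux; once the approximation of $\psi_\varepsilon$ is secured with the right decay, the remainder of the argument is bookkeeping of orders in $\varepsilon$ and a single change of variable.
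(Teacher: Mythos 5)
Your proposal follows the same route as the paper's own proof: the same solvency starting point (the paper works with the full-line version \cref{solv_cond_lv} rather than the $\rr_+$ version \cref{s_cond_rr_plus}, but these agree by \cref{lem:even_integrands}), the same front expansion from \cref{lem:exist_f_lv}, the same leading-order description of the adjoint kernel (transversal component slaved as $\psi_{\varepsilon,T}\approx-\theta\,\psi_{\varepsilon,L}$, longitudinal component reduced in the slow variable to the kernel of $\ell$, constant fixed by the normalization \cref{norm_cond_lv}), and the same final order counting; all your signs, the evenness argument absorbing the factor $2$, and the limiting constant agree with \cref{bar_c_lv}.

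The gap is the one you flag yourself: the rigorous control of the error in the expansion of $\psi_\varepsilon$, and this is precisely where the paper's proof spends essentially all of its effort, so deferring it leaves the technical core unproved. Two concrete remarks. First, your heuristic for the slaving (``the right-hand side varies slowly, so $\partial_{xx}-1$ acts as multiplication by $-1$'') presupposes that $\psi_{\varepsilon,L}$ is slowly varying, which is not known a priori, since the system for $(\psi_{\varepsilon,T},\psi_{\varepsilon,L})$ is coupled; one needs an argument treating both components simultaneously. Second, the paper closes this gap not by an ``adjoint Fenichel'' slow-manifold analysis, as you suggest, but by an elementary ODE argument: it rewrites $\mathcal{L}^*_\varepsilon\psi=0$ as a first-order system in the four unknowns $\bigl(\psi_{\varepsilon,T},\psi'_{\varepsilon,T},\psi_{\varepsilon,L},\tfrac1\varepsilon\psi'_{\varepsilon,L}\bigr)$ and performs the change of unknown \cref{def_eta_lv}, namely $\eta_{\varepsilon,T}=\psi_{\varepsilon,T}+\theta(\varepsilon x)\psi_{\varepsilon,L}$, after which the linear part consists of a hyperbolic $2\times2$ block acting on $(\eta_{\varepsilon,T},\eta'_{\varepsilon,T})$ together with a zero block, up to $O(\varepsilon)$ coupling terms with coefficients in $\rrr$. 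Hyperbolicity of that block plus decay of all components at $\pm\infty$ then yield the pointwise bounds \cref{bound_eta_lv}, which are the rigorous form of your slaving relation. Feeding them into the longitudinal equation gives the estimate \cref{ell_psi_L_lv}, $\abs{(\ell\Psi_{\varepsilon,L})(y)}\le C\varepsilon\bigl(\abs{\Psi_{\varepsilon,L}(y)}+\abs{\Psi'_{\varepsilon,L}(y)}\bigr)$, and the decomposition \cref{def_chi_L} of $\Psi_{\varepsilon,L}$ into $\alpha\theta'$ plus a component orthogonal to $\ker\ell$, combined with invertibility of $\ell$ transversally to its kernel, produces the bounds \cref{bound_chi_L_chi_L_prime} and finally $\Psi_{\varepsilon,L}=\alpha\theta'+\varepsilon r$ with $r\in\rrr$, i.e.\ errors with the exponential weight needed to survive integration. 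Once these estimates are in place, your bookkeeping is exactly the paper's and the proof is complete; without them, the statement that the neglected pieces are ``genuinely of higher order'' remains an assertion rather than a proof.
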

As a consequence the quantity $\bar{c}$ is positive for every sufficiently small positive quantity $\varepsilon$. In other words, for the Lotka--Volterra competition model in the bistable regime, close to the onset of bistability, an increase of mobility provides an advantage. Since in this case the competition between the two species can be qualified as ``moderately strong'', we recover the interpretation given for the toy example in \cref{sec:toy}, that is the fact that when competition is moderately strong an increase of mobility is advantageous. 
\begin{proof}
We are going to use expression \cref{solv_cond} of $\bar c$, namely, according to the expression of $\bar\ddd$,
\begin{equation}
\label{solv_cond_lv}
\bar c_\varepsilon = \int_{-\infty}^{+\infty} \bigl(\psi_{\varepsilon,T}(x)\phi_{\varepsilon,L}''(x) + \psi_{\varepsilon,L}(x)\phi_{\varepsilon,T}''(x)\bigl)\, dx
\end{equation}
where $x\mapsto\psi_\varepsilon(x)=\bigl(\psi_{\varepsilon,T}(x),\psi_{\varepsilon,L}(x)\bigr)$ (written in the $v$-coordinate system) is the solution of 
$\mathcal{L}^*_\varepsilon\psi=0$ satisfying the normalization condition \cref{norm_cond}, namely:
\begin{equation}
\label{norm_cond_lv}
\langle\psi_\varepsilon,\phi_\varepsilon'\rangle_{L^2(\rr,\rr^2)} = 1
\end{equation}
(here there would be no gain from restricting the integrals in \cref{solv_cond_lv} to $\rr_+$ as in the reduced expression \vref{s_cond_rr_plus}). 

\Cref{lem:exist_f_lv} provides convenient approximations for the functions $\phi_{\varepsilon,T}''(\cdot)$ and $\phi_{\varepsilon,L}''(\cdot)$, thus what remains to be done is to get similar approximations for $\psi_{\varepsilon,T}(\cdot)$ and $\psi_{\varepsilon,L}(\cdot)$. According to expression \cref{syst_lv_v} of the Lotka--Volterra system in the transversal-longitudinal coordinate systems, the operator $\mathcal{L}_\varepsilon$ reads (in the same coordinate system):
\[
\mathcal{L}_\varepsilon
\begin{pmatrix}
\varphi_T \\ \varphi_L
\end{pmatrix}
(x)
= 
\begin{pmatrix}
1-(2+\varepsilon^2)\phi_{\varepsilon,T}(x) & \varepsilon^2\phi_{\varepsilon,L}(x) \\
-\phi_{\varepsilon,L}(x) & 1-\phi_{\varepsilon,T}(x)
\end{pmatrix} 
\begin{pmatrix}
\varphi_T(x) \\ \varphi_L(x)
\end{pmatrix}
+
\begin{pmatrix}
\varphi_T''(x) \\ \varphi_L''(x)
\end{pmatrix}
\,.
\]
\begin{notation}
For the remaining of \cref{sec:lv}, let us use the notation $r(\cdot,\cdot)$ to denote every function in the space $\rrr$ defined in \cref{subsec:not}, or every $2\times2$ or $4\times 4$ matrix having all its coefficients in this space $\rrr$. Thus each of the symbols $r(\cdot, \cdot)$ that appear in the expressions below corresponds to a (different) element of $\rrr$ or matrix of elements of $\rrr$. 
\end{notation}
With this notation and according to the approximation provided by \cref{lem:exist_f_lv}, the expression above reduces to
\[
\mathcal{L}_\varepsilon 
\begin{pmatrix}
\varphi_T \\ \varphi_L
\end{pmatrix}
(x)
= 
\begin{pmatrix}
-1+\varepsilon^2 r(y, \varepsilon) & \varepsilon^2 \theta(y) + \varepsilon^3 r(y, \varepsilon) \\
-\theta(y) + \varepsilon r(y, \varepsilon) & \frac{\varepsilon^2}{2}\bigl(1-\theta(y)^2\bigr)+\varepsilon^3 r(y, \varepsilon)
\end{pmatrix} 
\begin{pmatrix}
\varphi_T (x)\\ \varphi_L(x)
\end{pmatrix}
+
\begin{pmatrix}
\varphi_T''(x) \\ \varphi_L''(x)
\end{pmatrix}
,.
\]
According to this expression the system $\mathcal{L}^*_\varepsilon\psi=0$ reads
\[
\begin{pmatrix}
\psi_{\varepsilon,T}''(x) \\ \psi_{\varepsilon,L}''(x)
\end{pmatrix}
=
\begin{pmatrix}
1 + \varepsilon^2 r(y,\varepsilon) & \theta(y) + \varepsilon r(y,\varepsilon) \\
-\varepsilon^2 \theta(y) + \varepsilon^3 r(y,\varepsilon) & -\frac{\varepsilon^2}{2}\bigl(1-\theta(y)^2\bigr) + \varepsilon^3 r(y,\varepsilon)
\end{pmatrix}
\begin{pmatrix}
\psi_{\varepsilon,T}(x) \\ \psi_{\varepsilon,L} (x)
\end{pmatrix}
\]
or equivalently
\begin{equation}
\label{syst_psi_lv}
\begin{pmatrix}
\psi_{\varepsilon,T}''(x) \\ \frac{1}{\varepsilon^2}\psi_{\varepsilon,L} ''(x)
\end{pmatrix}
= 
\begin{pmatrix}
1 & \theta(y)  \\
- \theta(y)  & -\frac{1}{2}\bigl(1-\theta(y)^2\bigr) 
\end{pmatrix}
\begin{pmatrix}
\psi_{\varepsilon,T}(x) \\ \psi_{\varepsilon,L}(x) 
\end{pmatrix}
+
\varepsilon r(y,\varepsilon)
\begin{pmatrix}
\psi_{\varepsilon,T}(x) \\ \psi_{\varepsilon,L} (x)
\end{pmatrix}
\,.
\end{equation}
Let us introduce the functions $\tilde\psi_{\varepsilon,T}(\cdot)$ and $\tilde\psi_{\varepsilon,L}(\cdot)$ defined by (for every real quantity $x$):
\[
\tilde\psi_{\varepsilon,T}(x) = \psi'_{\varepsilon,T}(x)
\quad\mbox{and}\quad
\tilde\psi_{\varepsilon,L}(x) = \frac{1}{\varepsilon}\psi_{\varepsilon,L}'(x)
\,.
\]
Then the previous system becomes
\[
\begin{pmatrix}
\psi_{\varepsilon,T}' \\ \tilde\psi_{\varepsilon,T}' \\ \psi_{\varepsilon,L}' \\ \tilde\psi_{\varepsilon,L}'
\end{pmatrix}
(x)
=
\begin{pmatrix}
0 & 1 & 0 & 0 \\
1 & 0 & \theta(y) & 0 \\
0 & 0 & 0 & 0 \\
0 & 0 & 0 & 0
\end{pmatrix}
\begin{pmatrix}
\psi_{\varepsilon,T} \\ \tilde\psi_{\varepsilon,T} \\ \psi_{\varepsilon,L} \\ \tilde\psi_{\varepsilon,L}
\end{pmatrix}
(x)
+
\varepsilon r(y,\varepsilon)
\begin{pmatrix}
\psi_{\varepsilon,T} \\ \tilde\psi_{\varepsilon,T} \\ \psi_{\varepsilon,L} \\ \tilde\psi_{\varepsilon,L}
\end{pmatrix}
(x)
\,.
\]
Another change of variables will fire the non-diagonal term in the $4\times4$ matrix above. For this purpose, let us introduce the functions $\eta_{\varepsilon,T}(\cdot)$ and $\tilde\eta_{\varepsilon,T}(\cdot)$ defined by (for every real quantity $x$):
\begin{equation}
\label{def_eta_lv}
\eta_{\varepsilon,T}(x) = \psi_{\varepsilon,T}(x) + \theta(y) \psi_{\varepsilon,L}(x)
\quad\mbox{and}\quad
\tilde\eta_{\varepsilon,T}(x) = \eta_{\varepsilon,T}'(x)
\,.
\end{equation}
Then,
\[
\tilde\eta_{\varepsilon,T}'(x) = \eta_{\varepsilon,T}''(x) = \tilde\psi_{\varepsilon,T}'(x) + \varepsilon^2\theta''(y)\psi_{\varepsilon,L}(x) + 2\varepsilon^2 \theta'(y)\tilde\psi_{\varepsilon,L}(x)+ \varepsilon\theta(y)\tilde\psi_{\varepsilon,L}'(x)
\,,
\]
thus, since according to the second line of the system above the dominant term in the expression of $\tilde\psi_{\varepsilon,T}'(x)$ is $\psi_{\varepsilon,T}(x)+\theta(y)\tilde\psi_{\varepsilon,T}(x)$ and since this dominant term equals $\eta_{\varepsilon,T}(x)$, this system can be rewritten as follows:
\[
\begin{pmatrix}
\eta_{\varepsilon,T}' \\ \tilde\eta_{\varepsilon,T}' \\ \psi_{\varepsilon,L}' \\ \tilde\psi_{\varepsilon,L}'
\end{pmatrix}
(x)
=
\begin{pmatrix}
0 & 1 & 0 & 0 \\
1 & 0 & 0 & 0 \\
0 & 0 & 0 & 0 \\
0 & 0 & 0 & 0
\end{pmatrix}
\begin{pmatrix}
\eta_{\varepsilon,T} \\ \tilde\eta_{\varepsilon,T} \\ \psi_{\varepsilon,L} \\ \tilde\psi_{\varepsilon,L}
\end{pmatrix}
(x)
+
\varepsilon r(y,\varepsilon)
\begin{pmatrix}
\eta_{\varepsilon,T} \\ \tilde\eta_{\varepsilon,T} \\ \psi_{\varepsilon,L} \\ \tilde\psi_{\varepsilon,L}
\end{pmatrix}
(x)
\,.
\]
Since the first $2\times2$ block of the $4\times4$ matrix of this system is hyperbolic, and since the quantities $\eta_{\varepsilon,T}(x)$ and $\tilde\eta_{\varepsilon,T}(x)$ and $\psi_{\varepsilon,L}(x)$ and $\tilde\psi_{\varepsilon,L}(x)$ must approach zero when $t$ approaches plus or minus infinity, this shows that there exist a positive quantity $C$, independent of $\varepsilon$ provided that $\varepsilon$ is sufficiently small, such that, for all $x$ in $\rr$, 
\begin{equation}
\label{bound_eta_lv}
\abs{\eta_{\varepsilon,T}(x)}\le C\varepsilon\bigl(\abs{\psi_{\varepsilon,L}(x)} + \abs{\tilde\psi_{\varepsilon,L}(x)}\bigr)
\quad\mbox{and}\quad
\abs{\tilde\eta_{\varepsilon,T}(x)}\le C\varepsilon\bigl(\abs{\psi_{\varepsilon,L}(x)} + \abs{\tilde\psi_{\varepsilon,L}(x)}\bigr)
\,.
\end{equation}
Let us introduce the function $\Psi_{\varepsilon,L}(\cdot)$ defined by (for every $(x,y)$ in $\rr^2$ with $y=\varepsilon x$):
\[
\Psi_{\varepsilon,L}(y) = \psi_{\varepsilon,L}\Bigl(\frac{y}{\varepsilon}\Bigr) 
\Leftrightarrow
\Psi_{\varepsilon,L}(\varepsilon x) = \psi_{\varepsilon,L}(x)
\,.
\]
With this notation, the second equation of system \cref{syst_psi_lv} becomes:
\[
\Psi_{\varepsilon,L}''(y) = -\theta(y)\psi_{\varepsilon,T}\Bigl(\frac{y}{\varepsilon}\Bigr) - \frac{1}{2}\bigl(1-\theta(y)^2\bigr)\Psi_{\varepsilon,L}(y) + \varepsilon r(y,\varepsilon) \Psi_{\varepsilon,L}(y)
\]
Thus, according to the notation \cref{def_eta_lv}, 
\[
\Psi_{\varepsilon,L}''(y) = \frac{1}{2}\bigl(3\theta(y)^2-1\bigr)\Psi_{\varepsilon,L}(y) -\theta(y)\eta_{\varepsilon,T}\Bigl(\frac{y}{\varepsilon}\Bigr) + \varepsilon r(y,\varepsilon) \Psi_{\varepsilon,L}(y)
\]
and thus, according to inequalities \cref{bound_eta_lv}, and up to increasing the quantity $C$, for all $y$ in $\rr$ (using the notation $\ell$ introduced in \cref{subsec:not}), 
\begin{equation}
\label{ell_psi_L_lv}
\abs{(\ell\Psi_{\varepsilon,L})(y)} = \abs{\Psi_{\varepsilon,L}''(y) - \frac{1}{2}\bigl(3\theta(y)^2-1\bigr) \Psi_{\varepsilon,L}(y)}\le C\varepsilon\bigl(\abs{\Psi_{\varepsilon,L}(y)} + \abs{\Psi_{\varepsilon,L}'(y)}\bigr)
\,.
\end{equation}
Let 
\[
\alpha = \frac{\langle\Psi_{\varepsilon,L},\theta'\rangle_{L^2(\rr,\rr)}}{\norm{\theta'}_{L^2(\rr,\rr)}^2}
\]
and, for all $y$ in $\rr$, let
\begin{equation}
\label{def_chi_L}
\chi_{\varepsilon,L}(y) = \Psi_{\varepsilon,L}(y)-\alpha\theta'(y)
\,.
\end{equation}
By construction, the function $\chi_{\varepsilon,L}(\cdot)$ is orthogonal to $\theta'$, that is to the kernel of $\ell$ and since $\ell\Psi_{\varepsilon,L}$ and $\ell\chi_{\varepsilon,L}$ are equal it follows from \cref{ell_psi_L_lv} that, for every real quantity $y$, 
\[
\abs{(\ell\chi_{\varepsilon,L})(y)}\le C\varepsilon\bigl(\abs{\Psi_{\varepsilon,L}(y)} + \abs{\Psi_{\varepsilon,L}'(y)}\bigr)
\,.
\]
As a consequence, up to increasing the quantity $C$, for every $y$ in $\rr$,
\begin{equation}
\label{bound_chi_L_chi_L_prime}
\abs{\chi_{\varepsilon,L}(y)} \le C\varepsilon\bigl(\abs{\Psi_{\varepsilon,L}(y)} + \abs{\Psi_{\varepsilon,L}'(y)}\bigr)
\quad\mbox{and}\quad
\abs{\chi_{\varepsilon,L}'(y)} \le C\varepsilon\bigl(\abs{\Psi_{\varepsilon,L}(y)} + \abs{\Psi_{\varepsilon,L}'(y)}\bigr)
\,.
\end{equation}
It follows from \cref{def_chi_L,bound_chi_L_chi_L_prime} that, for every real quantity $y$, 
\[
(1-C\varepsilon)\bigl( \abs{\Psi_{\varepsilon,L}(y)} + \abs{\Psi_{\varepsilon,L}'(y)} \bigr) \le \alpha \bigl( \abs{\theta'(y)} + \abs{\theta''(y)} \bigr)
\]
and as a consequence, provided that $\varepsilon$ is sufficiently small,
\begin{equation}
\label{bound_Psi_L_plus_Psi_L_prime}
\abs{\Psi_{\varepsilon,L}(y)} + \abs{\Psi_{\varepsilon,L}'(y)} = r(y,\varepsilon)
\,.
\end{equation}
As a consequence, it follows from \cref{def_chi_L,bound_chi_L_chi_L_prime} that
\begin{equation}
\label{approx_psi_L_psi_T}
\psi_{\varepsilon,L}(x) = \alpha\theta'(\varepsilon x) + \varepsilon r (\varepsilon x, \varepsilon)
\quad\mbox{and}\quad
\psi_{\varepsilon,L}'(x) = \varepsilon \alpha\theta'(\varepsilon x) + \varepsilon^2 r (\varepsilon x, \varepsilon)
\,.
\end{equation}
Besides, it follows from the upper bounds \cref{bound_eta_lv,bound_Psi_L_plus_Psi_L_prime} that
\[
\eta_{\varepsilon,T}(x) = \varepsilon r(\varepsilon x, \varepsilon)
\,.
\]
thus, according to the definition \cref{def_eta_lv} of $\eta_{\varepsilon,T}(\cdot)$, 
\begin{equation}
\label{approx_psi_T}
\psi_{\varepsilon,T}(x) = -\alpha \theta(\varepsilon x) \theta'(\varepsilon x) +  \varepsilon r(\varepsilon x, \varepsilon)
\,.
\end{equation}
The normalization condition \cref{norm_cond_lv} will provide the approximate value of the quantity $\alpha$. This normalization condition reads:
\[
\langle\psi_{\varepsilon,T},\phi_{\varepsilon,T}'\rangle_{L^2(\rr,\rr)} + \langle\psi_{\varepsilon,L}, \phi_{\varepsilon,L}'\rangle_{L^2(\rr,\rr)}=1
\,,
\]
in other words, according to the expressions of $\phi_{\varepsilon,T}(\cdot)$ and $\phi_{\varepsilon,L}(\cdot)$ provided by \vref{lem:exist_f_lv},
\[
\int_{-\infty}^{+\infty} \psi_{\varepsilon,T}(x) \cdot \varepsilon^3 r(\varepsilon x,\varepsilon)\, dx + \int_{-\infty}^{+\infty} \psi_{\varepsilon,L}(x)\cdot\bigl( \varepsilon \theta'(\varepsilon x) + \varepsilon^2 r(\varepsilon x, \varepsilon)\bigr) \, dx = 1
\,.
\]
According to the expression \cref{approx_psi_T}, the first integral of the left-hand side of this last inequality is a $\ooo_{\varepsilon\rightarrow0}(\varepsilon^2)$. Thus it follows from the expression \cref{approx_psi_L_psi_T} for $\psi_{\varepsilon,L}(\cdot)$ that:
\begin{equation}
\label{estim_norm_lv}
\alpha=\frac{1}{\norm{\theta'}_{L^2(\rr,\rr)}^2}+\ooo_{\varepsilon\rightarrow0}(\varepsilon)
\,.
\end{equation}
We are now in position to estimate the value of $\bar{c}_\varepsilon$ given by \cref{solv_cond_lv}. According to \cref{lem:exist_f_lv}, 
\[
\phi_{\varepsilon,T}''(x)=\varepsilon^4 r(\varepsilon x, \varepsilon)
\,,
\]
thus it follows from \cref{solv_cond_lv} and the expression \cref{approx_psi_L_psi_T} of $\psi_{\varepsilon,L}(\cdot)$ that
\[
\bar{c}_\varepsilon = \ooo(\varepsilon^3) + \int_{-\infty}^{+\infty}\, \psi_{\varepsilon,T}(x)\ \phi_{\varepsilon,L}''(x) \, dx  
\,,
\]
thus, according to the expression of $\phi_{\varepsilon,L}(\cdot)$ provided by \vref{lem:exist_f_lv} and the expression \cref{approx_psi_T} of $\psi_{\varepsilon,T}(\cdot)$,
\[
\begin{aligned}
\bar{c}_\varepsilon &= \ooo(\varepsilon^3) + \int_{-\infty}^{+\infty} \bigl( -\alpha \theta(\varepsilon x) \theta'(\varepsilon x) +  \varepsilon r(\varepsilon x, \varepsilon) \bigr)\cdot\bigl(\varepsilon^2 \theta''(\varepsilon x) + \varepsilon^3 r(\varepsilon x,\varepsilon)\bigr) \, dx \\
& = \ooo(\varepsilon^3) - \varepsilon^2\alpha \int_{-\infty}^{+\infty} \theta(\varepsilon x) \theta'(\varepsilon x)\theta''(\varepsilon x) \, dx \\
& = \ooo(\varepsilon^3) - \varepsilon\alpha\int_{-\infty}^{+\infty} \theta(y) \theta'(y)\theta''(y) \, dy
\,.
\end{aligned}
\]
Finally, according to the expression \cref{estim_norm_lv} for $\alpha$,
\[
\bar{c}_\varepsilon \sim_{\varepsilon\to 0} -\frac{\varepsilon}{\norm{\theta'}_{L^2(\rr,\rr)}^2} \int_{-\infty}^{+\infty} \theta(y)\theta'(y)\theta''(y)\,dy
\,,
\]
and restricting the integrals to $\rr_+$ estimate \cref{bar_c_lv} follows. \Cref{prop:bar_c_lv} is proved.
\end{proof}
\section{Appendix}
\label{sec:app}
\subsection{An elementary property of the solutions of a second order conservative equation}
\label{subsec:sol_forced_2nd_order}
Let $f:[0,+\infty)\rightarrow\rr$ denote a continuous function satisfying
\[
f(t)\rightarrow 0 
\quad\mbox{when}\quad 
t\rightarrow +\infty
\quad\mbox{and}\quad
f(\cdot) \mbox{ does not vanish on } (0,+\infty)
\,,
\]
and let us consider the following second order equation:
\begin{equation}
\label{equ_forced_2nd_order}
\ddot u = u + f 
\,.
\end{equation}
The aim of this \namecref{subsec:sol_forced_2nd_order} is to prove the following lemma.
\begin{lemma}[solution homoclinic to $0$]
\label{lem:sol_forced_2nd_order}
There exists a unique solution $t\mapsto u(t)$ of equation \cref{equ_forced_2nd_order} defined on $[0,+\infty)$ such that
\begin{equation}
\label{prop_sol_forced_2nd_order}
u(0) = 0
\quad\mbox{and}\quad
u(t)\rightarrow 0 
\quad\mbox{when}\quad 
t\rightarrow +\infty
\,.
\end{equation}
This solution does not vanish on $(0,+\infty)$, and its sign is opposite to the sign of $f(\cdot)$. 
\end{lemma}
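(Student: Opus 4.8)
The first step is to record that, since $f$ is continuous and does not vanish on $(0,+\infty)$, it keeps a constant sign there; up to replacing $(u,f)$ by $(-u,-f)$ we may therefore assume that $f$ is positive on $(0,+\infty)$, and it then suffices to prove that the solution is negative on $(0,+\infty)$.

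For existence and uniqueness I would work with the two fundamental solutions $e^{t}$ and $e^{-t}$ of the homogeneous equation associated to \cref{equ_forced_2nd_order} (note that $\ddot u = u+f$ reads $\ddot u - u = f$). Variation of parameters, imposing that the coefficient of the growing mode $e^{t}$ tend to $0$ at $+\infty$, singles out a one-parameter family of solutions decaying at $+\infty$: a particular decaying solution plus an arbitrary multiple of $e^{-t}$. Boundedness of $f$ (which follows from continuity together with $f(t)\to 0$) guarantees that the relevant integrals converge and that the constructed solution indeed tends to $0$ at $+\infty$. Imposing the remaining condition $u(0)=0$ then fixes the free multiple of $e^{-t}$ uniquely, yielding both existence and uniqueness of the solution satisfying \cref{prop_sol_forced_2nd_order}. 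Equivalently, one may package this construction into the explicit Green's function $G(t,s) = -\sinh(\min(t,s))\,e^{-\max(t,s)}$ of the boundary value problem on $[0,+\infty)$ with $u(0)=0$ and $u(+\infty)=0$, so that the unique solution is $u(t)=\int_0^{+\infty} G(t,s)\,f(s)\,ds$.

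For the sign I would use a maximum-principle argument, which is both short and robust. Assuming $f>0$ on $(0,+\infty)$, the equation forces $\ddot u > u$ throughout $(0,+\infty)$. At any interior point where $u$ attains a positive local maximum one would have $\ddot u \le 0 < u$, i.e. $\ddot u < u$, a contradiction; hence $u$ has no positive interior local maximum. Since $u(0)=0$ and $u(t)\to 0$ as $t\to +\infty$, any positive value of $u$ on $(0,+\infty)$ would force its (positive, attained) maximum at an interior point, which is excluded; therefore $u\le 0$ on $(0,+\infty)$. Strict negativity follows from the same equation: at an interior zero $t_1$ of $u$ — which, because $u\le 0$, is a local maximum, so $\ddot u(t_1)\le 0$ — one would have $\ddot u(t_1)=u(t_1)+f(t_1)=f(t_1)>0$, a contradiction. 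Thus $u<0$ on $(0,+\infty)$, i.e. $u$ has sign opposite to that of $f$. Alternatively, this is immediate from the formula above, since $G(t,s)<0$ for all $t,s>0$.

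The computation is elementary throughout; the only points requiring a little care are the verification that the decaying solution produced by variation of parameters genuinely decays at $+\infty$ (where boundedness of $f$ is used) and the passage from $u\le 0$ to the strict inequality $u<0$, which the maximum-principle argument settles by evaluating the equation at a putative interior zero.
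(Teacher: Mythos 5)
Your proof is correct, and it differs from the paper's in an instructive way. The paper factorizes the operator: setting $x=u+\dot u$ and $y=-u+\dot u$ turns \cref{equ_forced_2nd_order} into the pair of first-order equations $\dot x = x+f$, $\dot y=-y+f$, which are integrated explicitly to force the formula \cref{expression_bounded_solution}; uniqueness, existence, and the sign are then all read off that single expression (the sign because the kernel is manifestly of the sign opposite to $f$). Your variation-of-parameters/Green's-function construction lands on exactly the same object --- your kernel $G(t,s)=-\sinh\bigl(\min(t,s)\bigr)e^{-\max(t,s)}$ is precisely the kernel appearing in \cref{expression_bounded_solution} --- so for existence and uniqueness the two arguments are the same computation in different packaging, and your observation that the decay of the constructed solution uses $f(t)\to 0$ (not just boundedness) is the same point the paper handles when integrating the $\dot x$ equation from $t$ to $+\infty$. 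Where you genuinely depart from the paper is the sign: your maximum-principle argument (no positive interior maximum since $\ddot u>u$ there; no interior zero since such a zero would be a local maximum of a nonpositive function while $\ddot u = f>0$ there) never uses the explicit solution at all. That buys robustness --- the same two-line argument would give the sign for $\ddot u = a(t)u+f$ with $a>0$, where no closed-form kernel exists --- at the cost of being a separate argument, whereas the paper gets the sign for free once the formula is in hand. Both routes are complete; your write-up correctly isolates the only two delicate points (genuine decay of the particular solution, and the passage from $u\le 0$ to $u<0$).
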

\begin{proof}
Let $t\mapsto u(t)$ denote a solution of equation \cref{equ_forced_2nd_order} on $\rr_+$ and let us consider the functions $x=u+\dot u$ and $y=-u+\dot u$ (thus $u$ equals $(x-y)/2$. Those function satisfy the system
\begin{align}
\dot x &= x + f 
\label{dot_x}\\
\dot y &= -y + f
\label{dot_y}
\end{align}
Equation \cref{dot_y} shows that $y(t)$ approaches $0$ when $t$ approaches $+\infty$, and since the same assertion holds for $u(t)$, it must also hold for $x(t)$. Thus, according to equation \cref{dot_x}, the function $x(\cdot)$ must be given by:
\[
x(t) = -\int_t^{+\infty} e^{t-s} f(s) \, ds
\,.
\]
This provides an explicit expression for $x(0)$, thus also for $y(0)$ since $u(0)$ equals $0$, and finally for $y(t)$ for every nonnegative quantity $t$ according to \cref{dot_y}. It follows that $u(t)$ must be equal to the following expression for every nonnegative time $t$:
\begin{equation}
\label{expression_bounded_solution}
\frac{1}{2}\biggl( e^{-t} \int_0^t f(s) (e^{-s} - e^s) \, ds + (e^{-t} - e^t) \int_t^{+\infty} f(s) e^{-s} \, ds \biggr) 
\,,
\end{equation}
and this proves the uniqueness of a solution satisfying the conclusions \cref{prop_sol_forced_2nd_order} of \cref{lem:sol_forced_2nd_order}. Conversely, expression \cref{expression_bounded_solution} is the expression of a solution of equation \cref{equ_forced_2nd_order} and satisfies \cref{prop_sol_forced_2nd_order}. \Cref{lem:sol_forced_2nd_order} is proved. 
\end{proof}
\subsection{Example of two stable equilibria connected by two fronts travelling in opposite directions}
\label{subsec:ex_2_eq}
Let us consider the following reaction-diffusion equation (a small perturbation of the real Ginzburg-Landau equation): 
\begin{equation}
\label{gl}
A_t=A-\abs{A}^2A+\varepsilon^2 (\bar A+i\Omega A)+A_{xx}
\end{equation}
where the amplitude $A$ is complex, $\varepsilon$ is a small real quantity, and $\Omega$ is a real quantity in $(-1,1)$. This equation has been studied by P.~Coullet and J.-M.~Gilli as a model for nematic liquid crystals submitted to exterior electric and magnetic fields \cite{Coullet_locPattFronts_2002}. In polar coordinates $A=\rho e^{i\theta}$ this equation transforms into the following system:
\begin{equation}
\label{gl_polar_coord}
\left\{
\begin{aligned}
\partial_t\rho &= \rho-\rho^3+\varepsilon^2\rho\cos 2\theta +\partial_{xx}\rho-\rho\partial_{x}\theta^2 \\ 
\partial_t\theta &= \varepsilon^2(-\sin 2\theta +\Omega)+\frac{2\partial_{x}\rho\partial_{x}\theta}{\rho}+\partial_{xx}\theta
\end{aligned}
\right.
\end{equation}
The dynamics of the reaction system (without space) can be easily understood since the expression of $\partial_t\theta$ does not depend on $\rho$ (see \cref{fig:ph_gl_pert}). It has four equilibrium points close to the circle $\rho=1$:
\begin{itemize}
\item $\theta=(1/2)\arcsin\Omega$ and $\theta=\pi+(1/2)\arcsin\Omega$, those are stable, 
\item and $\theta=\pi/2-(1/2)\arcsin\Omega$ and $\theta=3\pi/2-(1/2)\arcsin\Omega$, those are saddles.
\end{itemize}
\begin{figure}[!htbp]
	\centering
    \includegraphics[width=0.8\textwidth]{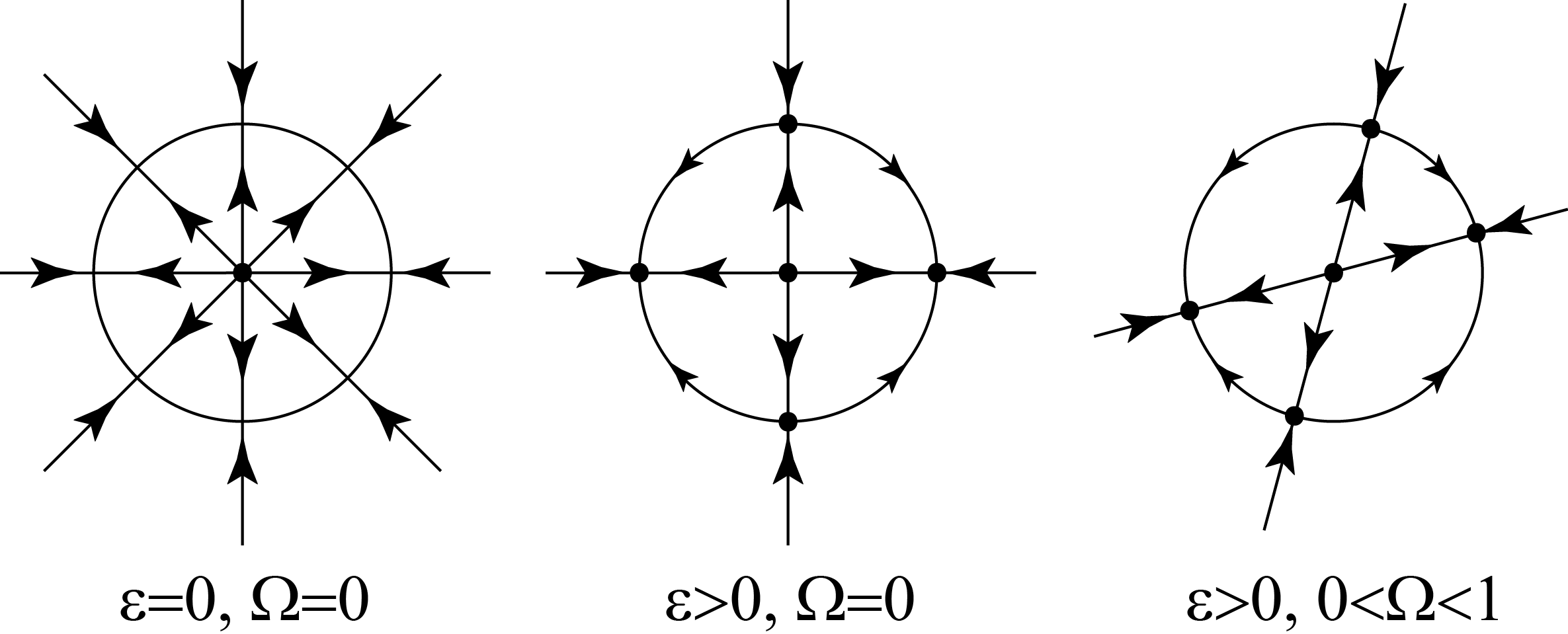}
    \caption{Phase space of the reaction equation.}
    \label{fig:ph_gl_pert}
\end{figure}
Using a perturbation argument, we are going to show that, for $\varepsilon$ close to $0$, the two stable equilibria are connected by two fronts travelling in opposite directions.

Let $c$ denote a real quantity. A front travelling at speed $c$ is a solution of system \cref{gl_polar_coord} of the form
\[
(x,t)\mapsto\bigl(\rho(x-ct),\theta(x-ct)\bigr)
\,.
\]
Replacing this ansatz into system \cref{gl_polar_coord} and performing the change of variables $\rho=1+\varepsilon^2 r$ yields the following system:
\[
\left\{
\begin{aligned}
-cr' &= -2r-3\varepsilon^2 r^2-\varepsilon^4r^3+(1+\varepsilon^2 r)\cos 2\theta+r''-\frac{1}{\varepsilon^2} (1+\varepsilon r)\theta'^2 \\ 
-c\theta' &= \varepsilon^2(-\sin 2\theta+\Omega)+\frac{2\varepsilon^2 r'\theta'}{1+\varepsilon^2 r}+\theta'' 
\end{aligned}
\right.
\]
Let us use the notation:
\[
\tilde r=r'
\quad\mbox{and}\quad
\tilde\theta=\frac{\theta'}{\varepsilon}
\quad\mbox{and}\quad
\tilde c =\frac{c}{\varepsilon}
\,,
\]
and let us consider the quantity $\tilde c$ not as a parameter, but as a (stationary) component of the differential system. The previous system transforms into the following first-order system:
\begin{equation}
\label{syst_fr_gl}
\left\{
\begin{aligned}
r' &= \tilde r \\ 
\tilde r' &= 2r -\cos2\theta+\tilde\theta^2
+\varepsilon\bigl(-\tilde{c}\tilde{r}+ 3\varepsilon r^2 + \varepsilon^3r^3-\varepsilon r\cos2\theta+\varepsilon r\tilde\theta^2 \bigr) \\ 
\theta' &= \varepsilon\tilde\theta \\ \
\tilde\theta' &= \varepsilon(-\tilde c \tilde\theta+\sin 2\theta-\Omega)-\varepsilon^2\frac{2 \tilde r\tilde\theta}{1+\varepsilon^2 r} \\ 
\tilde c' &= 0 
\end{aligned}
\right.
\end{equation}
At the limit $\varepsilon=0$, we get the ``fast'' system: 
\[
\left\{
\begin{aligned}
r' &= \tilde r \\ 
\tilde r' &= 2r -\cos2\theta+\tilde\theta^2 \\ 
\theta' &= 0 \\ 
\tilde\theta' &= 0 \\ 
\tilde c' &= 0 
\end{aligned}
\right.
\]
for which the graph $\Sigma_0$ of the map 
\[
H_0:\rr^3\rightarrow\rr^2, 
\quad
(\theta,\tilde\theta,\tilde c)\mapsto(r,\tilde r)=\Bigl(\frac{\cos2\theta-\tilde\theta^2}{2},0\Bigr)
\]
consists entirely of equilibrium points. At every point of $\Sigma_0$ this fast system is hyperbolic transversely to $\Sigma_0$; indeed, the eigenvalues of its differential are:
\begin{itemize}
\item $-\sqrt{2}$ and $\sqrt{2}$ (transversely to $\Sigma_0$),
\item and zero, with multiplicity three (in the direction of the tangent space to $\Sigma_0$).
\end{itemize}

We may thus apply Fenichel's global center manifold theorem \cite{Fenichel_geomSingPert_1979,Jones_geometricSingularPerturbationTheory_1995,Kaper_introductionGeometricMethodsSingularPerturbation_1999}. Let $D$ denote a compact and simply connected domain of $\rr^3$ with a smooth boundary (the choice of $D$ will be made later). According to this theorem, for $\varepsilon$ sufficiently close to zero, there exists a map $H_\varepsilon: D\rightarrow\rr^2$ that coincides with $H_0$ when $\varepsilon$ equals $0$ and depends smoothly on $\varepsilon$, and such that its graph $\Sigma_\varepsilon$ is locally invariant under the dynamics of~\cref{syst_fr_gl}. The dynamics on this ``slow'' manifold $\Sigma_\varepsilon$ thus reduces to the autonomous system
\begin{equation}
\label{front_gl_slow}
\left\{
\begin{aligned}
\ddot\theta &= -\tilde c\dot\theta+\sin 2\theta -\Omega - \varepsilon\frac{2\tilde{r}\dot\theta}{1+\varepsilon^2 r} \\ 
\dot{\tilde c} &= 0 
\end{aligned}
\right.
\end{equation}
where:
\begin{itemize}
\item derivatives are taken with respect to the ``slow'' time variable $y=\sqrt{\varepsilon}x$, namely:
\[
\dot\theta = \tilde{\theta} = \frac{\theta'}{\varepsilon}
\quad\mbox{and}\quad
\ddot\theta = \frac{\tilde\theta'}{\varepsilon} = \frac{\theta''}{\varepsilon^2}
\,;
\]
\item the quantities $r$ and $\tilde{r}$ are given by: $(r,\tilde{r}) = H_\varepsilon(\theta,\dot\theta,\tilde{c})$.
\end{itemize}
The first equation of system \cref{front_gl_slow} is a small perturbation of the dissipative oscillator 
\begin{equation}
\label{f_gl_slow_lim}
\ddot\theta=-\tilde c\dot\theta - V'(\theta)
\quad\mbox{where}\quad
V(\theta) = \frac{\cos2\theta}{2}+\Omega\,\theta\,,
\end{equation}
see \cref{fig:fr_gl}. 
\begin{figure}[!htbp]
	\centering
    \includegraphics[width=0.6\textwidth]{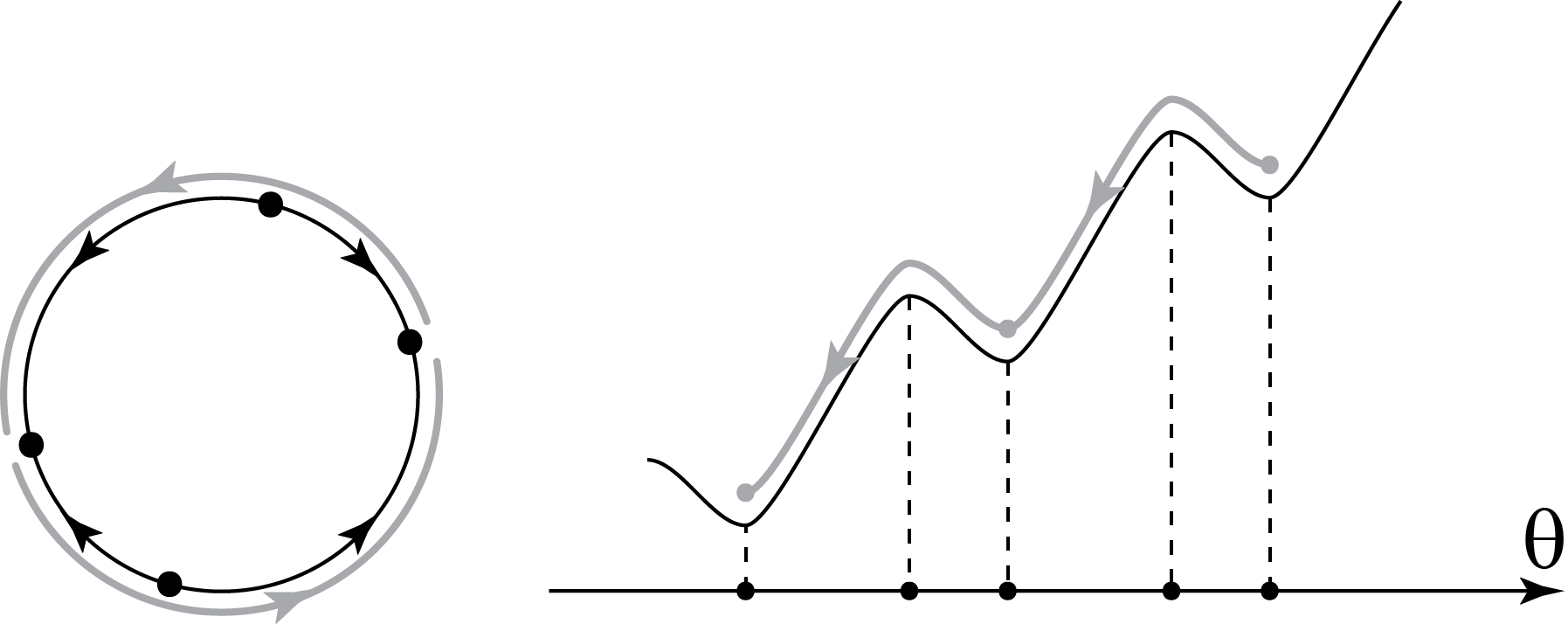}
    \caption{Bistable fronts travelling in opposite directions for equation~\cref{gl} ($\varepsilon>0$, $0<\Omega<1$).}
    \label{fig:fr_gl}
\end{figure}
Since $\Omega$ is in $(-1,1)$, the potential $V$ admits local maxima and minima, with periodicity $\pi$. Let us assume that $\Omega$ is nonzero, and let us consider the two successive local minima 
\[
\frac{\arcsin\Omega}{2}
\quad\mbox{and}\quad
\frac{\arcsin\Omega}{2}+\pi
\]
of the potential $V$. It is well known that there exists a unique nonzero quantity $\tilde c_0$ (depending on $\Omega$) such that, if $\tilde c$ equals $\tilde c_0$, these two minima are connected by a (unique) heteroclinic solution of the dissipative oscillator~\cref{f_gl_slow_lim}. It is also well-known that the corresponding travelling front for the reaction-diffusion equation 
\[
\theta_t=-V'(\theta)+\theta_{xx}
\]
is stable, and therefore robust with respect to small perturbations. In other words, for every quantity $\varepsilon$ sufficiently close to $0$, there exists a unique quantity $\tilde c$ close to $\tilde c_0$ such that the perturbed system \cref{front_gl_slow} admits a heteroclinic solution close to the previous one. To this heteroclinic solution corresponds a heteroclinic solution for the full system~\cref{syst_fr_gl} (provided that the domain $D$ was chosen large enough), and finally a travelling front for the initial equation~\cref{gl}, connecting the corresponding two stable equilibria of this initial equation. The same argument can be repeated for the local minima
\[
\frac{\arcsin\Omega}{2}+\pi
\quad\mbox{and}\quad
\frac{\arcsin\Omega}{2}+2\pi
\]
and proves the existence of the desired fronts travelling in opposite directions for initial equation~\cref{gl}.
\subsection{Isolation and robustness of the travelling front}
\label{subsec:robustness}
This \namecref{subsec:robustness} is devoted to the proof of \vref{prop:robustness}. All the arguments are standard, and we refer for instance to \cite{CoulletRieraTresser_stableStaticLocStructOneDim_2000,Coullet_locPattFronts_2002,
Sandstede_stabilityTW_2002,HomburgSandstede_homocHeteroclinicBifVectFields_2010,GuckenheimerKrauskopf_invManifGlobalBif_2015} for more details.

We keep the hypotheses and notation of \cref{subsec:setup,subsec:transv_assump}, except that the speed of the travelling front $\phi$ under consideration will be denoted by $c_0$ (instead of $c$ in \cref{subsec:setup,subsec:transv_assump}). The reason for this change is that it will be required below to consider a range of values for this speed (and not only the speed of the travelling front $\phi$). 
\subsubsection{Time stability (at both ends of the front) yields spatial hyperbolicity}
Steady states of system \cref{react_diff_trav_frame} --- that is, profiles of waves travelling at velocity $c$ --- are solutions of the system
\begin{equation}
\label{syst_trav_wave_c}
c v_\xi + F(v) + \ddd v_{\xi\xi} = 0 
\ \Longleftrightarrow \ 
v_{\xi\xi} = - \ddd^{-1} \bigl( F(v) + c v_\xi \bigr)
\,;
\end{equation}
the profile $\xi\mapsto \phi(\xi)$ of the travelling front is a solution of this system (which is identical to system \vref{syst_front}) for $c=c_0$. System \cref{syst_trav_wave_c} can be rewritten as the first order system:
\begin{equation}
\label{syst_trav_wave_c_first_order}
\left\{
\begin{aligned}
v' & = w \\
w' & = - \ddd^{-1} \bigl( F(v) + c w \bigr) 
\end{aligned}
\right.
\end{equation}
The following statement follows from hypothesis \hypStabInfty.
\begin{lemma}[system governing the profile of the front is hyperbolic at infinity]
\label{lem:spatially_hyperbolic}
Both \\
equilibria $(E_-,0)$ and $(E_+,0)$ of system \cref{syst_trav_wave_c_first_order} are hyperbolic, and their stable and unstable manifolds are $n$-dimensional.
\end{lemma}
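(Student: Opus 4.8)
The plan is to reduce the whole statement to the spectral content of \hypStabInfty, read off the \emph{dispersion relation} of the spatial linearisation. First I would linearise the first-order system \cref{syst_trav_wave_c_first_order} at the equilibrium $(E_\pm,0)$; the linearisation is governed by the $2n\times 2n$ matrix
\[
M_\pm = \begin{pmatrix} 0 & I_n \\ -\ddd^{-1}DF(E_\pm) & -c\,\ddd^{-1}\end{pmatrix},
\]
and seeking an eigenvector of the form $(v,\lambda v)$ shows that $\lambda$ is an eigenvalue of $M_\pm$ if and only if the matrix pencil $\lambda^2\ddd + c\lambda\, I_n + DF(E_\pm)$ is singular, that is $\det\bigl(\lambda^2\ddd + c\lambda\, I_n + DF(E_\pm)\bigr)=0$.

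Hyperbolicity of the two equilibria then drops out of \hypStabInfty almost immediately. Suppose $\lambda = ik$ with $k$ real were a spatial eigenvalue; substituting into the pencil gives $\bigl(DF(E_\pm)-k^2\ddd\bigr)v = -cik\,v$ for some $v\neq 0$, so the purely imaginary number $-cik$ would be an eigenvalue of $DF(E_\pm)-k^2\ddd$. This contradicts \hypStabInfty, which forces every eigenvalue of that matrix to have strictly negative real part; the case $k=0$ rules out $\lambda=0$ as well, since $DF(E_\pm)$ is then invertible. Hence $M_\pm$ has no eigenvalue on the imaginary axis. Crucially, this argument does not use the particular value of the speed: it applies verbatim for \emph{every} real value of $c$, a fact I will exploit in the dimension count.

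It remains to show that $M_\pm$ has exactly $n$ eigenvalues in each open half-plane, so that the stable and unstable manifolds are $n$-dimensional. The key device is a homotopy $c\rightsquigarrow 0$: by the remark just made, hyperbolicity persists for all intermediate speeds, so the number of eigenvalues with positive (respectively negative) real part stays constant along the deformation. At $c=0$ the characteristic equation collapses to $\det\bigl(\lambda^2 I_n + \ddd^{-1}DF(E_\pm)\bigr)=0$, an \emph{even} polynomial in $\lambda$; thus the spectrum is invariant under $\lambda\mapsto -\lambda$, and since none of the $2n$ eigenvalues is imaginary, they split into pairs $\{\lambda,-\lambda\}$ with one member in each half-plane (the equal multiplicities of $\lambda$ and $-\lambda$ being guaranteed by the evenness). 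This gives exactly $n$ eigenvalues on each side at $c=0$, hence also at the original speed $c$. Invoking the stable manifold theorem, $(E_\pm,0)$ are therefore hyperbolic equilibria of \cref{syst_trav_wave_c_first_order} whose stable and unstable manifolds have dimension $n$. The only delicate point is ensuring that hyperbolicity genuinely holds at every stage of the homotopy, so that the eigenvalue count cannot jump across the imaginary axis; this is precisely what the speed-independent form of the argument in the previous paragraph secures, and it is the step I would be most careful to state cleanly.
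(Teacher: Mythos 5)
Your proof is correct and follows essentially the same route as the paper's: the same reduction to the quadratic pencil $\lambda^2\ddd + c\lambda + DF(E_\pm)$, the same use of \hypStabInfty to exclude purely imaginary spatial eigenvalues for every speed $c$, and the same count of $n$ stable and $n$ unstable eigenvalues at $c=0$ (via the $\lambda\mapsto-\lambda$ symmetry) propagated to general $c$ by continuity. The only cosmetic difference is that you justify the pairing at $c=0$ by the evenness of the characteristic polynomial, where the paper also mentions space reversibility; these are the same observation.
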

\begin{proof}
The linearisation of system \cref{syst_trav_wave_c} at $E_-$ or $E_+$ and the linearisation of system \cref{syst_trav_wave_c_first_order} at $(E_-,0)$ and $(E_+,0)$ read (writing $E_\pm$ for $E_+$ or $E_-$):
\begin{equation}
\label{lin_syst_trav_wave_c_ends}
c v_\xi + DF(E_\pm)v + \ddd v_{\xi\xi} = 0
\ \Longleftrightarrow \ 
\left\{
\begin{aligned}
v' & = w \\
w' &= - \ddd^{-1} \bigl[ DF(E_\pm) v + c w \bigr]
\end{aligned}
\right.
\end{equation}
A complex quantity $\lambda$ is an eigenvalue of this linear system if and only if there exists a pair $(v,w)$ of vectors of $\cc^n$ such that:
\begin{equation}
\label{lin_syst_trav_wave_c_ends_eigenv}
\bigl[DF(E_\pm)v + \lambda^2 \ddd\bigr] v = - \lambda c v
\ \Longleftrightarrow \
\left\{
\begin{aligned}
\lambda v & = w \\
\lambda w &= - \ddd^{-1} \bigl[ DF(E_\pm) v + c w \bigr]
\end{aligned}
\right.
\end{equation}
It follows from hypothesis \hypStabInfty that such an eigenvalue $\lambda$ cannot be purely imaginary. Indeed, if we had $\lambda = ik$ for a real quantity $k$, then the last equation would read
\[
\bigl[DF(E_\pm)v - k^2 \ddd\bigr] v = - i k c v
\,,
\]
a contradiction with hypothesis \hypStabInfty (the spatially homogeneous equilibria $E_-$ and $E_+$ are assumed to be spectrally stable). This proves the hyperbolicity of $(E_\pm,0)$. If $c$ equals $0$ the solutions of the eigenvalue problem \cref{lin_syst_trav_wave_c_ends_eigenv} clearly go by pair of opposite complex numbers (this can also be viewed as a consequence of the space reversibility symmetry), thus in this case the dimensions of the stable and unstable manifolds are equal to $n$. Since the eigenvalues cannot cross the imaginary axis those dimensions remain equal to $n$ by continuity for every real quantity $c$. \Cref{lem:spatially_hyperbolic} is proved.
\end{proof}
\subsubsection{Algebraic multiplicity \texorpdfstring{$1$}{1} of the eigenvalue zero is equivalent to the transversality of the heteroclinic connection defining the profile of the front}
For every real quantity $c$, let 
\begin{itemize}
\item $W^{\textrm{u}}_c(E_-,0)$ denote the unstable manifold of the equilibrium $(E_-,0)$ and 
\item $W^{\textrm{s}}_c(E_+,0)$ denote the stable manifold of the equilibrium $(E_+,0)$
\end{itemize}
for system \cref{syst_trav_wave_c_first_order} (note that the subscript ``$c$'' refers to the speed of the travelling frame, \emph{not} to the concept of center manifold~!). According to \cref{lem:spatially_hyperbolic} above these manifolds $W^{\textrm{u}}_c(E_-,0)$ and $W^{\textrm{s}}_c(E_+,0)$ are $n$-dimensional submanifolds of $\rr^{2n}$. 
Now let us rewrite system \cref{syst_trav_wave_c_first_order} as a $2n+1$-dimensional system, with the speed $c$ as a variable instead of a parameter:
\begin{equation}
\label{syst_trav_wave_c_first_order_2n_plus_one}
\left\{
\begin{aligned}
v' & = w \\
w' & = - \ddd^{-1} \bigl( F(v) + c w \bigr) \\
c' &= 0
\end{aligned}
\right.
\Longleftrightarrow
\left\{
\begin{aligned}
cv' + F(v) + \ddd v'' &=0 \\
c' &= 0
\end{aligned}
\right.
\end{equation}
The flow in $\rr^{2n+1}$ of this system admits:
\begin{itemize}
\item a family of equilibria $\bigl\{(E_-,0,c):c\in\rr\bigr\}$ with an unstable manifold 
\[
\overline{W}^{\textrm{u}}\bigl((E_-,0)\times\rr\bigr) = \bigcup_{c\in\rr} \Bigl( W^{\textrm{u}}_{c}(E_-,0)\times \{c\} \Bigr)
\,;
\]
\item a family of equilibria $\bigl\{(E_+,0,c):c\in\rr\bigr\}$ with a stable manifold 
\[
\overline{W}^{\textrm{s}}\bigl((E_+,0)\times\rr\bigr) = \bigcup_{c\in\rr} \Bigl( W^{\textrm{s}}_{c}(E_+,0)\times \{c\} \Bigr)
\]
\end{itemize} 
(both are $n+1$-dimensional submanifolds of $\rr^{2n+1}$). 
The function
\[
\xi \mapsto \bigl( \phi(\xi), \phi'(\xi),c_0\bigr)
\]
is a solution of system \cref{syst_trav_wave_c_first_order_2n_plus_one} and its trajectory belongs to the intersection
\[
\overline{W}^{\textrm{u}}\bigl((E_-,0)\times\rr\bigr)
\cap
\overline{W}^{\textrm{s}}\bigl((E_+,0)\times\rr\bigr)
\,.
\]
Let us denote by $\Phi$ this trajectory (it is a subset of $\rr^{2n+1}$).
\begin{definition}[transverse travelling front]
The travelling front $\phi$ is said to be \emph{transverse} if the manifolds $\overline{W}^{\textrm{u}}\bigl((E_-,0)\times\rr\bigr)$ and $\overline{W}^{\textrm{s}}\bigl((E_+,0)\times\rr\bigr)$ intersect transversely along the trajectory $\Phi$. 
\end{definition}
\begin{lemma}[multiplicity of eigenvalue zero and transversality]
\label{lem:alg_mult_transv}
The travelling front $\phi$ is transverse if and only if the eigenvalue $0$ of the linearised operator
\[
\mathcal{L}:c_0\partial_\xi+ DF(\phi) + \ddd\partial_{\xi\xi}
\]
has an algebraic multiplicity equal to $1$.  
\end{lemma}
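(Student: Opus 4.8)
The plan is to translate both sides of the equivalence into properties of the variational equation of the extended system \cref{syst_trav_wave_c_first_order_2n_plus_one} along the orbit $\Phi$, and to match them by a dimension count. Writing a tangent vector as $(\delta v,\delta w,\delta c)$, the linearisation of \cref{syst_trav_wave_c_first_order_2n_plus_one} along $\bigl(\phi,\phi',c_0\bigr)$ reads $\delta v'=\delta w$, $\delta c'=0$, and $\delta w'=-\ddd^{-1}\bigl(DF(\phi)\delta v+c_0\delta w+\delta c\,\phi'\bigr)$. Since $\delta c$ is constant along solutions, I would separate the two cases $\delta c=0$ and $\delta c\neq 0$, which correspond respectively to the homogeneous and to an inhomogeneous second-order equation.

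First, the case $\delta c=0$ recovers the unextended variational equation, whose second-order form is exactly $\mathcal{L}\,\delta v=0$ with $\delta w=\delta v'$. By \cref{lem:spatially_hyperbolic} the equilibria $(E_\pm,0)$ are hyperbolic, so every bounded solution decays exponentially at both ends and thus lies in $H^2(\rr,\rr^n)$; evaluating at a point $p$ of $\Phi$, these solutions are in one-to-one correspondence with $T_p W^{\textrm{u}}_{c_0}(E_-,0)\cap T_p W^{\textrm{s}}_{c_0}(E_+,0)$. Hence the geometric multiplicity $g:=\dim\ker(\mathcal{L})$ equals the dimension of this intersection, and $g\ge 1$ because $\phi'\in\ker(\mathcal{L})$.

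Second, for $\delta c\neq 0$ (normalised to $\delta c=1$) the second-order form becomes $\mathcal{L}\,\delta v=-\phi'$ (the inhomogeneous relation one also obtains by differentiating the profile equation in $c$, formally $\mathcal{L}(\partial_c\phi)=-\phi'$). A tangent vector to $\overline{W}^{\textrm{u}}\bigl((E_-,0)\times\rr\bigr)$ is the datum of an extended-variational solution whose $(\delta v,\delta w)$-component decays as $\xi\to-\infty$, since its backward flow converges to the tangent space $\{0\}^{2n}\times\rr$ of the equilibrium family; symmetrically for $\overline{W}^{\textrm{s}}\bigl((E_+,0)\times\rr\bigr)$ as $\xi\to+\infty$. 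Thus a common tangent vector with $\delta c=1$ exists if and only if $\mathcal{L}\,\delta v=-\phi'$ has a solution decaying at both ends, that is, if and only if $\phi'\in\imm(\mathcal{L})$. As any two such common vectors differ by one with $\delta c=0$, these directions add at most one dimension, so
\[
\dim\Bigl(T\overline{W}^{\textrm{u}}\cap T\overline{W}^{\textrm{s}}\Bigr)=g+\begin{cases}1,&\phi'\in\imm(\mathcal{L}),\\ 0,&\phi'\notin\imm(\mathcal{L}).\end{cases}
\]

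Finally I would assemble the equivalence. Transversality means $T\overline{W}^{\textrm{u}}+T\overline{W}^{\textrm{s}}=\rr^{2n+1}$; since both manifolds are $(n+1)$-dimensional, this is equivalent to $\dim\bigl(T\overline{W}^{\textrm{u}}\cap T\overline{W}^{\textrm{s}}\bigr)=1$. Given the displayed formula and $g\ge 1$, this holds exactly when $g=1$ and $\phi'\notin\imm(\mathcal{L})$. This last pair of conditions is precisely algebraic multiplicity $1$: if $g=1$ and $\phi'\notin\imm(\mathcal{L})$ there is no generalised eigenvector, whereas any $w$ with $\mathcal{L}w=\phi'$ would yield $w\in\ker(\mathcal{L}^2)\setminus\ker(\mathcal{L})$ and multiplicity at least $2$. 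I expect the main obstacle to be the step identifying the tangent spaces of the extended invariant manifolds with one-sided decaying solutions of the extended variational equation — in particular justifying that the constant $\delta c$-direction is exactly the tangent to the equilibrium family, so that membership in $T\overline{W}^{\textrm{u}}$ (resp. $T\overline{W}^{\textrm{s}}$) forces decay of the $(\delta v,\delta w)$-component at the appropriate end.
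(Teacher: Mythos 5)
Your proposal is correct and follows essentially the same route as the paper's proof: both linearise the extended $(2n+1)$-dimensional system along the front, identify the tangent spaces of $\overline{W}^{\textrm{u}}\bigl((E_-,0)\times\rr\bigr)$ and $\overline{W}^{\textrm{s}}\bigl((E_+,0)\times\rr\bigr)$ with solutions of the variational equation whose $(v,w)$-component decays at $-\infty$ (resp. $+\infty$), and reduce transversality to the non-existence, for any value of $c$, of a doubly-decaying solution of $\mathcal{L}v=-c\phi'$ outside $\spanset(\phi')$. Your explicit case split $\delta c=0$ versus $\delta c\neq 0$, the resulting dimension formula, and the identification of the pair of conditions ($\dim\ker\mathcal{L}=1$ and $\phi'\notin\imm(\mathcal{L})$) with algebraic multiplicity one merely spell out in detail what the paper's proof states in its final sentence.
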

In other words, hypothesis \hypTransv is equivalent to the transversality of the travelling front $\phi$.
\begin{proof}
A small perturbation
\[
\xi \mapsto \bigl( \phi(\xi), \phi'(\xi),c_0\bigr) + \varepsilon \bigl( v(\xi), w(\xi), c(\xi) \bigr)
\]
is (at first order in $\varepsilon$) a solution of system \cref{syst_trav_wave_c_first_order_2n_plus_one} if $(v,w,c)$ are a solution of the linearised system:
\begin{equation}
\label{syst_trav_wave_c_first_order_2n_plus_one_lin}
\left\{
\begin{aligned}
v' & = w \\
w' & = - \ddd^{-1} \bigl( DF(\phi)v + c_0 w + c\phi'\bigr) \\
c' &= 0
\end{aligned}
\right.
\Longleftrightarrow
\left\{
\begin{aligned}
c_0 v' + DF(\phi)v + \ddd v'' &= - c\phi' \\
c' &= 0
\end{aligned}
\right.
\end{equation}
Observe that the restriction of system \cref{syst_trav_wave_c_first_order_2n_plus_one_lin} to the $2n$ first coordinates reads:
\[
\mathcal{L} v = - c \phi'
\,.
\]
The tangent space in $\rr^{2n+1}$ to the unstable manifold $\overline{W}^{\textrm{u}}\bigl((E_-,0)\times\rr\bigr)$ along $\Phi$ is made of the solutions of system \cref{syst_trav_wave_c_first_order_2n_plus_one_lin} satisfying 
\[
\bigl(v(\xi),w(\xi)\bigr)\rightarrow (0,0) 
\quad\mbox{when}\quad
\xi\rightarrow -\infty
\,,
\]
and the tangent space in $\rr^{2n+1}$ to the stable manifold $\overline{W}^{\textrm{s}}\bigl((E_+,0)\times\rr\bigr)$ along $\Phi$ is made of the solutions of system \cref{syst_trav_wave_c_first_order_2n_plus_one_lin} satisfying 
\[
\bigl(v(\xi),w(\xi)\bigr)\rightarrow (0,0) 
\quad\mbox{when}\quad
\xi\rightarrow +\infty
\,.
\]
According to \cref{lem:spatially_hyperbolic} these two tangent spaces are $n+1$-dimensional; besides their intersection contains (at least) the one-dimensional space $\spanset(\phi',0)$. Thus they intersect transversely if and only if their intersection is actually \emph{reduced} to $\spanset(\phi',0)$. And this is true if and only if there does not exist a quantity $c$ such that system \cref{syst_trav_wave_c_first_order_2n_plus_one_lin} admits a solution $\xi\mapsto v(\xi)$ outside of $\spanset(\phi')$ approaching zero at infinity; in other words, if and only if the eigenvalue $0$ of the operator $\mathcal{L}$ has algebraic multiplicity $1$. \Cref{lem:alg_mult_transv} is proved. 
\end{proof}
Since stable and unstable manifolds depend continuously on the reaction function $F$ and the diffusion matrix $\ddd$ defining system \cref{react_diff}, a transverse travelling front is isolated and robust (according to the definitions stated in \vref{subsec:transv_assump}). As a consequence, \vref{prop:robustness} follows from \cref{lem:alg_mult_transv}. \Cref{prop:robustness} is proved.
\begin{remark}
It can be seen from the proof of \cref{lem:alg_mult_transv} above that the null space of $\mathcal{L}$ is one-dimensional (that is, the eigenvalue zero has geometric multiplicity one) if and only if the intersection of $W^{\textrm{u}}_{c_0}(E_-,0)$ and $W^{\textrm{s}}_{c_0}(E_+,0)$ (in $\rr^{2n}$, without the additional dimension of the speed $c$) is transverse. And in this case, the algebraic multiplicity will also be one if and only if the Melnikov integral defined by the first order dependence of system \cref{syst_trav_wave_c_first_order} with respect to the parameter $c$ is nonzero \cite{GuckenheimerKrauskopf_invManifGlobalBif_2015}.
\end{remark}

It is commonly accepted that hypotheses \hypStabInfty and \hypTransv hold generically for travelling fronts of system \cref{react_diff} (say for a generic reaction function $F$ once the diffusion matrix $\ddd$ is fixed, or for a generic pair $(F,\ddd)$. Genericity of \hypStabInfty is standard since it reduces to the hyperbolicity of the equilibrium points of $F$. Concerning the second hypothesis \hypTransv, a rough justification follows from the equivalence with the transversality of the front. However, to the knowledge of the author, a rigorous justification of this transversality has not been written yet. A joint work in progress with Romain Joly aims at providing such a rigorous justification, however only under the additional hypothesis that the reaction term is the gradient of a potential. 
\subsubsection*{Acknowledgements} 
I am indebted to Régis Ferrière for introducing me to population dynamics, for asking me the question at the origin of this paper, and for many fruitful discussions. I am indebted to Gérard Iooss for fruitful discussions and his kind help, and I am grateful to the referees for their numerous and constructive remarks. In particular, it was a referee suggestion to apply the initial computation to the Lotka--Volterra competition model. 
%
%
\printbibliography 
%
%
\end{document}